\documentclass[11pt,a4paper]{article}
\usepackage[T1]{fontenc}
\usepackage[utf8]{inputenc}
\usepackage{lmodern}
\usepackage[margin=1in]{geometry}
\usepackage{amsmath,amssymb,amsfonts,amsthm}
\usepackage{mathtools,mathrsfs,bm}
\usepackage{graphicx}
\usepackage{placeins} 
\usepackage{microtype}
\usepackage{authblk}
\usepackage[hidelinks]{hyperref}
\usepackage[nameinlink,noabbrev]{cleveref}

\newcommand{\E}{\mathbb E}
\newcommand{\Pp}{\mathbb P}
\newcommand{\R}{\mathbb R}

\newcommand{\eps}{\varepsilon}
\newcommand{\cF}{\mathcal F}

\newcommand{\1}{\mathbf 1}
\newcommand{\normF}[1]{\lVert #1\rVert_{\mathrm F}}

\theoremstyle{plain}
\newtheorem{theorem}{Theorem}
\newtheorem{lemma}[theorem]{Lemma}

\theoremstyle{remark}
\newtheorem{assumption}[theorem]{Assumption}
\newtheorem{remark}[theorem]{Remark}
\newtheorem{example}[theorem]{Example}
\numberwithin{theorem}{section}
\numberwithin{equation}{section}
\crefname{assumption}{Assumption}{Assumptions}
\crefname{remark}{Remark}{Remarks}
\crefname{example}{Example}{Examples}

\title{Exponential Stability of Slow--Fast Stochastic Systems with a Small Delay}
\author[1]{Yudong Wang}
\author[1]{Kele Sun}
\author[1]{Shijie Zhou\thanks{Corresponding author: \texttt{sjzhou14@fudan.edu.cn}.}}
\author[2]{Xuerong Mao}
\affil[1]{Research Institute of Intelligent Complex Systems, Fudan University, Shanghai 200433, China}
\affil[2]{Department of Mathematics and Statistics, University of Strathclyde, Glasgow G1 1XH, UK}
\date{}
\hypersetup{
  pdftitle={Exponential Stability of Slow--Fast Stochastic Systems with a Small Delay},
  pdfauthor={Yudong Wang, Kele Sun, Shijie Zhou, Xuerong Mao}
}

\begin{document}
\maketitle

\begin{abstract}
In this paper, we investigate the exponential stability
of coupled slow--fast stochastic systems with a small delay.
Under an averaged Lyapunov condition, we establish practical
$p$th-moment exponential stability for $0<p<2$ using a
cutoff-function approach to handle singular Lyapunov
derivatives near the origin. Exponential stability in the
$p$th moment and almost surely is further obtained through
two distinct mechanisms: homogeneous dissipation of the
fast dynamics or vanishing sensitivity of the slow
coefficients to the fast state near the origin.
We extend these results to locally Lipschitz systems,
establishing exponential stability with arbitrarily
high probability for bounded sets of initial data,
thereby broadening the applicability of stability
analysis for stochastic systems with delayed feedback.
Numerical examples illustrate our results.
  \end{abstract}


\section{Introduction}

Slow--fast stochastic systems arise in many applications,
including chemical kinetics, neuroscience, and climate
dynamics~\cite{GaltierWainrib2012,MajdaTimofeyevVandenEijnden2001,WuTianRawlingsYin2016}.
Stochastic averaging simplifies such systems by replacing
the fast dynamics with their long-term statistical effects
on the slow component~\cite{Khasminskii1968,NguyenYinZhu2025,PavliotisStuart2008,YinZhang2013}.
Averaging principles have been established in both the
strong~\cite{BaoYinYuan2017,KhasminskiiYin2005,LiuRocknerSunXie2020,RocknerSunXie2019}
and weak senses~\cite{Kushner1984,MaoShao2024,RocknerXie2021}.
However, when the slow diffusion depends on the fast
variable, strong convergence to the averaged SDE generally
fails, making weak averaging more appropriate~\cite{Kifer2003}.

Averaging principles alone are insufficient to establish exponential stability, even with uniform-in-time approximations \cite{Crisan2024,XieZhang2026}, since the approximation errors need not decay over time.
Early studies investigated stability under wide-band noise
and rapidly varying environments
\cite{BlankenshipPapanicolaou1978,Korolyuk1998,KorolyukChabanyuk2002}.
Yin and Zhang~\cite{YinZhang2003} studied stability for
two-time-scale Markov chains using perturbed Lyapunov
functions, while Liu and Krsti\'c~\cite{LiuKrstic2010}
developed infinite-time averaging for locally Lipschitz
systems. Khasminskii~\cite{Khasminskii2012} investigated
stability under rapid Markov switching, and more recently,
Liu et al.~\cite{Liu2025} studied stability for
slow--fast SDEs with fast-dependent diffusion.
Nevertheless, important questions remain concerning
noise-induced stabilisation in slow--fast stochastic systems.

Noise-induced stabilisation is well known
\cite{ArnoldCrauelWihstutz1983,CaraballoGarridoAtienzaReal2003,Hasminskii1980,Scheutzow1985,Scheutzow1993}.
For example, the unstable equation
${\rm d}Z_t=aZ_t\,{\rm d}t+\sigma Z_t\,{\rm d}W_t$
with $\sigma^2>2a>0$ satisfies
$
\E|Z_t|^p=|Z_0|^p
\exp\left\{p\left(a-\frac{1-p}{2}\sigma^2\right)t\right\}.
$
Exponential $p$th-moment
stability holds iff
$0<p<1-2a/\sigma^2<1$.
This illustrates why low-order moments are
essential in noise-induced stabilisation
and motivates stochastic feedback control
~\cite{ApplebyMaoRodkina2008,DengLuoMao2012,Mao1994,MaoYinYuan2007,WuHu2009,ZhaoDengFu2020}.

Small delays are common in feedback systems
~\cite{MeinsmaFuIwasaki1999,VyhlidalMichielsZitek2009},
yet even arbitrarily small delays can destroy stability
~\cite{Datko1988,LogemannRebarberWeiss1996},
including fourth-moment exponential stability in stochastic
systems~\cite{HuLiuDengMao2020}.
This motivates the study of stability robustness under
small delays~\cite{ApplebyMao2005,DongMao2022,LiMao2020,LuHuMao2019,Mao1996,MaoLamHuang2008,MaoShah1997}.
Zong et al.~\cite{ZongLiYinZhang2021} developed a systematic
delay-tolerance framework, mainly for $p\ge2$.
For low-order moments, Guo et al.~\cite{GuoMaoYue2016},
Hu et al.~\cite{HuLiuDengMao2020}, and
Zhao and Deng~\cite{ZhaoDeng2024} established stability
transfer results by comparing delayed and nondelayed
systems in $L^2$.


In this paper, for $0<\eps\le1$, we consider the following stochastic slow--fast system
\begin{small}\begin{equation}\label{eq-1.1}
\begin{cases}
{\rm d}X_t^\eps
=b(X_t^\eps,Y_t^\eps)\,{\rm d}t
+\sigma(X_{t-\tau}^\eps,Y_t^\eps)\,{\rm d}B_t^1,
& t\ge0,\\[1mm]
{\rm d}Y_t^\eps
=\dfrac1\eps f(X_t^\eps,Y_t^\eps)\,{\rm d}t
+\dfrac1{\sqrt\eps}g(X_t^\eps,Y_t^\eps)\,{\rm d}B_t^2,
& t\ge0.
\end{cases}
\end{equation}\end{small}
Here $\tau>0$ is the delay, the initial history of the slow component is
$X_t^\eps=\phi(t)$ for $-\tau\le t\le0$, where $\phi\in C([-\tau,0],\R^d)$, the initial value of the fast component is $Y_0^\eps=y$, and $B^1$ and $B^2$ are independent standard Brownian motions. The coefficient maps are
$
b:\R^d\times\R^m\to\R^d$,
$\sigma:\R^d\times\R^m\to\R^{d\times d_1}
$,
$
f:\R^d\times\R^m\to\R^m$,$
g:\R^d\times\R^m\to\R^{m\times d_2}.
$
Define the operator for the slow component
\begin{small}
\begin{equation}\label{eq-3.34}
\mathscr L_y\varphi(x)
:=\langle b(x,y),\nabla\varphi(x)\rangle
+\frac12\operatorname{tr}\bigl(a(x,y)D^2\varphi(x)\bigr).
\end{equation}
\end{small}

We assume that, for some $p\in(0,2)$, the averaged system~\eqref{eq-3.22} admits a Lyapunov function $V(x)\sim |x|^p$. In view of the above discussion, our main objective is to show that noise can stabilise slow--fast systems and that this stabilising effect can persist in the presence of a small delay. We aim to establish such stability under relatively weak assumptions. However, doing so is not straightforward.

The first difficulty arises from the mutual coupling
of the slow and fast components, whereas the averaged
system involves only the slow variable. Transferring
stability from the averaged system therefore requires
controlling fast fluctuations and their effects on the
slow dynamics. This is particularly challenging for
$0<p<2$, even without delay, since Lyapunov functions
such as $V(x)=|x|^p$ have singular derivatives near
the origin.
Consequently, standard perturbed-Lyapunov methods\cite{Liu2025,LiuKrstic2010} cannot be applied
directly, as the coefficient fluctuations are
weighted by these singular derivatives precisely
where stability must be established.

The diffusion delay brings another difficulty. As mentioned above, existing small-moment delay results~\cite{GuoMaoYue2016,HuLiuDengMao2020,ZhaoDeng2024} start from a stable nondelay reference system and transfer its stability to the delayed system through an $L^2$ comparison of the two solutions. The success of this approach relies on a sufficiently strong moment comparison between the delayed system and the chosen reference process. In our setting, however, this strategy cannot be applied directly with either of the two natural reference systems. If the original system~\eqref{eq-1.1} with $\tau=0$ is used as the reference, its low-order stability must first be established, which is precisely the difficulty described above. If the averaged system~\eqref{eq-3.22} is used instead, then, as discussed earlier, such a strong comparison with the averaged dynamics is generally unavailable when the slow diffusion depends on the fast variable~\cite{Kifer2003}. Consequently, the existing small-moment comparison methods \cite{GuoMaoYue2016,HuLiuDengMao2020,ZhaoDeng2024} cannot be applied directly.

The third difficulty arises from the delay in the slow
diffusion coefficient, whereas the averaged Lyapunov
condition involves only the current state. This introduces
an additional error that must be controlled without
losing the stabilising effect of the noise.
To overcome these difficulties, we introduce a smooth
cutoff $V_\delta(x)=\alpha(|x|/\delta)V(x)$,
where $\alpha$ is defined in Subsection~\ref{sub312}.
The cutoff removes the singularity near the origin,
with its approximation error controlled by a suitable
choice of $\delta$. We control fast fluctuations by
freezing the slow variable and exploiting the mixing
of the frozen fast dynamics, while the delay error is
handled through short-time increment estimates. Together with the averaged Lyapunov condition
\eqref{eq-3.31}, these estimates establish long-time
stability for sufficiently small $\eps$ and $\tau$.
Our approach also applies to the nondelay case $\tau=0$
and to systems whose slow coefficients are independent
of the fast variable.

The main contributions of this paper are as follows.
\begin{enumerate}
\renewcommand{\labelenumi}{(\roman{enumi})}

\item
Unlike existing moment-stability results for general
nonlinear slow--fast stochastic systems, which mainly
focus on $p\geq2$~\cite{CaoWuWu2024Functional,Liu2025, LiuKrstic2010},
we investigate the low-order regime $0<p<2$ for
the mutually coupled system \eqref{eq-1.1}. We show that the averaged dissipation condition
\eqref{eq-3.31} ensures global practical
$p$th-moment exponential stability for sufficiently
small $\eps$ and $\tau$ (Theorem \ref{TH-3.14}). Moreover, we identify two distinct mechanisms
that ensure exponential stability in the
$p$th moment and almost surely: homogeneous
fast dissipation and vanishing sensitivity
of the slow coefficients to fast-state fluctuations near the origin
(Theorem \ref{TH-3.16} and \ref{TH-3.25}).

\item
We develop a cutoff-function approach specifically
tailored to the low-order regime $0<p<2$, overcoming
the singularity of Lyapunov derivatives near the origin.
Unlike conventional perturbed-Lyapunov techniques,
our approach does not require additional smoothness
at the origin. It also applies to systems without fast
components, relaxing the global Lipschitz condition
on the drift to local Lipschitz continuity with linear
growth (see Remark~\ref{1834}), and offers an
advantage over the comparison-based methods
in~\cite{GuoMaoYue2016,HuLiuDengMao2020}.

\item
We relax the global lipchitz assumptions on the
coefficients to local ones, allowing nonlinear systems
with superlinear growth. By combining localized
stability estimates with exit-probability bounds,
we establish exponential stability with arbitrarily
high probability for bounded sets of initial data (Theorem \ref{TH-4.13} and \ref{TH-4.14}).
This extends stochastic delay-stability results
beyond the commonly imposed global growth conditions
~\cite{DongTangMao2022,FeiHuMaoShen2017,
GuoMaoYue2016,HuLiuDengMao2020,
LiFeiMeiFeiMao2023,LiMao2020,ZongLiYinZhang2021}.

\end{enumerate}

The remainder of the paper is organised as follows. Section \ref{SEC-3}  establishes the main stability results under global Lipschitz conditions, while Section \ref{SEC-4} extends these results to locally Lipschitz systems. Section~\ref{SEC-5} presents numerical examples. Section~\ref{SEC-6} presents the concluding remarks.

\textbf{Notations.}
Let $(\Omega,\mathcal F,\Pp)$ be a complete probability space
carrying independent Brownian motions $B^1$ and $B^2$
of dimensions $d_1$ and $d_2$, with the usual augmented
filtration $\{\mathcal F_t\}_{t\ge0}$.
For vectors and matrices, $|\cdot|$, $\|\cdot\|$ and
$\|\cdot\|_{\mathrm F}$ denote the Euclidean, operator and
Frobenius norms, respectively; $A^\top$ and
$\operatorname{tr}(A)$ denote transpose and trace.
Write $a\wedge b=\min\{a,b\}$ and
$a\vee b=\max\{a,b\}$.
For smooth functions, $\nabla f=Df$ and $D^2f$
denote the gradient and Hessian, respectively, while
$C^k(E,F)$ denotes the space of $k$-times continuously
differentiable functions.
For $\tau>0$, set $\mathcal C=C([-\tau,0],\R^d)$ with
$\|\phi\|_\infty=\sup_{-\tau\le t\le0}|\phi(t)|$.
The generic constant $C>0$ may change from line to line;
subscripted constants may depend on the indicated
parameters but are independent of sufficiently small
$\eps$ and $\tau$ unless stated otherwise.

\section{Global Lipschitz Conditions}
\label{SEC-3}

We first establish practical exponential stability
under the assumptions below and derive preliminary
estimates for subsequent analysis. We then obtain
exponential stability through two distinct mechanisms:
homogeneous fast dissipation (Case I) and practical
fast dissipation combined with vanishing fast-state
sensitivity of the slow coefficients (Case II).

\subsection{Practical exponential stability }
\label{SUBSEC-3.1}

\subsubsection{Coefficient Conditions and Preliminary Estimates}\label{2238}

\begin{assumption}\label{AS-3.1}
There exists $L\ge1$ such that, for all $x,z\in\R^d$ and $y,v\in\R^m$,
\begin{small}
\begin{equation}\label{eq-3.1}
|b(x,y)-b(z,v)|+\|\sigma(x,y)-\sigma(z,v)\|_{\rm F}
\le L\bigl(|x-z|+|y-v|\bigr).
\end{equation}
\end{small}
\end{assumption}

Set $r_0:=|b(0,0)|+\|\sigma(0,0)\|_{\rm F}$.
Then Assumption~\ref{AS-3.1} implies
\begin{small}
\[
|b(x,y)|+\|\sigma(x,y)\|_{\rm F}
\le L(|x|+|y|)+r_0.
\]
\end{small}
Moreover, writing $a(x,y):=\sigma(x,y)\sigma(x,y)^\top$, then there exists $C>0$ such that
\begin{small}
\begin{equation}\label{1317}
\begin{gathered}
\|a(x,y)\|\le C(r_0+|x|+|y|)^2,\\
\|a(x,y)-a(z,v)\|
\le C(r_0+|x|+|z|+|y|+|v|)(|x-z|+|y-v|).
\end{gathered}
\end{equation}
\end{small}

We next impose dissipativity and contractivity conditions on the fast equation.

\begin{assumption}\label{AS-3.2}
The coefficients $f$ and $g$ are locally Lipschitz in $y$,
uniformly in $x$. Moreover, there exists $L>0$ such that,
for all $x,z\in\R^d$ and $y\in\R^m$,
\begin{small}
\begin{equation}\label{eq-3.7}
|f(x,y)-f(z,y)|+\|g(x,y)-g(z,y)\|_{\rm F}
\le L|x-z|.
\end{equation}
\end{small}
Furthermore, there exist $c_1,c_2,\lambda_1>0$,
$\vartheta>0$ and $\rho_d\ge0$ such that
\begin{small}
\begin{align}
2\langle y,f(x,y)\rangle+2\|g(x,y)\|_{\rm F}^2
&\le-c_1|y|^2+c_2(\rho_d^2+|x|^2),
\label{eq-3.8}\\
2\langle y-v,f(x,y)-f(x,v)\rangle
+(1+\vartheta)&\|g(x,y)-g(x,v)\|_{\rm F}^2
\le-\lambda_1|y-v|^2.
\label{eq-3.9}
\end{align}
\end{small}
\end{assumption}

Under Assumptions~\ref{AS-3.1} and \ref{AS-3.2},
\eqref{eq-1.1} admits a unique continuous global strong solution (see, e.g., \cite[Theorem~3.1]{WuYin2022}).

For each fixed $x\in\R^d$, consider the frozen fast equation
\begin{small}
\begin{equation}\label{eq-3.11}
{\rm d}Y_t^{x,y}
=f(x,Y_t^{x,y})\,{\rm d}t
+g(x,Y_t^{x,y})\,{\rm d}B_t^2,
\qquad Y_0^{x,y}=y.
\end{equation}
\end{small}
The following estimates and invariant-measure properties follow
from standard arguments; see, e.g.,
\cite[Lemmas~3.6--3.7 and Propositions~3.8--3.9]{LiuRocknerSunXie2020}.

\begin{lemma}\label{LM-3.3}
Under Assumptions~\ref{AS-3.1} and \ref{AS-3.2},
the following statements hold for some constants
$C,\lambda_2,\lambda_3>0$.
\begin{enumerate}
\item[(i)] For $i\in\{2,3\}$ and $t\ge0$,
\begin{small}
\begin{equation}\label{eq-3.12}
\E|Y_t^{x,y}|^i
\le {\rm e}^{-\lambda_2t}|y|^i
+C(\rho_d^i+|x|^i).
\end{equation}
\end{small}

\item[(ii)] For all $x\in\R^d$, $y,v\in\R^m$ and $t\ge0$,
\begin{small}
\begin{equation}\label{eq-3.17}
\E|Y_t^{x,y}-Y_t^{x,v}|^2
\le {\rm e}^{-\lambda_1t}|y-v|^2.
\end{equation}
\end{small}

\item[(iii)] For each $x\in\R^d$, the frozen equation
admits a unique invariant probability measure $\mu_x$ satisfying
\begin{small}
\begin{equation}\label{eq-3.14}
\int_{\R^m}|z|^2\,\mu_x({\rm d}z)
\le C(\rho_d^2+|x|^2).
\end{equation}
\end{small}

\item[(iv)] For all $x\in\R^d$, $y\in\R^m$ and $t\ge0$,
\begin{small}
\begin{align}
\left|\E b(x,Y_t^{x,y})-\bar b(x)\right|
&\le C{\rm e}^{-\lambda_3t}(r_0+\rho_d+|x|+|y|),
\label{eq-3.15}\\
\left\|\E a(x,Y_t^{x,y})-\bar a(x)\right\|
&\le C{\rm e}^{-\lambda_3t}
\bigl((r_0+\rho_d)^2+|x|^2+|y|^2\bigr).
\label{eq-3.16}
\end{align}
\end{small}
The averaged coefficients are defined by
\begin{align*}
\bar b(x)&:=\int_{\R^m}b(x,z)\,\mu_x({\rm d}z),\\
\bar a(x)&:=\int_{\R^m}a(x,z)\,\mu_x({\rm d}z).
\end{align*}
\end{enumerate}
\end{lemma}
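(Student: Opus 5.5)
The plan is to use Itô's formula for the frozen equation \eqref{eq-3.11} together with the dissipativity and contractivity conditions \eqref{eq-3.8}--\eqref{eq-3.9}. Existence of $\mu_x$ then follows by a Cauchy-type argument, and the bounds in (iv) come from combining the contraction (ii) with the Lipschitz bounds of Assumption~\ref{AS-3.1}.

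\textbf{Step (i).} For $i=2$, Itô's formula gives
\[
\frac{\rm d}{{\rm d}t}\E|Y_t|^2=\E\bigl(2\langle Y_t,f\rangle+\|g\|_{\rm F}^2\bigr)\le -c_1\E|Y_t|^2+c_2(\rho_d^2+|x|^2),
\]
and Gronwall yields \eqref{eq-3.12} with $\lambda_2\le c_1$. A localisation by stopping times justifies dropping the martingale term.

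For $i=3$, apply Itô to $|y|^3$. This gives
\[
{\rm d}|Y|^3\le \tfrac32|Y|\bigl(2\langle Y,f\rangle+\|g\|_{\rm F}^2\bigr){\rm d}t+\tfrac32|Y|\,\|g\|_{\rm F}^2{\rm d}t+{\rm d}M,
\]
using $D^2|y|^3\le 3|y|\cdot(\text{const})$. The factor $2\|g\|_{\rm F}^2$ in \eqref{eq-3.8} is exactly what is needed to absorb the extra $\|g\|^2$ term. We obtain $\le -\tfrac32 c_1|Y|^3+C|Y|(\rho_d^2+|x|^2)$, and Young's inequality gives $|Y|(\rho_d^2+|x|^2)\le \eta|Y|^3+C_\eta(\rho_d^3+|x|^3)$. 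Gronwall again finishes the step.

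\textbf{Step (ii).} This is Itô on $|Y^{x,y}_t-Y^{x,v}_t|^2$ combined with \eqref{eq-3.9}, which directly yields $\frac{\rm d}{{\rm d}t}\E|\cdot|^2\le-\lambda_1\E|\cdot|^2$.

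\textbf{Step (iii).} Write $P_t$ for the frozen semigroup. By the Markov property, (ii) and (i),
\[
W_2(\delta_yP_{t+s},\delta_yP_t)^2\le \E|Y_s^{x,y}-y|^2 {\rm e}^{-\lambda_1 t}\le C(|y|^2+\rho_d^2+|x|^2){\rm e}^{-\lambda_1 t}.
\]
So $\delta_yP_t$ is Cauchy in the $W_2$ space, and its limit $\mu_x$ is invariant by the Feller property. The limit is independent of $y$ by (ii), which also gives uniqueness among measures with a finite second moment. A general invariant measure is first shown to have a second moment using (i) and Fatou.

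The moment bound \eqref{eq-3.14} follows from (i) by letting $t\to\infty$ and using lower semicontinuity of the second moment.

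\textbf{Step (iv).} Use invariance, $\bar b(x)=\int \E b(x,Y_t^{x,z})\mu_x({\rm d}z)$. With \eqref{eq-3.1} and (ii),
\[
|\E b(x,Y^{x,y}_t)-\bar b(x)|\le L\int \bigl(\E|Y_t^{x,y}-Y_t^{x,z}|^2\bigr)^{1/2}\mu_x({\rm d}z)\le L{\rm e}^{-\lambda_1t/2}\int|y-z|\,\mu_x({\rm d}z).
\]
By \eqref{eq-3.14}, this is bounded by $C{\rm e}^{-\lambda_1 t/2}(\rho_d+|x|+|y|)$, so \eqref{eq-3.15} holds with $\lambda_3=\lambda_1/2$.

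For $a$, use the second estimate in \eqref{1317}:
\[
\|a(x,Y_t^{x,y})-a(x,Y_t^{x,z})\|\le C(r_0+2|x|+|Y^{x,y}_t|+|Y^{x,z}_t|)\,|Y^{x,y}_t-Y^{x,z}_t|.
\]
Apply Cauchy--Schwarz, and bound the second moments of $Y^{x,y}_t$ and $Y^{x,z}_t$ by (i). Integrate against $\mu_x$ using \eqref{eq-3.14} and Cauchy--Schwarz in $z$. This produces $C{\rm e}^{-\lambda_1t/2}((r_0+\rho_d)^2+|x|^2+|y|^2)$, which is \eqref{eq-3.16}.

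\textbf{Main obstacle.} I expect the main subtlety to be in (iii): rigorously passing to the limit requires the Feller property and a uniqueness argument that covers all invariant measures, not only those with finite moments. I would handle this by citing the standard argument of \cite{LiuRocknerSunXie2020}. The cubic moment computation in (i) is the only delicate algebra; it is needed later, presumably for uniform-integrability estimates of the slow process.
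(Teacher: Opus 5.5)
Your proposal is correct, and it is essentially the standard argument the paper relies on. The paper gives no proof of its own and only cites \cite{LiuRocknerSunXie2020}; that standard argument consists of It\^o estimates for the second and third moments, synchronous coupling for (ii), a $W_2$-Cauchy construction of $\mu_x$, and invariance plus coupling for (iv) with $\lambda_3=\lambda_1/2$, and your observation that the factor $2\|g\|_{\rm F}^2$ in \eqref{eq-3.8} absorbs the Hessian term of $|y|^3$ is exactly right. Your step showing, via (i) and a truncation/Fatou argument, that every invariant measure has a finite second moment already settles the uniqueness issue you flagged, so nothing further is needed.
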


The following estimates follow from standard arguments; see, e.g.,
\cite[Proposition~3.8, Lemma~3.10, and the proof of Lemma~3.11]
{LiuRocknerSunXie2020}.

\begin{lemma}\label{LM-3.4}
Under Assumptions~\ref{AS-3.1}--\ref{AS-3.2},
there exists $C>0$ such that, for all $x,z\in\R^d$,
\begin{small}
\begin{align}
|\bar b(x)|&\le C(r_0+\rho_d+|x|),\qquad
\|\bar a(x)\|\le C\bigl((r_0+\rho_d)^2+|x|^2\bigr),
\label{eq-3.24}\\
|\bar b(x)-\bar b(z)|&\le C|x-z|,\qquad
\|\bar a(x)-\bar a(z)\|
\le C(r_0+\rho_d+|x|+|z|)|x-z|.
\label{eq-3.26}
\end{align}
\end{small}
\end{lemma}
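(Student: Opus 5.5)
The plan is to obtain all four bounds in Lemma~\ref{LM-3.4} from the invariant-measure estimates of Lemma~\ref{LM-3.3} and the Lipschitz bounds \eqref{eq-3.1}, \eqref{1317}. The growth bounds \eqref{eq-3.24} are immediate. By the linear growth of $b$, $|\bar b(x)|\le\int(L(|x|+|z|)+r_0)\,\mu_x({\rm d}z)$. Cauchy--Schwarz and \eqref{eq-3.14} give $\int|z|\,\mu_x({\rm d}z)\le C(\rho_d+|x|)$, and the bound on $\bar b$ follows. Similarly, \eqref{1317} gives $\|\bar a(x)\|\le C\int(r_0+|x|+|z|)^2\mu_x({\rm d}z)\le C((r_0+\rho_d)^2+|x|^2)$.

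For the Lipschitz bounds \eqref{eq-3.26} I would use the representation from \eqref{eq-3.15}--\eqref{eq-3.16}: letting $t\to\infty$, we have $\bar b(x)=\lim_{t\to\infty}\E b(x,Y_t^{x,0})$, and likewise for $\bar a$. Then
\[
|\bar b(x)-\bar b(z)|\le \limsup_{t\to\infty}\Bigl(L|x-z|+L\,\E|Y_t^{x,0}-Y_t^{z,0}|\Bigr),
\]
since the exponential errors in \eqref{eq-3.15} vanish. So the key step is a uniform-in-time bound $\sup_{t\ge0}\E|Y_t^{x,0}-Y_t^{z,0}|^2\le C|x-z|^2$.

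To prove it, apply It\^o's formula to $|U_t|^2$, where $U_t=Y_t^{x,0}-Y_t^{z,0}$. The difference $f(x,Y^x)-f(z,Y^z)$ splits as $[f(x,Y^x)-f(x,Y^z)]+[f(x,Y^z)-f(z,Y^z)]$, and $g$ splits the same way. For the diffusion part, use $\|A+B\|_{\rm F}^2\le(1+\vartheta)\|A\|_{\rm F}^2+(1+\vartheta^{-1})\|B\|_{\rm F}^2$. The monotonicity condition \eqref{eq-3.9} then absorbs the first pieces and contributes $-\lambda_1|U|^2$. The cross pieces are bounded via \eqref{eq-3.7} and Young's inequality by $\frac{\lambda_1}{2}|U|^2+C|x-z|^2$. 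This yields $\frac{\rm d}{{\rm d}t}\E|U_t|^2\le-\frac{\lambda_1}{2}\E|U_t|^2+C|x-z|^2$, and Gronwall's inequality with $U_0=0$ gives the claim.

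The local Lipschitz property of $f,g$ in $y$ only guarantees existence; a localization by stopping times, using the moment bound \eqref{eq-3.12}, justifies taking expectations of the stochastic integral. This justification is the main technical point.

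For $\bar a$, apply \eqref{1317} to get $\|\E a(x,Y^x_t)-\E a(z,Y^z_t)\|\le C\,\E\bigl[(r_0+|x|+|z|+|Y^x_t|+|Y^z_t|)(|x-z|+|U_t|)\bigr]$. Cauchy--Schwarz, the second-moment bound \eqref{eq-3.12} started from $y=0$ (so $\E|Y^x_t|^2\le C(\rho_d^2+|x|^2)$), and the uniform estimate on $\E|U_t|^2$ then give the bound $C(r_0+\rho_d+|x|+|z|)|x-z|$. Letting $t\to\infty$ and using \eqref{eq-3.16} completes the proof.
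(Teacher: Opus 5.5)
Your proposal is correct and follows essentially the same route the paper takes by citation (Liu--R\"ockner--Sun--Xie): the growth bounds come from the invariant-measure moment bound \eqref{eq-3.14}, and the Lipschitz bounds come from a uniform-in-time synchronous-coupling estimate $\sup_{t\ge0}\E|Y_t^{x,0}-Y_t^{z,0}|^2\le C|x-z|^2$, combined with the mixing estimates \eqref{eq-3.15}--\eqref{eq-3.16} to let $t\to\infty$. For the localization step, the cleanest version applies It\^o's formula to ${\rm e}^{\lambda_1 t/2}|U_t|^2$ up to a stopping time and then uses Fatou's lemma, which avoids needing any growth bound on $g$.
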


Let $\bar\sigma(x)\in\R^{d\times d}$
be the unique symmetric nonnegative
definite square root of $\bar a(x)$,
i.e., $\bar\sigma(x)\bar\sigma(x)^\top=\bar a(x)$. Let $\bar B$ be a $d$-dimensional
standard Brownian motion.
Consider the averaged system
\begin{small}
\begin{equation}\label{eq-3.22}
{\rm d}\bar X_t^{s,x}
=\bar b(\bar X_t^{s,x})\,{\rm d}t
+\bar\sigma(\bar X_t^{s,x})\,{\rm d}\bar B_t,
\qquad t\ge s,\quad \bar X_s^{s,x}=x.
\end{equation}
\end{small}
Its generator is given, for $\varphi\in C^2(\R^d,\R)$, by
\begin{small}
\begin{equation}\label{eq-3.23}
\widetilde{\mathscr L}\varphi(x)
:=\langle\bar b(x),\nabla\varphi(x)\rangle
+\frac12\operatorname{tr}\bigl(\bar a(x)D^2\varphi(x)\bigr).
\end{equation}
\end{small}

We impose the following conditions to ensure the exponential stability
 of \eqref{eq-3.22}.

\begin{assumption}\label{AS-3.5}
The averaged coefficients satisfy $\bar b(0)=0$ and $\bar a(0)=0$.
Consequently, Lemma~\ref{LM-3.4} yields
$|\bar b(x)|\le C|x|$ and $\|\bar a(x)\|\le C|x|^2$.
\end{assumption}

\begin{assumption}\label{AS-3.6}
There exist $p\in(0,2)$, $\lambda_6>0$,
constants $c_-,c_+,C_1,C_2,C_3>0$, and a function
$V\in C(\R^d,[0,\infty))\cap C^3(\R^d\setminus\{0\},\R)$
with $V(0)=0$ such that
\begin{small}
\begin{align}
c_-|x|^p&\le V(x)\le c_+|x|^p,
\label{eq-3.29}\\
\|D^jV(x)\|&\le C_j|x|^{p-j},
\quad x\ne0,\quad j=1,2,3,
\label{eq-3.30}\\
\widetilde{\mathscr L}V(x)&\le-\lambda_6V(x),
\quad x\ne0.
\label{eq-3.31}
\end{align}
\end{small}
\end{assumption}

Since $\bar a(0)=0$, we have
$\sigma(0,y)=0$ for $\mu_0$-almost every $y$.
Although the invariant measure $\mu_0$ need not have
full support, the following stronger condition holds
in many relevant settings and allows us to obtain
a sharper estimate of the delay error.

\begin{assumption}\label{AS-3.5a}
The diffusion coefficient satisfies
$\sigma(0,y)=0$ for all $y\in\mathbb R^m$.
\end{assumption}

\begin{remark}\label{RM-3.7}
Assumption~\ref{AS-3.6} implies the $p$th-moment exponential
stability of the averaged system \eqref{eq-3.22}, namely,
$\E|\bar X_t^{s,x}|^p
\le(c_+/c_-){\rm e}^{-\lambda_6(t-s)}|x|^p$
for all $t\ge s$.
\end{remark}

\subsubsection{Smooth Cutoff Function}\label{sub312}

Let $\alpha\in C^\infty([0,\infty))$ satisfy
$0\le\alpha\le1$, $\alpha(r)=0$ for $r\le1$,
and $\alpha(r)=1$ for $r\ge2$.
For $\delta>0$, set $V_\delta(x):=\alpha(|x|/\delta)V(x)$. The next lemma collects some useful properties of $V$ and $V_\delta$.

\begin{lemma}\label{LM-3.8}
There exists $C>0$, independent of $\delta$, such that for all $x,z\in\R^d$,
\begin{small}
\begin{align}
|V(x)-V(z)|\vee|V_\delta(x)-V_\delta(z)|
&\le C|x-z|^p,\quad \text{if }0<p\le1,
\label{eq-3.35}\\
|V(x)-V(z)|\vee|V_\delta(x)-V_\delta(z)|
&\le C(|x|^{p-1}+|z|^{p-1})|x-z|,
\quad \text{if }1<p<2,
\label{eq-3.37}\\
\|D^jV_\delta(x)\|
\le C|x|^{p-j}&\mathbf1_{\{|x|>\delta\}},
\quad j=0,1,2,3,
\label{eq-3.38}\\
0\le V(x)-V_\delta(x)\le C\delta^p,
\|D^2&V_\delta(x)\|\le C\delta^{p-2},
\|D^3V_\delta(x)\|\le C\delta^{p-3}.
\label{eq-3.40}
\end{align}
\end{small}
\end{lemma}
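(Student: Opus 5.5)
I plan to derive every bound from the pointwise estimates \eqref{eq-3.29}--\eqref{eq-3.30} together with the elementary properties of $\alpha$. Fix $M:=\max_{k\le3}\sup_r|\alpha^{(k)}(r)|$. On $\{1\le|x|/\delta\le2\}$ we have $|x|\sim\delta$, so any power $\delta^{-k}$ can be traded for $C|x|^{-k}$.

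\textbf{Derivative bound \eqref{eq-3.38}.} I will use the Leibniz rule,
\[
D^jV_\delta=\sum_{k=0}^j\binom jk D^k\bigl[\alpha(|x|/\delta)\bigr]\otimes D^{j-k}V.
\]
The function $x\mapsto\alpha(|x|/\delta)$ is smooth, since it vanishes for $|x|\le\delta$. Its $k$th derivative ($k\ge1$) is supported in $\delta<|x|<2\delta$ and is a combination of $\delta^{-i}\alpha^{(i)}(|x|/\delta)$ times derivatives of $|x|$ of order bounded by $|x|^{1-\ell}$. On that shell these terms are bounded by $C\delta^{-k}\le C|x|^{-k}$. Combining with $\|D^{j-k}V\|\le C|x|^{p-j+k}$ gives $\|D^jV_\delta(x)\|\le C|x|^{p-j}$, and this vanishes for $|x|\le\delta$. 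The case $j=0$ is immediate from \eqref{eq-3.29}.

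\textbf{Bound \eqref{eq-3.40}.} We have $0\le V-V_\delta=(1-\alpha)V$, which is supported in $|x|\le2\delta$. Hence it is at most $c_+(2\delta)^p$. For the Hessian, \eqref{eq-3.38} gives $\|D^2V_\delta\|\le C|x|^{p-2}\mathbf 1_{\{|x|>\delta\}}$. Since $p-2<0$, this is at most $C\delta^{p-2}$, and the third-derivative bound follows in the same way.

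\textbf{Increment bounds \eqref{eq-3.35} and \eqref{eq-3.37}.} Let $W$ denote $V$ or $V_\delta$. Both are continuous, $C^1$ away from the origin (indeed $V_\delta$ is smooth near it), and satisfy $\|DW(x)\|\le C|x|^{p-1}$ with a constant independent of $\delta$.

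For $1<p<2$, I integrate along the segment $x+s(z-x)$. The integrand $|x+s(z-x)|^{p-1}$ is at most $(|x|+|z|)^{p-1}\le C(|x|^{p-1}+|z|^{p-1})$, because $p-1>0$. If the segment passes through the origin, $W$ is still Lipschitz there (it is $C^1$ with bounded gradient near $0$ when $p>1$), so this causes no problem.

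For $0<p\le1$ the gradient is singular, and this is the main obstacle. I split into two cases. Assume $|x|\ge|z|$.

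\emph{Case 1: $|x-z|\ge|x|/2$.} Then $|W(x)-W(z)|\le c_+(|x|^p+|z|^p)\le 2c_+|x|^p\le C|x-z|^p$.

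\emph{Case 2: $|x-z|<|x|/2$.} Every point on the segment has norm at least $|x|/2$. Therefore
\[
|W(x)-W(z)|\le C(|x|/2)^{p-1}|x-z|\le C|x-z|^{p-1}|x-z|=C|x-z|^p,
\]
where the middle step uses $p-1\le0$ and $|x|/2>|x-z|$.

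All constants depend only on $c_\pm$, $C_j$, $p$, $d$ and $M$, and not on $\delta$. This completes the plan.
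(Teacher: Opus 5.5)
Your proposal is correct and follows essentially the same route as the paper: the product rule with $\|D^k[\alpha(|x|/\delta)]\|\le C\delta^{-k}\le C|x|^{-k}$ on the shell $\delta<|x|<2\delta$ for \eqref{eq-3.38}, the support of $1-\alpha(|x|/\delta)$ and the negativity of $p-2$ and $p-3$ for \eqref{eq-3.40}, and the mean-value theorem with the split $|x|\vee|z|\le 2|x-z|$ versus $|x|\vee|z|>2|x-z|$ for \eqref{eq-3.35}. The only minor difference is that for $1<p<2$ you integrate along the segment directly instead of splitting into cases; this works because the gradient bound $|\xi|^{p-1}$ is increasing in $|\xi|$ and $V$ is $C^1$ at the origin when $p>1$, and it is slightly more direct.
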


\begin{proof}
Set $\alpha_\delta(x)=\alpha(|x|/\delta)$.
Since $V_\delta=0$ on $\{|x|\le\delta\}$ and
$V_\delta=V$ on $\{|x|\ge2\delta\}$, the product rule,
Assumption~\ref{AS-3.6}, and
$\|D^j\alpha_\delta(x)\|\le C\delta^{-j}$
on $\delta<|x|<2\delta$ yield \eqref{eq-3.38}.
The bounds \eqref{eq-3.40} follow immediately.
Finally, \eqref{eq-3.35} and \eqref{eq-3.37}
follow from the mean-value theorem and the growth
and derivative bounds for $V$ and $V_\delta$,
by distinguishing $|x|\vee|z|\le2|x-z|$
and $|x|\vee|z|>2|x-z|$.
\end{proof}

For $\delta>0$, define
\begin{small}
\begin{equation}\label{eq-3.43}
\begin{aligned}
\Psi_\delta(x,y)
&:=\mathscr L_yV_\delta(x)
-\widetilde{\mathscr L}V_\delta(x)\\
&=\langle b(x,y)-\bar b(x),\nabla V_\delta(x)\rangle
+\frac12\operatorname{tr}
\bigl((a(x,y)-\bar a(x))D^2V_\delta(x)\bigr).
\end{aligned}
\end{equation}
\end{small}
Set
\begin{small}
\begin{equation}\label{eq-3.44}
\Lambda_\delta(y)
:=1+\delta^{-2}(r_0+\rho_d+|y|)^2.
\end{equation}
\end{small}

\begin{lemma}\label{LM-3.9}
There exists $C>0$, independent of $\delta$, such that
\begin{small}
\begin{align}
\widetilde{\mathscr L}V_\delta(x)
&\le-\lambda_6V_\delta(x)+C\delta^p, \forall x\in\R^d,
\label{eq-3.45}\\
|\Psi_\delta(x,y)-\Psi_\delta(z,y)|
&\le C\Lambda_\delta(y)|x-z|^p, \forall x,z\in\R^d, y\in\R^m,
\quad\text{if }0<p\le1.
\label{eq-3.46}
\end{align}
\end{small}
\end{lemma}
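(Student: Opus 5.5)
For \eqref{eq-3.45}, I split $\R^d$ into three regions according to the cutoff. On $\{|x|\ge2\delta\}$ we have $V_\delta=V$ and all derivatives agree, so \eqref{eq-3.31} gives $\widetilde{\mathscr L}V_\delta(x)=\widetilde{\mathscr L}V(x)\le-\lambda_6V(x)=-\lambda_6V_\delta(x)$. On $\{|x|\le\delta\}$, $V_\delta\equiv0$ in a neighbourhood of any interior point, so $\widetilde{\mathscr L}V_\delta=0=-\lambda_6V_\delta$; on the sphere $|x|=\delta$ the derivatives vanish by smoothness of $\alpha$ (which is flat at $r=1$). On the annulus $\delta<|x|<2\delta$ I bound the operator directly. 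Assumption~\ref{AS-3.5} gives $|\bar b(x)|\le C|x|$ and $\|\bar a(x)\|\le C|x|^2$, and \eqref{eq-3.38} gives $\|D^jV_\delta(x)\|\le C|x|^{p-j}$. Hence $|\widetilde{\mathscr L}V_\delta(x)|\le C|x|^p\le C(2\delta)^p$. Since $0\le V_\delta\le c_+|x|^p\le C\delta^p$ there, also $\lambda_6V_\delta(x)\le C\delta^p$. Together these give $\widetilde{\mathscr L}V_\delta(x)\le-\lambda_6V_\delta(x)+C\delta^p$ with $C$ independent of $\delta$.

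For \eqref{eq-3.46}, with $0<p\le1$, I write $\Psi_\delta(x,y)=F(x)+G(x)$, where
$F(x)=\langle b(x,y)-\bar b(x),\nabla V_\delta(x)\rangle$ and $G(x)=\tfrac12\operatorname{tr}((a(x,y)-\bar a(x))D^2V_\delta(x))$, with $y$ fixed. I then split according to the size of $|x-z|$ relative to $\delta$.

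\emph{Case $|x-z|\ge\delta/2$.} I bound each term separately and use $1\le(2|x-z|/\delta)^p$. By Assumption~\ref{AS-3.1} and Lemma~\ref{LM-3.4}, $|b(x,y)-\bar b(x)|\le C(r_0+\rho_d+|x|+|y|)$, and $\|a(x,y)-\bar a(x)\|\le C(r_0+\rho_d+|x|+|y|)^2$. Combining these with $\|D^jV_\delta(x)\|\le C|x|^{p-j}\mathbf 1_{\{|x|>\delta\}}$, the $|x|$-parts contribute $C|x|^p$, and the $(r_0+\rho_d+|y|)$-parts contribute at most $C\delta^{p-1}(r_0+\rho_d+|y|)+C\delta^{p-2}(r_0+\rho_d+|y|)^2$. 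Using $2ab\le a^2+b^2$, this is bounded by $C\delta^p\Lambda_\delta(y)$. The $|x|^p$ term is a problem for large $|x|$, since it is not small. To handle it, I keep the $x$-dependence paired: $b(x,y)-\bar b(x)=(b(x,y)-b(x,0))+(b(x,0)-\bar b(x))$. The difference $(b(x,0)-\bar b(x))-(b(z,0)-\bar b(z))$ is Lipschitz in $x$, and $|b(x,y)-b(x,0)|\le L|y|$. So the problematic pieces are Lipschitz or bounded by $|y|$, and the $|x|^p$ term is avoided. I expect this bookkeeping to be the main obstacle.

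\emph{Case $|x-z|<\delta/2$.} I use the mean-value theorem along the segment. Differentiating $F$ requires $D^2V_\delta$ and the Lipschitz constants of $b,\bar b$. Differentiating $G$ requires $D^3V_\delta$ and the local Lipschitz bound on $a-\bar a$ from \eqref{1317} and \eqref{eq-3.26}. On the segment, $|w|$ is comparable to $|x|$ when $|x|\ge\delta$. The key bound is $|\nabla\Psi_\delta(w,y)|\le C\Lambda_\delta(y)\delta^{p-1}$, obtained by the same splitting of the coefficients into $y$-parts (weighted by $\delta^{p-2}$ and $\delta^{p-3}$) and $x$-homogeneous parts (weighted by $|w|^{p-1}\le\delta^{p-1}$ since $p\le1$). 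Then $|x-z|\,\delta^{p-1}\le C|x-z|^p$ because $|x-z|<\delta$, which gives \eqref{eq-3.46}.
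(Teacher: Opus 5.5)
\textbf{Where the gap is.} Your proof of \eqref{eq-3.45} is correct and matches the paper's. Your case $|x-z|<\delta/2$ for \eqref{eq-3.46} is also correct: for $p\le1$, $\Psi_\delta(\cdot,y)$ is globally Lipschitz with constant $C\Lambda_\delta(y)\delta^{p-1}$, and $\delta^{p-1}|x-z|\le|x-z|^p$ when $|x-z|<\delta$.

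The gap is in the case $|x-z|\ge\delta/2$, in the regime $\delta/2\le|x-z|\ll R:=|x|\vee|z|$. You flag this as the main obstacle but do not resolve it.
\begin{itemize}
\item There, the separate bounds give $C\Lambda_\delta(y)(|x|^p+|z|^p)$, which is far larger than $|x-z|^p$.
\item The Lipschitz bound from your second case gives $C\Lambda_\delta(y)\delta^{p-1}|x-z|$. This is too large by the factor $(|x-z|/\delta)^{1-p}$ when $p<1$.
\item Your remedy $b(x,y)-\bar b(x)=(b(x,y)-b(x,0))+(b(x,0)-\bar b(x))$ only separates the $y$-dependence. The term $b(x,0)-\bar b(x)=\int(b(x,0)-b(x,z'))\,\mu_x({\rm d}z')$ still grows like $\rho_d+|x|$. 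So $\langle b(x,0)-\bar b(x),\nabla V_\delta(x)\rangle$ is still of order $|x|^p$.
\item Pairing the $x$- and $z$-terms lets Lipschitz continuity control $\langle(b(x,0)-\bar b(x))-(b(z,0)-\bar b(z)),\nabla V_\delta(x)\rangle$. It leaves $\langle b(z,0)-\bar b(z),\nabla V_\delta(x)-\nabla V_\delta(z)\rangle$ and its $D^2V_\delta$ analogue. These are of size $|z|\,|\nabla V_\delta(x)-\nabla V_\delta(z)|$, and Lipschitz continuity of the coefficients says nothing about them.
\end{itemize}

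\textbf{The missing idea.} You need a scale-invariant case split: compare $|x-z|$ with $R$, not with $\delta$.
\begin{itemize}
\item If $R\le2|x-z|$, use $|\Psi_\delta(w,y)|\le C\Lambda_\delta(y)|w|^p$ at each of the two points. This holds because $\Psi_\delta(w,y)=0$ unless $|w|>\delta$, so $(r_0+\rho_d+|y|)^2|w|^{p-2}\le\delta^{-2}(r_0+\rho_d+|y|)^2|w|^p$. It gives the bound $C\Lambda_\delta(y)R^p\le C\Lambda_\delta(y)|x-z|^p$.
\item If $R>2|x-z|$, every $w\in[x,z]$ satisfies $|w|>R/2$. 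Keep the mean-value estimate on $\{|w|>\delta\}$ at the scale of $|w|$: it is $C\Lambda_\delta(y)|w|^{p-1}\le C\Lambda_\delta(y)R^{p-1}$, not merely $C\Lambda_\delta(y)\delta^{p-1}$. Hence $|\Psi_\delta(x,y)-\Psi_\delta(z,y)|\le C\Lambda_\delta(y)R^{p-1}|x-z|\le C\Lambda_\delta(y)|x-z|^p$, since $R>|x-z|$ and $p\le1$.
\end{itemize}
This is the paper's argument, and it makes your split at $\delta/2$ unnecessary.
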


\begin{proof}
Write $M:=r_0+\rho_d+|y|$.
By Assumptions~\ref{AS-3.1}--\ref{AS-3.2}
and Lemma~\ref{LM-3.4},
\begin{small}$$
|b(x,y)-\bar b(x)|\le C(M+|x|),\quad
\|a(x,y)-\bar a(x)\|\le C(M+|x|)^2,
$$\end{small}
and their differences at $x,z$ are bounded by
$C|x-z|$ and $C(M+|x|+|z|)|x-z|$, respectively.
Together with Lemma~\ref{LM-3.8}, this gives
\begin{small}
\begin{equation}\label{eq-3.48}
|\Psi_\delta(x,y)|
\le C|x|^{p-2}(M+|x|)^2
\le C\Lambda_\delta(y)|x|^p,
\quad |x|>\delta,
\end{equation}
\end{small}
while $\Psi_\delta(x,y)=0$ for $|x|\le\delta$. For $0<p\le1$, set $d_0:=|x-z|$ and
$R:=|x|\vee|z|$. The case $d_0=0$ is trivial.
If $R\le2d_0$, \eqref{eq-3.48} yields
\begin{small}$$
|\Psi_\delta(x,y)-\Psi_\delta(z,y)|
\le C\Lambda_\delta(y)d_0^p.
$$\end{small}
Suppose $R>2d_0$. If $R\le\delta$, both terms vanish.
Otherwise $R>\delta$, and every $\xi\in[x,z]$
satisfies $|\xi|>R/2$.
Using the coefficient difference bounds,
the mean-value theorem, and \eqref{eq-3.38},
we obtain
\begin{small}
\[
\begin{aligned}
|\Psi_\delta(x,y)-\Psi_\delta(z,y)|
&\le Cd_0\bigl(R^{p-1}
+(M+R)R^{p-2}
+(M+R)^2R^{p-3}\bigr)\\
&\le C\Lambda_\delta(y)d_0R^{p-1}
\le C\Lambda_\delta(y)d_0^p.
\end{aligned}
\]
\end{small}
This proves \eqref{eq-3.46}. For $|x|\ge2\delta$, \eqref{eq-3.45}
follows from Assumption~\ref{AS-3.6},
while for $|x|\le\delta$ it is immediate.
On $\delta<|x|<2\delta$,
Assumption~\ref{AS-3.5} and Lemma~\ref{LM-3.8}
give
$|\widetilde{\mathscr L}V_\delta(x)|
+V_\delta(x)\le C\delta^p$,
which proves \eqref{eq-3.45}.
\end{proof}

\subsubsection{Finite-time estimates using the cutoff Lyapunov function}

Fix $T>0$ and assume $\tau\in(0,\min\{1,T/2\})$.
We first establish some finite-time estimates for \eqref{eq-1.1}.
For $s\ge0$, set
\begin{small}
\begin{equation}\label{def}
H_s:=\sup_{s-\tau\le v\le s}|X_v^\eps|,\quad
U_s:=\sup_{s-\tau\le v\le s}V(X_v^\eps), \quad
R_s^3:=H_s^3+|Y_s^\eps|^3+r_0^3+\rho_d^3.
\end{equation}
\end{small}

\begin{lemma}\label{LM-3.10}
Under Assumptions~\ref{AS-3.1}--\ref{AS-3.2},
there exists $\lambda_5>0$ such that
\begin{small}
\begin{align}
&\E\sup_{-\tau\le v\le u}|X_v^\eps|^3
\le C_T(H_0^3+\eps|Y_0^\eps|^3+r_0^3+\rho_d^3),
0\le u\le T, \label{eq-3.57}\\
&\E|Y_u^\eps|^3
\le C{\rm e}^{-\lambda_5u/\eps}|Y_0^\eps|^3
+C_T(H_0^3+\eps|Y_0^\eps|^3+r_0^3+\rho_d^3),
0\le u\le T, \label{eq-3.58}\\
&\E\sup_{r\le v\le r+u}|X_v^\eps-X_r^\eps|^3
\le C_Tu^{3/2}R_0^3,
0\le r\le r+u\le T, \label{eq-3.59}\\
&\E|Y_T^\eps|^p
\le C_TU_0+d_\eps|Y_0^\eps|^p+C_T(r_0^p+\rho_d^p),
d_\eps:=C_T\eps^{p/3}
+C{\rm e}^{-\lambda_5pT/(3\eps)}. \label{eq-3.60}
\end{align}
\end{small}
\end{lemma}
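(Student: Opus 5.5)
We prove the four estimates in order, since each feeds the next. Throughout we use the linear growth of $b,\sigma$ from Assumption~\ref{AS-3.1} and the dissipativity \eqref{eq-3.8} of the fast equation.

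\textbf{Step 1: a coupled estimate for $X$ and $Y$.}
Apply Itô's formula to $|Y_u^\eps|^3$. Using \eqref{eq-3.8}, together with the bound $|y|\,\|g\|_{\rm F}^2\le$ the same quadratic form, the drift of ${\rm d}|Y^\eps|^3$ is at most
\[
\frac{1}{\eps}\Bigl(-\tfrac{3c_1}{4}|Y^\eps|^3+C|Y^\eps|\bigl(\rho_d^2+|X^\eps|^2\bigr)\Bigr)
\le \frac{1}{\eps}\Bigl(-\lambda_5'|Y^\eps|^3+C\bigl(\rho_d^3+|X^\eps|^3\bigr)\Bigr),
\]
by Young's inequality. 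The martingale term can be localised and then removed by taking expectations. Multiplying by ${\rm e}^{\lambda_5 u/\eps}$ and integrating gives
\[
\E|Y_u^\eps|^3\le {\rm e}^{-\lambda_5u/\eps}|Y_0^\eps|^3+C\sup_{v\le u}\E|X_v^\eps|^3+C\rho_d^3 .
\]

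For the slow component, apply the Burkholder--Davis--Gundy and Hölder inequalities. Because the diffusion is delayed, $|X_{v-\tau}^\eps|$ is controlled by $H_0$ on $v\le\tau$ and by $\sup_{w\le v}|X_w^\eps|$ afterwards. This yields
\[
\E\sup_{v\le u}|X_v^\eps|^3\le C_T\Bigl(H_0^3+r_0^3+\int_0^u\bigl(\E\sup_{w\le v}|X_w^\eps|^3+\E|Y_v^\eps|^3\bigr)\,{\rm d}v\Bigr).
\]
Now substitute the $Y$ bound into the time integral. The key observation is that
\[
\int_0^u {\rm e}^{-\lambda_5v/\eps}\,{\rm d}v\le \frac{\eps}{\lambda_5},
\]
so the initial fast state enters only through $\eps|Y_0^\eps|^3$. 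Gronwall's inequality then gives \eqref{eq-3.57}, and inserting \eqref{eq-3.57} back into the $Y$ estimate gives \eqref{eq-3.58}.

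\textbf{Step 2: increments, \eqref{eq-3.59}.}
Bound the drift contribution by Hölder's inequality, which gives $u^{2}\int_r^{r+u}\E|b|^3$. Bound the diffusion contribution by BDG, which gives $u^{1/2}\int_r^{r+u}\E\|\sigma\|_{\rm F}^3$. Then insert \eqref{eq-3.57} and \eqref{eq-3.58}. Two simplifications keep the constant in terms of $R_0^3$:
\begin{itemize}
\item since $\eps\le1$, the term $\eps|Y_0^\eps|^3$ is at most $|Y_0^\eps|^3$;
\item the decaying term ${\rm e}^{-\lambda_5 v/\eps}|Y_0^\eps|^3$ is at most $|Y_0^\eps|^3$.
\end{itemize}
Using $u\le T$, this gives $\E\sup_{r\le v\le r+u}|X_v^\eps-X_r^\eps|^3\le C_Tu^{3/2}R_0^3$.

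\textbf{Step 3: the $p$th moment of $Y_T$, \eqref{eq-3.60}.}
By Jensen's inequality, $\E|Y_T^\eps|^p\le(\E|Y_T^\eps|^3)^{p/3}$. Apply \eqref{eq-3.58} at $u=T$ and use subadditivity of $s\mapsto s^{p/3}$:
\[
\E|Y_T^\eps|^p\le C{\rm e}^{-\lambda_5pT/(3\eps)}|Y_0^\eps|^p+C_T\bigl(H_0^p+\eps^{p/3}|Y_0^\eps|^p+r_0^p+\rho_d^p\bigr).
\]
Finally, since $V\ge c_-|x|^p$ by \eqref{eq-3.29}, we have $H_0^p\le U_0/c_-$. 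Collecting the coefficients of $|Y_0^\eps|^p$ gives exactly $d_\eps$.

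\textbf{Main obstacle.}
The delicate point is Step 1. The factor $\eps$ in front of $|Y_0^\eps|^3$ in \eqref{eq-3.57} must come out of the time integral of the fast decay, and not from a crude bound. So the $Y$ estimate has to be substituted inside $\int_0^u$ before Gronwall's inequality is applied. The order of the two Gronwall arguments matters in the same way: close the estimate for $X$ first, and only then return to $Y$. This ordering is what keeps the factor $\eps$.
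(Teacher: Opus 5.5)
Your proposal is correct and follows the paper's proof essentially step for step. You use It\^o plus Young for $|Y^\eps|^3$, BDG and H\"older for the slow supremum, and extract the factor $\eps|Y_0^\eps|^3$ by integrating the fast exponential decay before applying Gronwall; you then back-substitute to get \eqref{eq-3.58}, use a BDG increment bound for \eqref{eq-3.59}, and use H\"older together with $c_-H_0^p\le U_0$ for \eqref{eq-3.60}. The only cosmetic difference is that you bound the convolution term by $C\sup_{v\le u}\E|X_v^\eps|^3$ before integrating, whereas the paper integrates the convolution form \eqref{eq-3.62} and applies Fubini; both give the same result.
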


\begin{proof}
We first establish the third-order moment bounds.
By It\^o's formula, Assumption~\ref{AS-3.2},
and Young's inequality, choosing $\lambda_5>0$
sufficiently small yields
\begin{small}
\begin{equation}\label{eq-3.62}
\E|Y_t^\eps|^3
\le C{\rm e}^{-\lambda_5t/\eps}|Y_0^\eps|^3
+C\rho_d^3
+\frac C\eps\int_0^t
{\rm e}^{-\lambda_5(t-r)/\eps}
\E|X_r^\eps|^3\,{\rm d}r.
\end{equation}
\end{small}
On the other hand, the linear growth condition,
H\"older's inequality, and the
Burkholder--Davis--Gundy inequality give
\begin{small}
\[
\E\sup_{-\tau\le v\le t}|X_v^\eps|^3
\le C_T(H_0^3+r_0^3)
+C_T\int_0^t
\left(\E\sup_{-\tau\le v\le r}|X_v^\eps|^3
+\E|Y_r^\eps|^3\right){\rm d}r.
\]
\end{small}
Integrating \eqref{eq-3.62} over $[0,t]$
and applying Fubini's theorem, we obtain
\begin{small}
\[
\int_0^t\E|Y_r^\eps|^3\,{\rm d}r
\le C\eps|Y_0^\eps|^3+C_T\rho_d^3
+C\int_0^t\E\sup_{-\tau\le v\le r}|X_v^\eps|^3\,{\rm d}r.
\]
\end{small}
Gronwall's inequality now proves \eqref{eq-3.57}.
Substituting \eqref{eq-3.57} into \eqref{eq-3.62}
gives \eqref{eq-3.58}. H\"older's inequality, \eqref{eq-3.58},
and  $c_-H_0^p\le U_0\le c_+H_0^p$ yields
\begin{small}
\[
\begin{aligned}
\E|Y_T^\eps|^p
\le\bigl(\E|Y_T^\eps|^3\bigr)^{p/3}
\le C_TU_0+
\bigl(C_T\eps^{p/3}
+C{\rm e}^{-\lambda_5pT/(3\eps)}\bigr)|Y_0^\eps|^p
+C_T(r_0^p+\rho_d^p),
\end{aligned}
\]
\end{small}
which proves \eqref{eq-3.60}. Finally, by the BDG
inequality and the growth condition,
\begin{small}
\[
\begin{aligned}
\E\sup_{r\le v\le r+u}|X_v^\eps-X_r^\eps|^3
&\le C_T(u^2+u^{1/2})
\int_r^{r+u}
\E\bigl(r_0^3+|X_q^\eps|^3+|Y_q^\eps|^3\bigr)
\,{\rm d}q\\
&\le C_T(u^3+u^{3/2})R_0^3
\le C_Tu^{3/2}R_0^3,
\end{aligned}
\]
\end{small}
where we used \eqref{eq-3.57}--\eqref{eq-3.58}
and $u\le T$. This proves \eqref{eq-3.59}.
\end{proof}

Fix an averaging step size $\Delta\in(0,\min\{1,T\}]$.
For each $\ell\in(0,T]$, set
$N:=\lceil\ell/\Delta\rceil-1$,
$q_j:=j\Delta$, and
$I_j:=[q_j,q_{j+1}\wedge\ell]$ for $j=0,\ldots,N$.
Then
\begin{small}
$N+1=\left\lceil\frac{\ell}{\Delta}\right\rceil
\le\frac{\ell+\Delta}{\Delta}$.
\end{small}
On each $I_j$, define the frozen fast process
$\widehat Y^j$ by
\begin{small}
\begin{equation}\label{eq-3.68}
{\rm d}\widehat Y_r^j
=\frac1\eps f(X_{q_j}^\eps,\widehat Y_r^j)\,{\rm d}r
+\frac1{\sqrt\eps}g(X_{q_j}^\eps,\widehat Y_r^j)\,{\rm d}B_r^2,
\qquad \widehat Y_{q_j}^j=Y_{q_j}^\eps.
\end{equation}
\end{small}

\begin{lemma}\label{LM-3.11}
For each $j=0,\ldots,N$ and $r\in I_j$,
\begin{small}
\begin{align}
\E|Y_r^\eps-\widehat Y_r^j|^2
&\le C_T(r-q_j)R_0^2,
\label{eq-3.69}\\
\E|\widehat Y_r^j|^3
&\le C_TR_0^3.
\label{eq-3.70}
\end{align}
\end{small}
\end{lemma}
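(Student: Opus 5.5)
For \eqref{eq-3.69}, I will subtract the equation \eqref{eq-3.68} from the fast equation in \eqref{eq-1.1} on $I_j$. Set $Z_r:=Y_r^\eps-\widehat Y_r^j$, so that $Z_{q_j}=0$. By It\^o's formula,
\begin{small}
\[
{\rm d}|Z_r|^2=\frac1\eps\Bigl(2\langle Z_r,f(X_r^\eps,Y_r^\eps)-f(X_{q_j}^\eps,\widehat Y_r^j)\rangle
+\|g(X_r^\eps,Y_r^\eps)-g(X_{q_j}^\eps,\widehat Y_r^j)\|_{\rm F}^2\Bigr){\rm d}r+{\rm d}M_r,
\]
\end{small}
where $M$ is a local martingale. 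I will split each coefficient difference at the intermediate point $(X_{q_j}^\eps,Y_r^\eps)$. The part with a common first argument $X_{q_j}^\eps$ is handled by \eqref{eq-3.9}: using $\|A+B\|^2\le(1+\vartheta)\|A\|^2+(1+1/\vartheta)\|B\|^2$, the $g$-part with the $(1+\vartheta)$ weight combines with the $f$-part to give at most $-\lambda_1|Z_r|^2$. The remaining parts involve differences in $x$ only. For these I will use \eqref{eq-3.7} and Young's inequality,
\[
2L|Z_r||X_r^\eps-X_{q_j}^\eps|\le \tfrac{\lambda_1}2|Z_r|^2+C|X_r^\eps-X_{q_j}^\eps|^2.
\]
This should yield
\begin{small}
\[
\frac{{\rm d}}{{\rm d}r}\E|Z_r|^2\le-\frac{\lambda_1}{2\eps}\E|Z_r|^2+\frac C\eps\E|X_r^\eps-X_{q_j}^\eps|^2 .
\]
\end{small}
If needed, I will localise with stopping times and then let the localisation go, using the moment bounds of Lemma~\ref{LM-3.10}.

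Next I will estimate the forcing term. By \eqref{eq-3.59} (applied with $r=q_j$, $u=r-q_j\le\Delta\le T$) and Jensen's inequality,
\[
\E|X_r^\eps-X_{q_j}^\eps|^2\le \bigl(C_T(r-q_j)^{3/2}R_0^3\bigr)^{2/3}=C_T(r-q_j)R_0^2 .
\]
A comparison (Gronwall) argument then gives
\[
\E|Z_r|^2\le \frac C\eps\int_{q_j}^r {\rm e}^{-\lambda_1(r-u)/(2\eps)}\,C_T(u-q_j)R_0^2\,{\rm d}u\le C_T(r-q_j)R_0^2 .
\]
Here I bound $u-q_j\le r-q_j$ and use $\frac1\eps\int_0^\infty {\rm e}^{-\lambda_1 s/(2\eps)}{\rm d}s=2/\lambda_1$. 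This is exactly \eqref{eq-3.69}, uniformly in $\eps$.

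For \eqref{eq-3.70}, I will apply It\^o's formula to $|\widehat Y_r^j|^3$, conditionally on $\mathcal F_{q_j}$. The frozen slow variable $X_{q_j}^\eps$ is $\mathcal F_{q_j}$-measurable, and after the time change $r\mapsto (r-q_j)/\eps$ the process $\widehat Y^j$ is the frozen process $Y^{x,y}$ of \eqref{eq-3.11}, with $x=X_{q_j}^\eps$, $y=Y_{q_j}^\eps$, driven by a Brownian motion independent of $\mathcal F_{q_j}$. Lemma~\ref{LM-3.3}(i) with $i=3$ then gives
\[
\E\bigl[|\widehat Y_r^j|^3\,\big|\,\mathcal F_{q_j}\bigr]\le |Y_{q_j}^\eps|^3+C\bigl(\rho_d^3+|X_{q_j}^\eps|^3\bigr).
\]
Taking expectations, I will use \eqref{eq-3.57} to get $\E|X_{q_j}^\eps|^3\le C_TR_0^3$, and \eqref{eq-3.58} to get $\E|Y_{q_j}^\eps|^3\le C|Y_0^\eps|^3+C_TR_0^3$. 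Since $|Y_0^\eps|^3\le R_0^3$ and $H_0^3+\eps|Y_0^\eps|^3+r_0^3+\rho_d^3\le R_0^3$, this gives \eqref{eq-3.70}.

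The main obstacle is the first step: keeping the contraction \eqref{eq-3.9} intact while absorbing the cross terms from the $x$-perturbation in $g$. This is where the extra $\vartheta$ in \eqref{eq-3.9} is needed. The $1/\eps$ factor must also be absorbed by the exponential decay, so that the final constant does not depend on $\eps$. A further technical point is justifying the use of the Markov property for the frozen equation with random frozen parameter. If this is delicate, I will instead repeat the It\^o--Young computation behind \eqref{eq-3.12} directly for $\widehat Y^j$, using \eqref{eq-3.8}, which gives the same bound.
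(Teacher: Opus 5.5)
Your proposal is correct and follows essentially the same route as the paper. The paper runs the same synchronous-coupling It\^o estimate for $|Y_r^\eps-\widehat Y_r^j|^2$ via \eqref{eq-3.7}, \eqref{eq-3.9} and Young's inequality, closes it with the variation-of-constants bound using \eqref{eq-3.59}, and obtains the third moment from the conditional version of \eqref{eq-3.12} combined with \eqref{eq-3.57}--\eqref{eq-3.58}; your explicit $(1+\vartheta)$-weighted splitting of the $g$-difference, the localisation, and the time-change justification of the conditional bound just spell out details the paper leaves implicit.
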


\begin{proof}
Set $Z_r:=Y_r^\eps-\widehat Y_r^j$.
By It\^o's formula, \eqref{eq-3.7},
\eqref{eq-3.9}, and Young's inequality,
\begin{small}
$$
\frac{{\rm d}}{{\rm d}r}\E|Z_r|^2
\le-\frac{\lambda_1}{2\eps}\E|Z_r|^2
+\frac C\eps\E|X_r^\eps-X_{q_j}^\eps|^2.
$$
\end{small}
Since $Z_{q_j}=0$, it follows from
\eqref{eq-3.59} and H\"older's inequality that
\begin{small}
\[
\E|Z_r|^2
\le\frac C\eps\int_{q_j}^r
{\rm e}^{-\lambda_1(r-u)/(2\eps)}
\E|X_u^\eps-X_{q_j}^\eps|^2\,{\rm d}u
\le C_TR_0^2(r-q_j),
\]
\end{small}
which proves \eqref{eq-3.69}. Moreover, by the conditional version of
\eqref{eq-3.12},
\begin{small}
\[
\E\bigl[|\widehat Y_r^j|^3\mid\mathcal F_{q_j}\bigr]
\le C{\rm e}^{-\lambda_2(r-q_j)/\eps}|Y_{q_j}^\eps|^3
+C(\rho_d^3+|X_{q_j}^\eps|^3).
\]
\end{small}
Taking expectations and applying
\eqref{eq-3.57}--\eqref{eq-3.58}
gives \eqref{eq-3.70}.
\end{proof}

In the following, we  use the cutoff Lyapunov function to estimate
the averaging and delay errors. Henceforth, assume $R_0>0$,
since $R_0=0$ implies $X_t^\eps\equiv0$ by
\eqref{eq-3.57}, making all subsequent estimates trivial. We estimate the averaging error arising from the
Lyapunov analysis. For $\delta>0$, $\lambda\in(0,\lambda_6]$,
applying It\^o's formula
to ${\rm e}^{\lambda r}V_\delta(X_r^\eps)$, taking expectations  and using \eqref{eq-3.45} yield
\begin{small}
\begin{equation}\label{eq-3.71a}
\begin{aligned}
&{\rm e}^{\lambda\ell}\E V_\delta(X_\ell^\eps)+(\lambda_6-\lambda)
\int_0^\ell {\rm e}^{\lambda r}
\E V_\delta(X_r^\eps)\,{\rm d}r
=V_\delta(\phi(0))\\
&+\E\int_0^\ell {\rm e}^{\lambda r}
\bigl[\lambda_6 V_\delta(X_r^\eps)
+\widetilde{\mathscr L}V_\delta(X_r^\eps)\bigr]\,{\rm d}r+\E\int_0^\ell {\rm e}^{\lambda r}
\Psi_\delta(X_r^\eps,Y_r^\eps)\,{\rm d}r
+\E\mathcal D_\ell^\delta,\\
&\leq  V_\delta(\phi(0))+C_T\delta^p+\left|\E\int_0^\ell {\rm e}^{\lambda r}
\Psi_\delta(X_r^\eps,Y_r^\eps)\,{\rm d}r\right|
+|\E\mathcal D_\ell^\delta|.
\end{aligned}
\end{equation}
\end{small}
where
\begin{small}$\mathcal D_\ell^\delta
:=\frac12\int_0^\ell {\rm e}^{\lambda r}
\operatorname{tr}\bigl(
[a(X_{r-\tau}^\eps,Y_r^\eps)-a(X_r^\eps,Y_r^\eps)]
D^2V_\delta(X_r^\eps)\bigr)\,{\rm d}r$.
\end{small}

\begin{lemma}\label{LM-3.12}
For $0<\ell\le T$ and $0<\delta\le R_0$, we have
\begin{small}
\begin{equation}\label{eq-3.71}
\left|
\E\int_0^\ell {\rm e}^{\lambda r}
\Psi_\delta(X_r^\eps,Y_r^\eps)\,{\rm d}r
\right|
\le C_T\left(1+\frac{R_0^2}{\delta^2}\right)
(\ell+\Delta)
\left(\Delta^{\frac p2\wedge\frac12}
+\frac{\eps}{\Delta}\right)R_0^p.
\end{equation}
\end{small}
\end{lemma}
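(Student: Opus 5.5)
We split the integral over the blocks $I_j$ from the averaging partition and, on each block, replace $(X_r^\eps,Y_r^\eps)$ by the frozen pair $(X_{q_j}^\eps,\widehat Y_r^j)$:
\[
\Psi_\delta(X_r^\eps,Y_r^\eps)
=\bigl[\Psi_\delta(X_r^\eps,Y_r^\eps)-\Psi_\delta(X_{q_j}^\eps,Y_r^\eps)\bigr]
+\bigl[\Psi_\delta(X_{q_j}^\eps,Y_r^\eps)-\Psi_\delta(X_{q_j}^\eps,\widehat Y_r^j)\bigr]
+\Psi_\delta(X_{q_j}^\eps,\widehat Y_r^j).
\]
The first two brackets will give the $\Delta^{\frac p2\wedge\frac12}$ contribution, and the last term the $\eps/\Delta$ contribution. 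Summing over the $N+1\le(\ell+\Delta)/\Delta$ blocks, each of length at most $\Delta$, will produce the factor $(\ell+\Delta)$. Throughout we will use ${\rm e}^{\lambda r}\le C_T$.

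\emph{Slow increment.} For $0<p\le1$ we use \eqref{eq-3.46} to bound the first bracket by $C\Lambda_\delta(Y_r^\eps)|X_r^\eps-X_{q_j}^\eps|^p$. Hölder's inequality with exponents $3/2$ and $3$ then gives
\[
\E\bigl[\Lambda_\delta(Y_r^\eps)\,|X_r^\eps-X_{q_j}^\eps|^p\bigr]
\le\bigl(\E\Lambda_\delta^{3/2}\bigr)^{2/3}\bigl(\E|X_r^\eps-X_{q_j}^\eps|^{3p}\bigr)^{1/3}.
\]
The first factor is controlled by \eqref{eq-3.58} as $\E\Lambda_\delta^{3/2}\le C_T(1+R_0^3/\delta^3)$, using $\delta\le R_0$. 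For the second factor, since $3p\le3$, \eqref{eq-3.59} and Jensen give $\E|X_r^\eps-X_{q_j}^\eps|^{3p}\le(C_T\Delta^{3/2}R_0^3)^{p}$. Together these yield $C_T(1+R_0^2/\delta^2)\Delta^{p/2}R_0^p$.

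For $1<p<2$ we instead apply the mean value theorem directly on $\Psi_\delta$, separating the cases $|x-z|$ small or large relative to $|x|\vee|z|$ exactly as in the proof of \Cref{LM-3.9}. The derivative of $\Psi_\delta$ in $x$ is bounded by $C\Lambda_\delta(y)\,(|x|\vee\delta)^{p-1}\le C\Lambda_\delta(y)(R_0+|x|)^{p-1}$, which gives the bound $C\Lambda_\delta(y)(|x|+|z|+R_0)^{p-1}|x-z|$. After the same Hölder step this yields $\Delta^{1/2}R_0^p$.

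\emph{Fast replacement.} In the second bracket $x$ is frozen. Since $a-\bar a$ is locally Lipschitz in $y$ by \eqref{1317}, we have
\[
|\Psi_\delta(x,y)-\Psi_\delta(x,v)|\le C\bigl(|x|^{p-1}+(M+|x|)|x|^{p-2}\bigr)|y-v|\mathbf1_{\{|x|>\delta\}}
\le C\,\delta^{-1}(R_0+|x|+|y|+|v|)\,|x|^{p-1}|y-v|.
\]
This is at most $C(1+R_0^2/\delta^2)\,\cdots$ after we scale. We then apply Cauchy--Schwarz and \eqref{eq-3.69}, which gives $\E|Y_r^\eps-\widehat Y_r^j|^2\le C_T\Delta R_0^2$, together with \eqref{eq-3.70} and \eqref{eq-3.57} for the remaining factors. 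The result is again $C_T(1+R_0^2/\delta^2)\Delta^{1/2}R_0^p$; here we use $|x|^{p-1}\le\delta^{p-1}$ when $p<1$, and then $\delta^{-1}\le\delta^{-2}R_0$.

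\emph{Averaging term.} We condition on $\mathcal F_{q_j}$. Given this $\sigma$-field, $\widehat Y_{r}^j$ has the law of $Y^{x,y}_{(r-q_j)/\eps}$ with $x=X_{q_j}^\eps$ and $y=Y_{q_j}^\eps$, by time rescaling of \eqref{eq-3.68}. Since $\Psi_\delta(x,\cdot)$ is linear in $b-\bar b$ and $a-\bar a$, \eqref{eq-3.15}--\eqref{eq-3.16} give
\[
\bigl|\E[\Psi_\delta(x,\widehat Y_r^j)\mid\mathcal F_{q_j}]\bigr|
\le C{\rm e}^{-\lambda_3(r-q_j)/\eps}\bigl(|x|^{p-1}(M'+|x|)+|x|^{p-2}(M'+|x|)^2\bigr)\mathbf1_{\{|x|>\delta\}},
\]
with $M'=r_0+\rho_d+|y|$. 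The bracket is at most $C(1+M'^2/\delta^2)|x|^p\le C\Lambda_\delta(y)|x|^p$. The factor ${\rm e}^{\lambda r}$ is not $\mathcal F_{q_j}$-measurable, but it is deterministic, so it poses no difficulty. Integrating over $I_j$ gives $\int{\rm e}^{-\lambda_3 (r-q_j)/\eps}\,{\rm d}r\le\eps/\lambda_3$. Taking expectations, with Hölder and \eqref{eq-3.57}--\eqref{eq-3.58}, gives $C_T(1+R_0^2/\delta^2)\eps R_0^p$ per block. Multiplying by $(\ell+\Delta)/\Delta$ blocks produces the $\eps/\Delta$ term.

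\emph{Main obstacle.} The delicate step is the slow-increment term. There we must track the singular weights $|x|^{p-j}$ uniformly while only paying $\Lambda_\delta$, so that the final power is exactly $R_0^p$ and $\delta$ enters only through $R_0^2/\delta^2$. The case split from \Cref{LM-3.9} has to be reused carefully for $1<p<2$, where \eqref{eq-3.46} is not available.
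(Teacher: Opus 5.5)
Your three-term decomposition, the block counting, and the conditional mixing step with deterministic ${\rm e}^{\lambda r}$ are the paper's. For $0<p\le1$ your argument is correct, since every product you take expectations of has degree at most $3$. For $1<p<2$, however, there is a genuine gap. You package the singular weights as $\Lambda_\delta(y)$ times a nonnegative power of $|x|$:
\begin{itemize}
\item $C\Lambda_\delta(y)(|x|+|z|+R_0)^{p-1}|x-z|$ for the slow increment;
\item $C\delta^{-1}(R_0+|x|+|y|+|v|)|x|^{p-1}|y-v|$ for the fast replacement;
\item $C\Lambda_\delta(y)|x|^p$ in the mixing term.
\end{itemize}
These pointwise bounds are true, but the random variables you must then integrate are too high-order. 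For example, $\delta^{-2}|Y_r^\eps|^2|X_r^\eps|^{p-1}|X_r^\eps-X_{q_j}^\eps|$ and $\delta^{-2}|Y_{q_j}^\eps|^2|X_{q_j}^\eps|^p$ have total degree $p+2>3$. In the fast-replacement term, the Cauchy--Schwarz step needed to extract $h^{1/2}$ from \eqref{eq-3.69} leaves a factor of degree $2p$, which exceeds $3$ once $p>3/2$.

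Only third moments are available here. Lemma~\ref{LM-3.10} and \eqref{eq-3.70} are third-moment bounds, and \eqref{eq-3.8} supports no more, since It\^o's formula for $|y|^4$ would require control of $2\langle y,f\rangle+3\normF{g}^2$. No choice of H\"older exponents rescues this: the reciprocal exponents would have to sum to at least $(p+2)/3>1$, respectively $1/2+p/3>1$. So the ``same H\"older step'' you invoke for $1<p<2$ does not exist, and the same defect affects the expectation in your averaging term.

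The missing idea is to trade the negative powers of $|x|$ for powers of $\delta$ on $\{|x|>\delta\}$, rather than converting them into $\Lambda_\delta(y)$ times a positive power of $|x|$. Set $D_r=|X_r^\eps-X_{q_j}^\eps|$ and $M_r=r_0+\rho_d+|X_r^\eps|+|X_{q_j}^\eps|+|Y_r^\eps|$. The paper proceeds as follows.
\begin{itemize}
\item It proves $|A_j(r)|\le CD_r\bigl(M_r^{p-1}+\delta^{p-2}M_r+\delta^{p-3}M_r^2\bigr)$ by separating coefficient differences from differences of $DV_\delta$ and $D^2V_\delta$.
\item It proves $|B_j(r)|\le C\delta^{p-2}\bigl(r_0+|X_{q_j}^\eps|+|Y_r^\eps|+|\widehat Y_r^j|\bigr)|Y_r^\eps-\widehat Y_r^j|$ by writing $|x|^{p-1}=|x|^{p-2}|x|$ and then using $|x|^{p-2}\le\delta^{p-2}$, which is valid for every $p<2$.
\item It uses a mixing bound equivalent to $C{\rm e}^{-\lambda_3h/\eps}\bigl(|z|^p+\delta^{p-2}(r_0+\rho_d+|y_0|)^2\bigr)$.
\end{itemize}
Every product then has degree at most $3$. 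H\"older with Lemmas~\ref{LM-3.10}--\ref{LM-3.11} gives $C_Th^{1/2}\bigl(R_0^p+\delta^{p-2}R_0^2+\delta^{p-3}R_0^3\bigr)$ and its analogues. Since $2-p,\,3-p\in(0,2)$, this is bounded by $C_T(1+R_0^2/\delta^2)h^{1/2}R_0^p$. With this bookkeeping the rest of your argument goes through unchanged.
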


\begin{proof}
For $r\in I_j$, we decompose
\begin{small}
\begin{equation}\label{eq-3.72}
\begin{aligned}
\Psi_\delta(X_r^\eps,Y_r^\eps)
&=\bigl[\Psi_\delta(X_r^\eps,Y_r^\eps)
-\Psi_\delta(X_{q_j}^\eps,Y_r^\eps)\bigr]+\bigl[\Psi_\delta(X_{q_j}^\eps,Y_r^\eps)
-\Psi_\delta(X_{q_j}^\eps,\widehat Y_r^j)\bigr]
\\&+\Psi_\delta(X_{q_j}^\eps,\widehat Y_r^j)=:A_j(r)+B_j(r)+C_j(r).
\end{aligned}
\end{equation}
\end{small}
Set $h=r-q_j$ and $K_\delta:=1+R_0^2/\delta^2$.
We first estimate $A_j$. If $0<p\le1$, then
\eqref{eq-3.46}, H\"older's inequality,
and Lemma~\ref{LM-3.10} yield
\begin{small}
\[
\begin{aligned}
\E|A_j(r)|
&\le C\E|X_r^\eps-X_{q_j}^\eps|^p
+C\delta^{-2}
\bigl[\E(r_0+\rho_d+|Y_r^\eps|)^3\bigr]^{2/3}
\bigl[\E|X_r^\eps-X_{q_j}^\eps|^{3p}\bigr]^{1/3}\\
&\le C_TK_\delta h^{p/2}R_0^p.
\end{aligned}
\]
\end{small}

If $1<p<2$, set
$D_r:=|X_r^\eps-X_{q_j}^\eps|$ and
$M_r:=r_0+\rho_d+|X_r^\eps|
+|X_{q_j}^\eps|+|Y_r^\eps|$.
Writing $x=X_r^\eps$, $z=X_{q_j}^\eps$
and $y=Y_r^\eps$, we decompose
\begin{small}
\begin{align*}
A_j(r)
=&\big\langle
b(x,y)-\bar b(x)-b(z,y)+\bar b(z),
DV_\delta(x)\big\rangle+\big\langle
b(z,y)-\bar b(z),
DV_\delta(x)-DV_\delta(z)\big\rangle\\
&+\frac12\operatorname{tr}\Big[
\big(a(x,y)-\bar a(x)
-a(z,y)+\bar a(z)\big)
D^2V_\delta(x)\Big]\\
&+\frac12\operatorname{tr}\Big[
\big(a(z,y)-\bar a(z)\big)
\big(D^2V_\delta(x)
-D^2V_\delta(z)\big)\Big].
\end{align*}
\end{small}
By the coefficient difference bounds in
Assumption~\ref{AS-3.1} and
Lemma~\ref{LM-3.4}, together with
Lemma~\ref{LM-3.8} and the mean value
theorem, we obtain
\begin{small}$$
|A_j(r)|
\le CD_r\bigl(
M_r^{p-1}+\delta^{p-2}M_r
+\delta^{p-3}M_r^2
\bigr).$$\end{small}
By H\"older's inequality and Lemma~\ref{LM-3.10}, for $0\le i\le 2$,
\begin{small}$$
\E[D_rM_r^i]
\le(\E D_r^3)^{1/3}(\E M_r^{3i/2})^{2/3}
\le(\E D_r^3)^{1/3}(\E M_r^3)^{i/3}
\le C_T h^{1/2}R_0^{i+1}.$$
\end{small}
Thus
\begin{small}$
\E|A_j(r)|
\le C_T h^{1/2}
\bigl(R_0^p+\delta^{p-2}R_0^2
+\delta^{p-3}R_0^3\bigr)\le C_TK_\delta h^{1/2}R_0^p,
$\end{small}
where we used $1<p<2$ and $\delta\le R_0$. Combining both cases gives
\begin{small}
\begin{equation}\label{eq-3.73}
\E|A_j(r)|
\le C_TK_\delta h^{\frac p2\wedge\frac12}R_0^p.
\end{equation}
\end{small}

Next, we estimate $B_j(r)$. The averaged terms
in its definition cancel, giving
\begin{small}$$
B_j(r)
=\big\langle
b(X_{q_j}^\eps,Y_r^\eps)
-b(X_{q_j}^\eps,\widehat Y_r^j),
DV_\delta(X_{q_j}^\eps)\big\rangle+\frac12\operatorname{tr}\Big[
\big(a(X_{q_j}^\eps,Y_r^\eps)
-a(X_{q_j}^\eps,\widehat Y_r^j)\big)
D^2V_\delta(X_{q_j}^\eps)\Big].$$\end{small}
By the coefficient bounds and Lemma \ref{LM-3.8},
\begin{small}$$
\begin{aligned}
|B_j(r)|
\le{}&C\mathbf1_{\{|X_{q_j}^\eps|>\delta\}}
\Big[|X_{q_j}^\eps|^{p-1}+|X_{q_j}^\eps|^{p-2}
(r_0+|X_{q_j}^\eps|+|Y_r^\eps|
+|\widehat Y_r^j|)\Big]
|Y_r^\eps-\widehat Y_r^j|\\
\le{}&C\mathbf1_{\{|X_{q_j}^\eps|>\delta\}}
|X_{q_j}^\eps|^{p-2}
(r_0+|X_{q_j}^\eps|+|Y_r^\eps|
+|\widehat Y_r^j|)
|Y_r^\eps-\widehat Y_r^j|.
\end{aligned}
$$\end{small}
Consequently, since $p<2$, Cauchy--Schwarz
and Lemmas~\ref{LM-3.10}--\ref{LM-3.11} yield
\begin{small}\begin{align}
\E|B_j(r)|
&\le C\delta^{p-2}
\Big[\E(r_0+|X_{q_j}^\eps|+|Y_r^\eps|
+|\widehat Y_r^j|)^2\Big]^{1/2}
\Big[\E|Y_r^\eps-\widehat Y_r^j|^2\Big]^{1/2}
\nonumber\\
&\le C_T\delta^{p-2}R_0^2h^{1/2}
\le C_TK_\delta h^{1/2}R_0^p.
\label{eq-3.75}
\end{align}\end{small}

Finally, conditionally on $\mathcal F_{q_j}$,
$\widehat Y_r^j$ has the law of the frozen fast process
at time $h/\eps$ with initial data
$(X_{q_j}^\eps,Y_{q_j}^\eps)$.
Writing $z=X_{q_j}^\eps$ and $y_0=Y_{q_j}^\eps$, we have
\begin{small}$$
\E[C_j(r)\mid\mathcal F_{q_j}]
=\big\langle
\E[b(z,\widehat Y_r^j)\mid\mathcal F_{q_j}]
-\bar b(z),DV_\delta(z)\big\rangle+\frac12\operatorname{tr}\Big[
\big(\E[a(z,\widehat Y_r^j)\mid\mathcal F_{q_j}]
-\bar a(z)\big)D^2V_\delta(z)\Big].$$
\end{small}
By the exponential-mixing estimates
\eqref{eq-3.15}--\eqref{eq-3.16}
and Lemma~\ref{LM-3.8},
\begin{small}\[
\begin{aligned}
\big|\E[C_j(r)\mid\mathcal F_{q_j}]\big|
\le{}&C{\rm e}^{-\lambda_3h/\eps}
\mathbf1_{\{|z|>\delta\}}
\Big[|z|^{p-1}(r_0+\rho_d+|z|+|y_0|)+|z|^{p-2}
(r_0+|z|+|y_0|)^2\Big]\\
\le{}&C{\rm e}^{-\lambda_3h/\eps}
\mathbf1_{\{|z|>\delta\}}
|z|^{p-2}(r_0+\rho_d+|z|+|y_0|)^2.
\end{aligned}
\]\end{small}
Taking expectations and using
H\"older's inequality and Lemma~\ref{LM-3.10} yields
\begin{small}
\begin{equation}\label{eq-3.76}
\E\bigl|\E[C_j(r)\mid\mathcal F_{q_j}]\bigr|\le C_T{\rm e}^{-\lambda_3h/\eps}
\bigl(R_0^p+\delta^{p-2}R_0^2\bigr)\le C_TK_\delta
{\rm e}^{-\lambda_3h/\eps}R_0^p.
\end{equation}
\end{small}
Since ${\rm e}^{\lambda_6r}$ is deterministic, the
tower property and \eqref{eq-3.76} yield
\begin{small}\begin{equation}\label{1215}
\begin{aligned}
&\left|\E\sum_{j=0}^N\int_{I_j}
{\rm e}^{\lambda_6r}C_j(r)\,{\rm d}r\right|
=\left|\sum_{j=0}^N\int_{I_j}
{\rm e}^{\lambda_6r}
\E\bigl[\E[C_j(r)\mid\mathcal F_{q_j}]\bigr]
\,{\rm d}r\right|\\
&\le\sum_{j=0}^N\int_{I_j}
{\rm e}^{\lambda_6r}
\E\bigl|\E[C_j(r)\mid\mathcal F_{q_j}]\bigr|
\,{\rm d}r\le C_TK_\delta R_0^p
\sum_{j=0}^N\int_{I_j}
{\rm e}^{-\lambda_3(r-q_j)/\eps}\,{\rm d}r\le C_TK_\delta(N+1)\eps R_0^p.
\end{aligned}\end{equation}\end{small}
Here we used $r\le T$ and
$\int_{I_j}{\rm e}^{-\lambda_3(r-q_j)/\eps}
\,{\rm d}r\le\eps/\lambda_3$. Combining the decomposition
\eqref{eq-3.72} with \eqref{eq-3.73},\eqref{eq-3.75} and \eqref{1215}, and $N+1\le(\ell+\Delta)/\Delta$ yields
\begin{small}
$$
\begin{aligned}
&\left|\E\int_0^\ell
{\rm e}^{\lambda r}
\Psi_\delta(X_r^\eps,Y_r^\eps)\,{\rm d}r\right|\le
\sum_{j=0}^N\int_{I_j}
{\rm e}^{\lambda r}
\bigl(\E|A_j(r)|+\E|B_j(r)|\bigr)\,{\rm d}r+
\left|\E\sum_{j=0}^N\int_{I_j}
{\rm e}^{\lambda r}C_j(r)\,{\rm d}r\right|\\
&\le C_TK_\delta R_0^p
\left(\ell\Delta^{\frac p2\wedge\frac12}
+(N+1)\eps\right)\le C_TK_\delta(\ell+\Delta)
\left(\Delta^{\frac p2\wedge\frac12}
+\frac{\eps}{\Delta}\right)R_0^p.
\end{aligned}
$$
\end{small}
This proves \eqref{eq-3.71}.
\end{proof}

We now establish the main finite-time Lyapunov estimate.
Set
\begin{small}
\begin{equation}\label{eq-3.79}
\alpha_p:=\frac p2\wedge\frac12,\qquad
\beta_p:=\frac{\alpha_p}{1+\alpha_p},\qquad
\nu_p:=\frac{p\beta_p}{p+2}.
\end{equation}
\end{small}

\begin{lemma}\label{LM-3.13}
Under Assumptions~\ref{AS-3.1}--\ref{AS-3.6},
without requiring Assumption~\ref{AS-3.5a},
for all $0<\ell\le T$, $0<\eps\le1$ and
$0<\tau\le\min\{1,T/2\}$,
\begin{small}
\begin{equation}\label{eq-3.81}
\E V(X_\ell^\eps)
\le {\rm e}^{-\lambda_6\ell}U_0
+C_T(\eps^{\nu_p}+\tau^{p/4})R_0^p.
\end{equation}
\end{small}
If Assumption~\ref{AS-3.5a} also holds, given $\lambda\in(0,\lambda_6)$, 
the delay rate improves to
\begin{small}
\begin{equation}\label{eq-3.81a}
\E V(X_\ell^\eps)
\le {\rm e}^{-\lambda\ell}U_0
+C_T(\eps^{\nu_p}+\tau^{p/2})R_0^p.
\end{equation}
\end{small}
\end{lemma}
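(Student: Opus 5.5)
We start from the Itô identity \eqref{eq-3.71a} with $\lambda=\lambda_6$ for \eqref{eq-3.81}, and with a fixed $\lambda\in(0,\lambda_6)$ for \eqref{eq-3.81a}. This gives
\[
{\rm e}^{\lambda\ell}\E V_\delta(X_\ell^\eps)\le V_\delta(\phi(0))+C_T\delta^p+\Bigl|\E\int_0^\ell {\rm e}^{\lambda r}\Psi_\delta(X_r^\eps,Y_r^\eps)\,{\rm d}r\Bigr|+|\E\mathcal D_\ell^\delta| .
\]
We then pass from $V_\delta$ back to $V$ using $0\le V-V_\delta\le C\delta^p$ from \eqref{eq-3.40}, and use $V_\delta(\phi(0))\le V(\phi(0))\le U_0$. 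The averaging term is bounded by Lemma~\ref{LM-3.12}, which gives $C_T(1+R_0^2/\delta^2)(\Delta^{\alpha_p}+\eps/\Delta)R_0^p$ since $\ell+\Delta\le 2T$. The free parameters $\delta\le R_0$ and $\Delta\in(0,1]$ are then optimized. We choose $\Delta=\eps^{1/(1+\alpha_p)}$, so that $\Delta^{\alpha_p}=\eps/\Delta=\eps^{\beta_p}$. We choose $\delta=\eps^{\gamma}R_0$ and balance $\delta^p=\eps^{p\gamma}R_0^p$ against $\delta^{-2}R_0^2\eps^{\beta_p}R_0^p=\eps^{\beta_p-2\gamma}R_0^p$. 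This gives $\gamma=\beta_p/(p+2)$, hence both terms equal $\eps^{\nu_p}R_0^p$, matching \eqref{eq-3.79}.

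The delay term $\mathcal D_\ell^\delta$ must be handled with a second cutoff, independent of $\eps$. Using \eqref{1317} and \eqref{eq-3.38}, we get
\[
|\operatorname{tr}(\cdots)|\le C\,\1_{\{|X_r|>\delta'\}}|X_r|^{p-2}(r_0+H_r+|Y_r|)\,|X_{r-\tau}-X_r| .
\]
The singular factor is controlled by $|x|^{p-2}\le\delta'^{p-2}$. It is convenient to apply the delay estimate with a separate cutoff level $\delta'$, for instance by splitting $V_\delta$ further, or simply by bounding $\delta^{p-2}$ directly and noting that $\delta$ can be re-chosen as $\min$ of the two optimal scales. By Cauchy--Schwarz, \eqref{eq-3.57}--\eqref{eq-3.59} give $\E|\mathcal D|\le C_T\delta^{p-2}\tau^{1/2}R_0^2$, where we use $|X_{r-\tau}-X_r|$ for $r\ge\tau$ from \eqref{eq-3.59}. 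For $r<\tau$ the difference is bounded crudely by $2R_0$ and integrated over an interval of length $\tau$, which is even smaller. Balancing against $\delta^p$ gives $\delta=\tau^{1/4}R_0$ and an error of $\tau^{p/4}R_0^p$. The final choice is $\delta=R_0\min(\eps^{\beta_p/(p+2)},\tau^{1/4})$. With this $\delta$ the $\delta^{-2}$ factors produce the larger of the two contributions, so every term is dominated by $\eps^{\nu_p}+\tau^{p/4}$. I expect some care here: the Lemma~\ref{LM-3.12} term with $\delta=\tau^{1/4}R_0$ gives $\tau^{-1/2}\eps^{\beta_p}$, which is not small when $\tau\ll\eps$. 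The cleaner route is therefore to introduce two cutoffs, splitting $D^2V_\delta=D^2V_{\delta'}+(D^2V_\delta-D^2V_{\delta'})$ in the delay term. The difference is supported on $|x|\le2\max(\delta,\delta')$ and is bounded pointwise by $C|x|^{p-2}$ there. This localization near the origin is, I expect, the \emph{main obstacle}.

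Under Assumption~\ref{AS-3.5a} the delay estimate sharpens. We have $\sigma(0,y)=0$ and $\sigma$ is Lipschitz, so $\|\sigma(x,y)\|_{\rm F}\le L|x|$. Hence
\[
\|a(X_{r-\tau},Y)-a(X_r,Y)\|\le C(|X_{r-\tau}|+|X_r|)\,|X_{r-\tau}-X_r| ,
\]
and the factor $|x|^{p-2}$ combines with $|X_r|$ to give the milder singularity $|x|^{p-1}$, at the cost of $|X_{r-\tau}|/|X_r|$ cross terms. We bound these by $|X_{r-\tau}|\le|X_r|+|X_{r-\tau}-X_r|$. The leading piece is then $|X_r|^{p-1}|\Delta X|$ plus $\delta^{p-2}|\Delta X|^2$. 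The latter gives $\delta^{p-2}\tau R_0^2$, and with $\delta=\tau^{1/2}R_0$ the error is $\tau^{p/2}R_0^p$. For the former piece, when $p\le1$ we instead use the scale-invariant bound obtained by case-splitting $|\Delta X|\gtrless|X_r|/2$, as in Lemma~\ref{LM-3.9}. This produces a term $\le C\,|\Delta X|^p$ plus a term that is small relative to $V(X_r)$, namely $\le \kappa V(X_r)+C_\kappa\,(\ldots)$. That relative term is absorbed into the dissipation $(\lambda_6-\lambda)\int {\rm e}^{\lambda r}\E V_\delta$ retained on the left of \eqref{eq-3.71a}, which explains why the rate drops to $\lambda<\lambda_6$. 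Expectations of $|\Delta X|^p$ are of order $\tau^{p/2}R_0^p$ by \eqref{eq-3.59}, which gives \eqref{eq-3.81a}.
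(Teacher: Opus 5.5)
You follow the paper up to the choice of $\delta$: the It\^o identity \eqref{eq-3.71a}, Lemma~\ref{LM-3.12}, the bound $\E|\mathcal D_\ell^\delta|\le C_T\delta^{p-2}R_0^2\sqrt{\tau\ell}$ from $\|D^2V_\delta\|\le C\delta^{p-2}$ and Cauchy--Schwarz, and $0\le V-V_\delta\le C\delta^p$. \textbf{The gap is the final parameter choice, exactly where you stalled.}

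With $\delta=R_0\min\{\eps^{\beta_p/(p+2)},\tau^{1/4}\}$ the averaging term blows up when $\tau\ll\eps$, as you noticed. Your two-cutoff repair does not fix this. Without Assumption~\ref{AS-3.5a}, the factor $r_0+\rho_d+|Y_r^\eps|$ in the delay integrand does not vanish at $x=0$. The piece $D^2V_\delta-D^2V_{\delta'}$ lives on $\{\delta\wedge\delta'<|x|<2(\delta\vee\delta')\}$, where it is only bounded by $C(\delta\wedge\delta')^{p-2}$. So you gain nothing unless $\delta\ge\delta'$, and then the split is superfluous. There is no localization obstacle.

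The missing observation is that only the cutoff error $\delta^p$ penalizes a large $\delta$, and it does so additively, while every other error term penalizes a small $\delta$. So take the \emph{maximum}, as the paper does: $\delta=\eta R_0$ with $\eta:=\eps^{\beta_p/(p+2)}\vee\tau^{1/4}\le1$ (so $\delta\le R_0$, as Lemma~\ref{LM-3.12} requires) and $\Delta=\min\{1,T\}\eps^{1/(1+\alpha_p)}$. Then:
\begin{itemize}
\item $\delta^p\le(\eps^{\nu_p}+\tau^{p/4})R_0^p$;
\item $\eta\ge\eps^{\beta_p/(p+2)}$ gives $\eta^{-2}\eps^{\beta_p}\le\eps^{\nu_p}$;
\item $\eta\ge\tau^{1/4}$ gives $\eta^{p-2}\tau^{1/2}\le\tau^{p/4}$.
\end{itemize}
This yields \eqref{eq-3.81}. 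The same issue arises under Assumption~\ref{AS-3.5a}: $\delta=\tau^{1/2}R_0$ alone again spoils the averaging term. One needs $\widehat\eta:=\eps^{\beta_p/(p+2)}\vee\tau^{1/2}$, for which $\widehat\eta^{p-2}\tau\le\tau^{p/2}$.

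In the Assumption~\ref{AS-3.5a} part you also have the roles of the two ranges of $p$ reversed.

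For $0<p\le1$ nothing needs to be absorbed. On $\{|X_r^\eps|>\delta\}$ one has $|X_r^\eps|^{p-1}\le\delta^{p-1}$, so $\E|\mathcal D_\ell^\delta|\le C_T(\delta^{p-1}R_0\sqrt\tau+\delta^{p-2}R_0^2\tau)\le C_T\tau^{p/2}R_0^p$ for $\delta=\widehat\eta R_0$, and the paper keeps $\lambda=\lambda_6$. Your case split also works, but on $\{|X_r^\eps|\le2|X_r^\eps-X_{r-\tau}^\eps|\}$ the term is bounded by $2\delta^{p-2}|X_r^\eps-X_{r-\tau}^\eps|^2$, not by something relative to $V(X_r^\eps)$.

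The loss of rate occurs for $1<p<2$, which you do not treat. There $|x|^{p-1}$ is not bounded by a power of $\delta$, and Cauchy--Schwarz only gives $\tau^{1/2}$, which is larger than $\tau^{p/2}$. One uses Young's inequality $|x|^{p-1}|h|\le\varkappa|x|^p+C_\varkappa|h|^p$. Then one absorbs $\varkappa|x|^p\1_{\{|x|>\delta\}}\le(\lambda_6-\lambda)V_\delta(x)+C\delta^p$ into the term $(\lambda_6-\lambda)\int_0^\ell{\rm e}^{\lambda r}\E V_\delta(X_r^\eps)\,{\rm d}r$ on the left of \eqref{eq-3.71a}. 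The remainder is $C_{T,\varkappa}\E\int_0^\ell|X_r^\eps-X_{r-\tau}^\eps|^p\,{\rm d}r\le C_T\tau^{p/2}R_0^p$. This absorption is why \eqref{eq-3.81a} holds only with $\lambda<\lambda_6$.
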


\begin{proof}

We estimate $|\E\mathcal D_\ell^\delta|$ under the basic assumptions. By \eqref{1317} and Lemma~\ref{LM-3.8},
\begin{small}$$
\begin{aligned}
|\mathcal D_\ell^\delta|
&\le C_T\int_0^\ell
\|D^2V_\delta(X_r^\eps)\|
\|a(X_{r-\tau}^\eps,Y_r^\eps)
-a(X_r^\eps,Y_r^\eps)\|\,{\rm d}r\\
&\le C_T\delta^{p-2}\int_0^\ell
(r_0+\rho_d+|X_r^\eps|
+|X_{r-\tau}^\eps|+|Y_r^\eps|)
|X_r^\eps-X_{r-\tau}^\eps|\,{\rm d}r.
\end{aligned}$$\end{small}
By H\"older's inequality,
\begin{small}
\[
\begin{aligned}
\E|\mathcal D_\ell^\delta|
\le C_T\delta^{p-2}
\sqrt{\E\int_0^\ell
(r_0+\rho_d+|X_r^\eps|
+|X_{r-\tau}^\eps|+|Y_r^\eps|)^2\,{\rm d}r
}
\sqrt{\E\int_0^\ell
|X_r^\eps-X_{r-\tau}^\eps|^2\,{\rm d}r}.
\end{aligned}
\]
\end{small}
By Lemma~\ref{LM-3.10}, the first integral
is bounded by $C_T\ell R_0^2$.
For the second, the bounded initial history
and \eqref{eq-3.57} control $r<\tau$,
while \eqref{eq-3.59} controls $r\ge\tau$.
Thus,
\begin{small}
\[
\E\int_0^\ell
|X_r^\eps-X_{r-\tau}^\eps|^2\,{\rm d}r
\le C_TR_0^2
\bigl((\ell\wedge\tau)+\tau(\ell-\tau)^+\bigr)
\le C_T\tau R_0^2.
\]
\end{small}
Consequently,
\begin{small}
$\E|\mathcal D_\ell^\delta|
\le C_T\delta^{p-2}R_0^2\sqrt{\tau\ell}.$
\end{small}
Combining this estimate, using $0\le V-V_\delta\le C\delta^p$
and substituting \eqref{eq-3.71} into \eqref{eq-3.71a} and taking $\lambda=\lambda_6$ yields
\begin{small}
\begin{equation}\label{eq-3.89}
\begin{aligned}
\E V(X_\ell^\eps)
&\le {\rm e}^{-\lambda_6\ell}U_0+C_T\Bigg[
\delta^p+
\left(1+\frac{R_0^2}{\delta^2}\right)
(\ell+\Delta)
\left(\Delta^{\alpha_p}+\frac{\eps}{\Delta}\right)R_0^p
+\delta^{p-2}R_0^2\sqrt{\tau\ell}
\Bigg].
\end{aligned}
\end{equation}
\end{small}
Set $\delta=\eta R_0$, $\eta:=\eps^{\beta_p/(p+2)}\vee\tau^{1/4}$ and
$\Delta:=\min\{1,T\}\eps^{1/(1+\alpha_p)}$.
Substituting into \eqref{eq-3.89} yields \eqref{eq-3.81}.

Now suppose Assumption~\ref{AS-3.5a} also holds.
By $\sigma(0,y)=0$ and Assumption~\ref{AS-3.1},
\begin{small}
\[
\|\sigma(x,y)\|_{\rm F}\le L|x|,
\qquad
\|a(x,y)-a(z,y)\|
\le C(|x|+|z|)|x-z|.
\]
\end{small}
Set $H_r:=X_r^\eps-X_{r-\tau}^\eps$.
Using the sharper derivative estimate
\eqref{eq-3.38}, we obtain
\begin{small}
\[
\begin{aligned}
|\mathcal D_\ell^\delta|
&\le C_T\int_0^\ell
\mathbf1_{\{|X_r^\eps|>\delta\}}
|X_r^\eps|^{p-2}
(|X_r^\eps|+|X_{r-\tau}^\eps|)
|H_r|\,{\rm d}r\\
&\le C_T\int_0^\ell
\mathbf1_{\{|X_r^\eps|>\delta\}}
\left(
|X_r^\eps|^{p-1}|H_r|
+|X_r^\eps|^{p-2}|H_r|^2
\right)\,{\rm d}r.
\end{aligned}
\]
\end{small}
Recall that
\begin{small}
$
\E\int_0^\ell|H_r|^2\,{\rm d}r
\le C_T\tau R_0^2,
\sup_{0\le r\le T}\E|X_r^\eps|^3
\le C_TR_0^3.
$
\end{small}
For $0<p\le1$, since $p-1\le0$,
H\"older's inequality yields
\begin{small}
$
\E|\mathcal D_\ell^\delta|
\le C_T\left(
\delta^{p-1}R_0\sqrt{\tau}
+\delta^{p-2}R_0^2\tau
\right).
$
\end{small}
Choose $\delta=\widehat\eta R_0$, where
\begin{small}
$
\widehat\eta:=
\eps^{\beta_p/(p+2)}\vee\tau^{1/2},
\Delta:=\min\{1,T\}\eps^{1/(1+\alpha_p)}.
$
\end{small}
Using $0\le V-V_\delta\le C\delta^p$
and substituting \eqref{eq-3.71} into \eqref{eq-3.71a} and taking $\lambda=\lambda_6$ proves \eqref{eq-3.81a}.

For $1<p<2$, we consider $\lambda<\lambda_6$, by Young inequality,
\begin{small}
\begin{equation}\label{1448}
\begin{aligned}
|\mathcal D_\ell^\delta|\leq C_T\int_0^\ell
\mathbf1_{\{|X_r^\eps|>\delta\}}
|X_r^\eps|^{p-1}|H_r|{\rm d}r
+\int_0^\ell\mathbf1_{\{|X_r^\eps|>\delta\}}|X_r^\eps|^{p-2}|H_r|^2{\rm d}r\\
\leq\varkappa\int_0^\ell \mathbf1_{\{|X_r^\eps|>\delta\}}
|X_r^\eps|^{p}{\rm d}r+C_{T,\varkappa}\int_0^\ell |H_r|^p{\rm d}r+C_T\delta^{p-2}\int_0^l |H_r|^2{\rm d}r.
\end{aligned}
\end{equation}\end{small}
By Lemma \ref{LM-3.8}, choose $\varkappa>0$ small enough such that
$\varkappa |x|^p\mathbf1_{|x|>\delta}\leq (\lambda_6-\lambda)V_\delta(x)+C\delta^p$. Substituting \eqref{eq-3.71}
and \eqref{1448} into \eqref{eq-3.71a}
then yields
\begin{small}
\begin{equation}
\begin{aligned}
{\rm e}^{\lambda\ell}\E V(X_\ell^\eps)
&\le U_0+C_T\Bigg[
\delta^p+
K_\delta
(\ell+\Delta)
\left(\Delta^{\alpha_p}+\frac{\eps}{\Delta}\right)R_0^p+\tau^{p/2}R_0^p+\delta^{p-2}\tau R_0^{2}\Bigg],
\end{aligned}
\end{equation}
\end{small}
where $K_\delta:=1+R_0^2/\delta^2$. Choosing $\delta=\widehat\eta R_0$, where
\begin{small}
$
\widehat\eta:=
\eps^{\beta_p/(p+2)}\vee\tau^{1/2},
\Delta:=\min\{1,T\}\eps^{1/(1+\alpha_p)}
$
\end{small}
 proves \eqref{eq-3.81a}.
\end{proof}

\begin{remark}
The cutoff radius is chosen adaptively as $\delta=\eta R_0$,
with $R_0$ determined by the initial data of the current
time block. A smaller $\eta$ reduces the cutoff error
but increases the averaging and delay errors.
Under the basic assumptions, the choice
$\eta=\eps^{\beta_p/(p+2)}\vee\tau^{1/4}$
balances these errors and yields the delay rate $\tau^{p/4}$.
Under the additional Assumption~\ref{AS-3.5a},
the sharper diffusion bound permits
$\eta=\eps^{\beta_p/(p+2)}\vee\tau^{1/2}$,
improving the delay rate to $\tau^{p/2}$.
Neither choice requires a relative scaling between
$\eps$ and $\tau$.
\end{remark}

\subsubsection{Iterative Extension, Parameter Selection, and Practical Exponential Stability}

In this subsection, we iteratively extend the finite-time
Lyapunov estimate to all $t\ge0$ and establish
practical exponential stability. Set
\begin{small}
\[
\Theta_{p}(\eps,\tau):=
\begin{cases}
\eps^{\nu_p}+\tau^{p/2},
& \text{if Assumption~\ref{AS-3.5a} also holds},\\
\eps^{\nu_p}+\tau^{p/4},
& \text{otherwise}.
\end{cases}
\]
\end{small}

\begin{theorem}\label{TH-3.14}
Suppose Assumptions~\ref{AS-3.1}--\ref{AS-3.6} hold.
For every $\lambda\in(0,\lambda_6)$, there exist
$\eps_*\in(0,1]$, $\tau_*>0$, and $C>0$
such that, for all $0<\eps\le\eps_*$
and $0<\tau\le\tau_*$, the solution of
\eqref{eq-1.1} satisfies
\begin{small}
\begin{equation}\label{eq-3.91}
\E|X_t^\eps|^p
\le C{\rm e}^{-\lambda t}
\bigl(\|\phi\|_\infty^p+|y|^p\bigr)
+C\Theta_p(\eps,\tau)^{1/2}
(r_0^p+\rho_d^p),\qquad t\ge0.
\end{equation}
\end{small}
\end{theorem}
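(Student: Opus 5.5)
The plan is to iterate the finite-time estimate of Lemma~\ref{LM-3.13} over consecutive blocks of length $T$, with $T$ fixed first and large, and then to choose $\eps_*,\tau_*$ small depending on $T$. Fix $\lambda<\lambda'<\lambda_6$. Choose $T$ so large that $\mathrm e^{-\lambda' T}$ beats every $T$-independent constant appearing below. The analysis works with the Markov-type state at block times $kT$, namely the segment of $X^\eps$ on $[kT-\tau,kT]$ together with $Y^\eps_{kT}$. By the strong Markov property of the pair (segment, fast state), Lemma~\ref{LM-3.13} applies conditionally on $\mathcal F_{kT}$ with $U_0,R_0$ replaced by $U_{kT},R_{kT}$.

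The difficulty is that Lemma~\ref{LM-3.13} controls $\E V(X_\ell)$ only at single times $\ell$. The next block, however, needs $U_{(k+1)T}=\sup$ over a window of length $\tau$, and it needs $R^p_{(k+1)T}$, which contains $|Y|^p$ and $r_0,\rho_d$. I therefore introduce the block functional
\[
\Phi_k:=\E U_{kT}+\kappa\,\E|Y^\eps_{kT}|^p .
\]
Here $\kappa>0$ is small, to be fixed later.

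\textbf{Controlling the sup over the window.} For $v\in[(k+1)T-\tau,(k+1)T]$, write $V(X_v)\le V(X_{(k+1)T-\tau})+|V(X_v)-V(X_{(k+1)T-\tau})|$. The first term is bounded by Lemma~\ref{LM-3.13} with $\ell=T-\tau\ge T/2$. The increment is bounded with \eqref{eq-3.35}/\eqref{eq-3.37} and \eqref{eq-3.59}, which give $C_T\tau^{p/2\wedge1/2}R_{kT}^p$ after H\"older. Since $R^p\le C(H^p+|Y|^p+r_0^p+\rho_d^p)$ and $H^p\le U/c_-$, this yields
\[
\E U_{(k+1)T}\le \bigl(\mathrm e^{-\lambda_6T/2}\cdot(\text{const})+C_T\Theta'\bigr)\E U_{kT}+C_T\Theta'\bigl(\E|Y_{kT}|^p+r_0^p+\rho_d^p\bigr),
\]
where $\Theta'=\Theta_p+\tau^{p/2\wedge1/2}\le C\Theta_p^{1/2}$, because $\tau^{1/2}\le\tau^{p/4}$ for $p<2$.

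For the fast part, \eqref{eq-3.60} applied conditionally gives $\E|Y_{(k+1)T}|^p\le C_T\E U_{kT}+d_\eps\E|Y_{kT}|^p+C_T(r_0^p+\rho_d^p)$. Take $\kappa=\mathrm e^{-\lambda'T}/(2C_T)$ and then $\eps,\tau$ small so that $C_T\Theta'$ and $d_\eps$ are tiny relative to $\kappa$. This gives $\Phi_{k+1}\le \mathrm e^{-\lambda T}\Phi_k+C_T\Theta'(r_0^p+\rho_d^p)$.

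\textbf{The main obstacle} is the $T$-independent factor in front of $\mathrm e^{-\lambda_6 T}$: Lemma~\ref{LM-3.13} has coefficient exactly $1$ on $U_0$, which is what makes this work. The other delicate point is that the inhomogeneous term $C_T\Theta'$ must not be multiplied by anything large; the $C_T$ is harmless because $T$ is fixed before $\eps,\tau$. If the plain $\Theta_p$ coefficient turns out to be insufficient for absorbing the $\kappa$-weighting, I would accept the loss to $\Theta_p^{1/2}$, which is exactly what the theorem claims. The $\Theta^{1/2}$ can arise by forcing $C_T\Theta_p\le\Theta_p^{1/2}$ for small parameters.

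\textbf{Conclusion.} Iterating the recursion gives
\[
\Phi_k\le \mathrm e^{-\lambda kT}\Phi_0+C\,\Theta_p^{1/2}(r_0^p+\rho_d^p)\sum_j \mathrm e^{-\lambda jT},
\]
with $\Phi_0\le c_+\|\phi\|_\infty^p+\kappa|y|^p$. For intermediate $t=kT+\ell$, apply Lemma~\ref{LM-3.13} once more from time $kT$, together with \eqref{eq-3.29}, to bound $\E|X_t|^p\le c_-^{-1}\E V(X_t)\le C\Phi_k+C\Theta_p(r_0^p+\rho_d^p)$. Absorbing $\mathrm e^{\lambda T}$ into $C$ yields \eqref{eq-3.91}. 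In the case without Assumption~\ref{AS-3.5a}, the rate $\lambda<\lambda_6$ comes from \eqref{eq-3.81}; with Assumption~\ref{AS-3.5a}, I use \eqref{eq-3.81a} with an intermediate rate between $\lambda$ and $\lambda_6$.
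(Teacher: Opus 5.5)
There is a genuine gap in your recursion for $\Phi_k$. When you multiply the fast-moment bound \eqref{eq-3.60} by $\kappa$, you get not only $\kappa C_T\E U_{kT}$ and $\kappa d_\eps\E|Y_{kT}|^p$, but also the forcing term $\kappa C_T(r_0^p+\rho_d^p)$, which you dropped. With your choice $\kappa=\mathrm e^{-\lambda'T}/(2C_T)$, this term equals $\tfrac12\mathrm e^{-\lambda'T}(r_0^p+\rho_d^p)$. It does not vanish as $\eps,\tau\to0$. The correct recursion is
\[
\Phi_{k+1}\le \mathrm e^{-\lambda T}\Phi_k+\bigl(C_T\Theta'+\kappa C_T\bigr)(r_0^p+\rho_d^p).
\]
Iterating it and using $\E U_{kT}\le\Phi_k$ in your last step yields only $\E|X_t^\eps|^p\le C\mathrm e^{-\lambda t}(\|\phi\|_\infty^p+|y|^p)+C(r_0^p+\rho_d^p)$. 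That residual is $O(1)$, not the $\Theta_p^{1/2}$ residual of \eqref{eq-3.91}. Your fallback of accepting a loss to $\Theta_p^{1/2}$ ``by forcing $C_T\Theta_p\le\Theta_p^{1/2}$'' does not help, because the offending term carries no factor of $\Theta_p$ at all.

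The missing idea is that the weight $\kappa$ must itself shrink with the parameters. The paper takes $\kappa:=\Theta^{1/2}$ with $\Theta=\Theta_p(\eps,\tau)$, and this choice meets three requirements at once:
\begin{itemize}
\item $C_T\kappa$ is small, so the coefficient of $U_s$ still contracts below $\rho=\mathrm e^{-\lambda T}$.
\item $\Theta/\kappa=\kappa$ is small, so the term $C\Theta|Y_s|^p$ from the $U$-recursion is absorbed, together with $d_\eps$, into $\kappa|Y_s|^p$. In the final step it also gives $C_T\Theta R_{t_k}^p\le C_T(1+\Theta/\kappa)\mathcal W_{t_k}+C_T\Theta(r_0^p+\rho_d^p)$ with a bounded coefficient.
\item The forcing becomes $C(\Theta+\kappa)(r_0^p+\rho_d^p)=O(\Theta^{1/2})$, which is exactly the residual in the statement.
\end{itemize}
Alternatively, you can keep $\kappa$ fixed, use the $\Phi$-recursion only to bound $\E|Y_{kT}|^p\le\kappa^{-1}\Phi_k$, and feed this back into the $U$-recursion, whose forcing coefficient is $C_T\Theta'$. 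Either way, some such step is needed.

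A smaller point: your displayed $U$-recursion uses the crude factor $\mathrm e^{-\lambda_6T/2}$ obtained from $\ell\ge T/2$. That factor cannot beat $\mathrm e^{-\lambda T}$ when $\lambda\ge\lambda_6/2$. You must keep $\mathrm e^{-\lambda_6(T-\tau)}$ (or $\mathrm e^{-\lambda'(T-\tau)}$ under Assumption~\ref{AS-3.5a}) and use $\tau\le\tau_*$.
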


\begin{proof}
Fix $0<\lambda<\lambda'<\lambda_6$ and choose $T>0$. Recall the definition of \begin{small}$U_s,H_s,R_s$\end{small} in \eqref{def}.
By \eqref{eq-3.29},
\begin{small}
$
R_s^p\le C\bigl(U_s+|Y_s^\eps|^p
+r_0^p+\rho_d^p\bigr).
$
\end{small}
The finite-time estimates remain valid conditionally
on $\mathcal F_s$, with $U_s$ and $R_s$
as the initial quantities.
Set $\Theta:=\Theta_p(\eps,\tau)$ and
$\rho:={\rm e}^{-\lambda T}$.
By Lemma~\ref{LM-3.13}, applied conditionally
on $\mathcal F_s$ with $\ell=T-\tau$,
\begin{small}
\begin{equation}\label{eq-3.92}
\E[V(X_{s+T-\tau}^\eps)\mid\mathcal F_s]
\le {\rm e}^{-\lambda'(T-\tau)}U_s
+C_T\Theta R_s^p.
\end{equation}
\end{small}
For any $\zeta>0$, Lemma~\ref{LM-3.8},
Young's inequality, and \eqref{eq-3.29} imply for all  $x,z\in\mathbb R^d$,
\begin{small}
$V(x)\le(1+\zeta)V(z)+C_\zeta|x-z|^p$.
\end{small}
 Consequently,
\begin{small}
\begin{equation}\label{eq-3.93}
U_{s+T}\le(1+\zeta)V(X_{s+T-\tau}^\eps)
+C_\zeta\sup_{s+T-\tau\le v\le s+T}
|X_v^\eps-X_{s+T-\tau}^\eps|^p.
\end{equation}
\end{small}
By the conditional version of \eqref{eq-3.59}
and H\"older's inequality,
\begin{small}
\begin{equation}\label{eq-3.94}
\E\left[
\sup_{s+T-\tau\le v\le s+T}
|X_v^\eps-X_{s+T-\tau}^\eps|^p
\,\middle|\,\mathcal F_s\right]
\le C_T\tau^{p/2}R_s^p.
\end{equation}
\end{small}
Since $\tau^{p/2}\le\Theta$ under either
set of assumptions, \eqref{eq-3.92}--\eqref{eq-3.94}
yield
\begin{small}
\begin{equation}\label{eq-3.95}
\E[U_{s+T}\mid\mathcal F_s]
\le\bigl((1+\zeta){\rm e}^{-\lambda(T-\tau)}
+C_{T,\zeta}\Theta\bigr)U_s+C_{T,\zeta}\Theta
\bigl(|Y_s^\eps|^p+r_0^p+\rho_d^p\bigr).
\end{equation}
\end{small}
Likewise, the conditional version of
\eqref{eq-3.60} gives
\begin{small}
\begin{equation}\label{eq-3.96}
\E[|Y_{s+T}^\eps|^p\mid\mathcal F_s]
\le C_TU_s+d_\eps|Y_s^\eps|^p
+C_T(r_0^p+\rho_d^p).
\end{equation}
\end{small}

Set $\kappa:=\Theta^{1/2}$ and define
\begin{small}
$
\mathcal W_s:=U_s+\kappa|Y_s^\eps|^p.
$
\end{small}
Combining \eqref{eq-3.95} and
\eqref{eq-3.96} yields
\begin{small}
\begin{equation}\label{eq-3.98}
\begin{aligned}
\E[\mathcal W_{s+T}\mid\mathcal F_s]
&\le\bigl((1+\zeta){\rm e}^{-\lambda'(T-\tau)}
+C_{T,\zeta}\Theta+C_T\kappa\bigr)U_s\\
&\quad+\bigl(C_{T,\zeta}\Theta/\kappa+d_\eps\bigr)
\kappa|Y_s^\eps|^p+C_{T,\zeta}(\Theta+\kappa)
(r_0^p+\rho_d^p).
\end{aligned}
\end{equation}
\end{small}
Since $\lambda<\lambda'$, we have
${\rm e}^{-\lambda' T}<\rho$.
Choose $\zeta>0$ sufficiently small so that
\begin{small}
$
(1+\zeta){\rm e}^{-\lambda' T}<\rho.
$
\end{small}
Choose $\eps_*$,$\tau_*$
sufficiently small such that
$\Theta_*:=\Theta_p(\eps_*,\tau_*)$ satisfy
\begin{small}
\begin{equation}\label{eq-3.99}
\Theta_*^{1/2}\le1,
(1+\zeta){\rm e}^{-\lambda'(T-\tau_*)}
+C_{T,\zeta}(\Theta_*+\Theta_*^{1/2})\le\rho,
C_{T,\zeta}\Theta_*^{1/2}+d_{\eps_*}\le\rho.
\end{equation}
\end{small}
For all $0<\eps\le\eps_*$ and
$0<\tau\le\tau_*$, we have
$\Theta\le\Theta_*$,
$\kappa=\Theta^{1/2}\le1$,
$\Theta/\kappa=\kappa$, and
$d_\eps\le d_{\eps_*}$.
Hence both state coefficients in
\eqref{eq-3.98} are bounded by $\rho$, yielding
\begin{small}
\begin{equation}\label{eq-3.100}
\E[\mathcal W_{s+T}\mid\mathcal F_s]
\le\rho\mathcal W_s+
C_{T,\zeta}\Theta^{1/2}(r_0^p+\rho_d^p).
\end{equation}
\end{small}

Let $t_k:=kT$. Taking expectations
and iterating \eqref{eq-3.100} gives
\begin{small}
\begin{equation}\label{eq-3.101}
\E\mathcal W_{t_k}
\le\rho^k\mathcal W_0+
\frac{C_{T,\zeta}\Theta^{1/2}}{1-\rho}
(r_0^p+\rho_d^p).
\end{equation}
\end{small}
Moreover, \eqref{eq-3.29} and
the definition of $R_s$ imply
\begin{small}
$
\mathcal W_0\le
C(\|\phi\|_\infty^p+|y|^p),
R_{t_k}^p\le
C\kappa^{-1}\mathcal W_{t_k}
+C(r_0^p+\rho_d^p).
$
\end{small}
For  $t=t_k+\ell$, $\ell\in[0,T]$,
conditional version of Lemma~\ref{LM-3.13} yields
\begin{small}
\[
\begin{aligned}
\E[|X_t^\eps|^p\mid\mathcal F_{t_k}]
&\le C_TU_{t_k}+C_T\Theta R_{t_k}^p\\
&\le C_T(1+\Theta/\kappa)\mathcal W_{t_k}
+C_T\Theta(r_0^p+\rho_d^p)\le C_T\mathcal W_{t_k}
+C_T\Theta^{1/2}(r_0^p+\rho_d^p).
\end{aligned}
\]
\end{small}
Taking expectations and using
\eqref{eq-3.101} yields 
\begin{small}
\[
\E|X_t^\eps|^p
\le C_{T,\zeta}\rho^k
(\|\phi\|_\infty^p+|y|^p)
+C_{T,\zeta}\Theta^{1/2}(r_0^p+\rho_d^p).
\]
\end{small}
Finally, since $t_k\le t<t_k+T$,
\begin{small}
$
\rho^k={\rm e}^{-\lambda t_k}
\le{\rm e}^{\lambda T}{\rm e}^{-\lambda t}.
$
\end{small}
This proves \eqref{eq-3.91}.
\end{proof}

\subsection{Exponential stability}

\subsubsection{Case I: Homogeneous Fast Dissipation}

We first establish exponential stability under
a homogeneous dissipativity condition for the fast dynamics.

\begin{assumption}\label{AS-3.15}
The dissipativity condition in Assumption~\ref{AS-3.2}
holds with $\rho_d=0$. Moreover, $r_0=0$, that is,
$b(0,0)=\sigma(0,0)=0$. Under these conditions,
the dissipativity inequality also implies
$f(0,0)=g(0,0)=0$.
\end{assumption}

\begin{theorem}\label{TH-3.16}
Suppose Assumptions~\ref{AS-3.1}--\ref{AS-3.6}
and \ref{AS-3.15} hold.
For every $\lambda\in(0,\lambda_6)$, there exist
$\eps_*,\tau_*,C>0$
such that, for all $\eps\in(0,\eps_*]$
and $\tau\in(0,\tau_*]$,
\begin{small}
\begin{align}
\E|X_t^\eps|^p
\le& C{\rm e}^{-\lambda t}
\left(\|\phi\|_\infty^p+|y|^p\right),
\qquad t\ge0,
\label{eq-3.105}\\
\limsup_{t\to\infty}&\frac1t\log|X_t^\eps|
\le-\frac{\lambda}{p}
\qquad\text{a.s.}
\label{eq-3.106}
\end{align}
\end{small}

\end{theorem}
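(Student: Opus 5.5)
With $r_0=\rho_d=0$ from Assumption~\ref{AS-3.15}, the moment bound \eqref{eq-3.105} is exactly Theorem~\ref{TH-3.14} with its additive term removed, because that term is $C\Theta_p(\eps,\tau)^{1/2}(r_0^p+\rho_d^p)$. The plan is therefore to check that the proof of Theorem~\ref{TH-3.14} carries through verbatim with $r_0=\rho_d=0$, and then to obtain the almost sure statement \eqref{eq-3.106} by a standard Borel--Cantelli argument on blocks of length $T$.

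\textbf{Step 1: the $p$th moment bound.} None of the constants in Lemmas~\ref{LM-3.3}--\ref{LM-3.13} depends on $r_0$ or $\rho_d$, except through the explicit factors $r_0+\rho_d$ in the bounds. So with $r_0=\rho_d=0$:
\begin{enumerate}
\item[(a)] $R_s^3=H_s^3+|Y_s^\eps|^3$;
\item[(b)] \eqref{eq-3.95} and \eqref{eq-3.96} hold without their inhomogeneous terms;
\item[(c)] \eqref{eq-3.100} becomes $\E[\mathcal W_{s+T}\mid\mathcal F_s]\le\rho\mathcal W_s$.
\end{enumerate}
Iterating (c) gives $\E\mathcal W_{t_k}\le\rho^k\mathcal W_0$. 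For $t=t_k+\ell$ with $\ell\in[0,T]$, the conditional form of Lemma~\ref{LM-3.13} gives $\E[|X_t^\eps|^p\mid\mathcal F_{t_k}]\le C_T\mathcal W_{t_k}$. Combined with $\rho^k\le {\rm e}^{\lambda T}{\rm e}^{-\lambda t}$, this yields \eqref{eq-3.105}. The case $R_s=0$ is harmless, since the solution then stays at zero.

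\textbf{Step 2: a bound on the supremum over each block.} For the almost sure statement I need control of $\E\sup_{t_k\le t\le t_{k+1}}|X_t^\eps|^p$, not just of $\E|X_t^\eps|^p$. The conditional version of \eqref{eq-3.57} with $r_0=\rho_d=0$ gives
\[
\E\Bigl[\sup_{t_k-\tau\le v\le t_{k+1}}|X_v^\eps|^3\,\Big|\,\mathcal F_{t_k}\Bigr]\le C_T\bigl(H_{t_k}^3+\eps|Y_{t_k}^\eps|^3\bigr).
\]
Since $p<3$, Jensen's inequality then gives
\[
\E\Bigl[\sup_{[t_k,t_{k+1}]}|X^\eps|^p\,\Big|\,\mathcal F_{t_k}\Bigr]\le C_T\bigl(H_{t_k}^p+|Y_{t_k}^\eps|^p\bigr)\le C_T\kappa^{-1}\mathcal W_{t_k}.
\]
Taking expectations and using $\E\mathcal W_{t_k}\le\rho^k\mathcal W_0$ gives
\[
\E\sup_{[t_k,t_{k+1}]}|X^\eps|^p\le C\,{\rm e}^{-\lambda kT}\bigl(\|\phi\|_\infty^p+|y|^p\bigr),
\]
where $\kappa$ is now a fixed positive constant depending on $\eps,\tau$. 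This step is the main point of care. The difficulty is that the fast component enters $R_{t_k}$ and must be controlled through $\mathcal W$ rather than through $U$ alone. The $\kappa^{-1}$ factor is harmless because $\eps$ and $\tau$ are fixed.

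\textbf{Step 3: Borel--Cantelli.} Fix $\gamma\in(0,\lambda)$. Chebyshev's inequality gives
\[
\Pp\Bigl(\sup_{[t_k,t_{k+1}]}|X^\eps|^p>{\rm e}^{-(\lambda-\gamma)kT}\Bigr)\le C{\rm e}^{-\gamma kT},
\]
which is summable in $k$. By the Borel--Cantelli lemma, almost surely for all large $k$ and all $t\in[t_k,t_{k+1}]$,
\[
\frac{\log|X_t^\eps|}{t}\le-\frac{(\lambda-\gamma)kT}{p(k+1)T}.
\]
Letting $k\to\infty$ and then $\gamma\downarrow0$ gives $\limsup_{t\to\infty}t^{-1}\log|X_t^\eps|\le-(\lambda-\gamma)/p$ for every $\gamma$. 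Since $\lambda<\lambda_6$ is arbitrary, one can also apply the argument with a slightly larger $\lambda$ to obtain exactly $-\lambda/p$ in \eqref{eq-3.106}.
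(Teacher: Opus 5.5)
Your proposal is correct and follows the paper's proof essentially step by step. The moment bound is Theorem~\ref{TH-3.14} with $r_0=\rho_d=0$, and the almost sure bound comes from the conditional version of \eqref{eq-3.57}, H\"older's inequality to control the block supremum by $\mathcal W_{t_k}$, and Markov plus Borel--Cantelli. The only cosmetic difference is the threshold: you take ${\rm e}^{-(\lambda-\gamma)kT}$ and let $\gamma\downarrow0$ along a countable sequence, whereas the paper runs Theorem~\ref{TH-3.14} at a slightly larger rate $\lambda'$ and picks $\gamma\in(\lambda,\lambda')$; both routes give exactly $-\lambda/p$.
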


\begin{proof}
Under Assumption~\ref{AS-3.15}, $r_0=\rho_d=0$.
Thus \eqref{eq-3.105} follows directly from
Theorem~\ref{TH-3.14}. To prove \eqref{eq-3.106}, fix
$\lambda<\lambda'<\lambda_6$ and choose
$\eps_*,\tau_*$ according to
Theorem~\ref{TH-3.14} for the rate $\lambda'$.
Fix $T>0$ and let $t_k:=kT$.
By the conditional version of \eqref{eq-3.57},
H\"older's inequality, and \eqref{eq-3.101},
$$
\E\sup_{t_k\le t\le t_{k+1}}|X_t^\eps|^p
\le C_T\E R_{t_k}^p
\le C_T\E\mathcal W_{t_k}
\le C_T{\rm e}^{-\lambda't_k}
\bigl(\|\phi\|_\infty^p+|y|^p\bigr),
$$
where $C_T$ may depend on $\eps$ and $\tau$.
For $\lambda<\gamma<\lambda'$, Markov's inequality gives
\begin{small}$$
\sum_{k=1}^{+\infty}\mathbb P\left(
\sup_{t_k\le t\le t_{k+1}}|X_t^\eps|
\ge{\rm e}^{-\gamma t_k/p}\right)\le \sum_{k=1}^{+\infty}C_T
\bigl(\|\phi\|_\infty^p+|y|^p\bigr)
{\rm e}^{-(\lambda'-\gamma)kT}<+\infty.
$$\end{small}
By the Borel--Cantelli lemma, almost surely there exists $k_0(\omega)$ such that for $k>k_0(\omega)$,
\begin{small}
$
\sup_{t_k\le t\le t_{k+1}}|X_t^\eps|
<{\rm e}^{-\gamma t_k/p}.
$
\end{small}
Since $t_k/t\to1$ and $\gamma>\lambda$,
this proves \eqref{eq-3.106}.
\end{proof}

Under homogeneous fast dissipation, the invariant
measure of the frozen fast dynamics satisfies
$\int |y|^2\mu_x({\rm d}y)\le C|x|^2$.
Thus, the fast component may retain stochastic
fluctuations for $x\ne0$, but both its stationary
mean and fluctuations are of order $O(|x|)$.
The following linear example illustrates this
behavior explicitly.

\begin{example}
Consider the scalar slow--fast system
\begin{equation}\label{eq-3-CaseI}
\begin{cases}
{\rm d}X_t^\eps=X_t^\eps\,{\rm d}t
+(2X_{t-\tau}^\eps+Y_t^\eps)\,{\rm d}B_t^1,\\
{\rm d}Y_t^\eps=\eps^{-1}(X_t^\eps-Y_t^\eps)\,{\rm d}t
+\dfrac{Y_t^\eps}{2\sqrt{\eps}}\,{\rm d}B_t^2.
\end{cases}
\end{equation}
Assumptions~\ref{AS-3.1}, \ref{AS-3.2},
and \ref{AS-3.15} hold with
$L=3$, $L_f=1$, $c_1=1$, $c_2=2$,
$\lambda_1=3/2$, $\vartheta=1$, and $r_0=\rho_d=0$.
For each frozen $x$, the fast equation has invariant
distribution $\mu_x=\mathcal L(8x/G)$, where
$G\sim\operatorname{Gamma}(9,1)$. This formula gives $\mu_0=\delta_0$.
In particular,
$
\int_{\mathbb R}y\,\mu_x({\rm d}y)=x,~
\int_{\mathbb R}y^2\,\mu_x({\rm d}y)~
=\frac87x^2,
\operatorname{Var}_{\mu_x}(Y)=\frac17x^2.
$
Thus, the frozen fast component retains nontrivial
random fluctuations for $x\ne0$, but their standard
deviation is $|x|/\sqrt{7}$ and vanishes as $x\to0$.
The averaged coefficients are
$\bar b(x)=x$ and $\bar a(x)=\frac{64}{7}x^2$.
Taking $V(x)=|x|^{1/2}$, we obtain
\begin{small}
$
\widetilde{\mathscr L}V(x)
=xV'(x)+\frac{32}{7}x^2V''(x)
=-\frac9{14}V(x),\mbox{for}~x\ne0.
$
\end{small}
Hence Assumptions~\ref{AS-3.5}--\ref{AS-3.6}
also hold with $p=1/2$ and $\lambda_6=9/14$.
By Theorem~\ref{TH-3.16}, for sufficiently small
$\eps$ and $\tau$, the slow component is exponentially
stable in the moment of order $1/2$ and almost surely.
Notably, multiplicative noise stabilizes the slow
dynamics despite the unstable averaged drift.
\end{example}

\subsubsection{Case II: Practical Fast Dissipation}
\label{SUBSUBSEC-3.2.2}

We next establish exponential stability under a different
mechanism. The averaged system still satisfies the
small-$p$ Lyapunov condition in Assumption~\ref{AS-3.6},
but the fast dissipativity may contain a nonzero residual
$\rho_d^2+|x|^2$. To compensate for this, we require
the sensitivity of the slow coefficients to the fast state
to vanish at the slow equilibrium.

\begin{assumption}\label{AS-3.17}
There exists $L>0$ such that, for all
$x,z\in\R^d$ and $y,v\in\R^m$,
\begin{small}
\begin{align}
|b(x,y)-b(z,y)|
+\normF{\sigma(x,y)-\sigma(z,y)}
&\le L|x-z|,\label{eq-3.108}\\
b(0,y)=0,\qquad \sigma(0,y)&=0,\label{eq-3.109}\\
|b(x,y)-b(x,v)|
+\normF{\sigma(x,y)-\sigma(x,v)}
&\le L(|x|\wedge1)|y-v|.\label{eq-3.110}
\end{align}
\end{small}
\end{assumption}

By \eqref{eq-3.108}--\eqref{eq-3.109}, we obtain
$
|b(x,y)|+\normF{\sigma(x,y)}\le L|x|.
$ Furthermore,
\begin{small}\begin{align}
\|a(x,y)\|&\le C|x|^2,
\|a(x,y)-a(z,y)\|
\le C(|x|+|z|)|x-z|,\label{eq-3.113}\\
&\|a(x,y)-a(x,v)\|
\le C|x|(|x|\wedge1)|y-v|.\label{eq-3.114} 
\end{align}\end{small}

\begin{assumption}\label{AS-3.18}
The local Lipschitz condition in
Assumption~\ref{AS-3.2}, together with
\eqref{eq-3.7} and \eqref{eq-3.9}, remains
in force. In place of \eqref{eq-3.8},
there exist $c_1,c_2>0$ and $\rho_d\ge0$
such that, for all $x\in\R^d$ and $y\in\R^m$,
\begin{small}
\begin{equation}\label{eq-3.115}
2\langle y,f(x,y)\rangle+\normF{g(x,y)}^2
\le-c_1|y|^2+c_2(\rho_d^2+|x|^2).
\end{equation}
\end{small}
\end{assumption}

Assumption~\ref{AS-3.17} implies
Assumption~\ref{AS-3.1} with $r_0=0$.
The weaker dissipativity condition
\eqref{eq-3.115}, together with
\eqref{eq-3.7} and \eqref{eq-3.9},
still yields the frozen fast process's
second-moment bounds, contraction,
unique invariant measure $\mu_x$,
and exponential mixing estimates by
the same It\^o and coupling arguments.

Unlike Case I, no third-moment estimate is
needed. Indeed, the general growth bounds may
produce products of quadratic fast-variable terms
and slow increments in the freezing error.
Here, \eqref{eq-3.108}--\eqref{eq-3.114}
remove this dependence from the slow-variable
difference estimates, while the remaining
fast-variable errors can be controlled using
second moments.

The averaged coefficients $\bar b$, $\bar a$ and
the generator $\widetilde{\mathscr L}$ are defined
as in Subsection~\ref{2238}.
By the invariant-measure coupling argument
used in Lemma~\ref{LM-3.4}, together with
\eqref{eq-3.108}--\eqref{eq-3.114}, we obtain
\begin{small}
\begin{align}
|\bar b(x)|&\le C|x|,\qquad
\|\bar a(x)\|\le C|x|^2,\label{eq-3.116}\\
|\bar b(x)-\bar b(z)|&\le C|x-z|,\qquad
\|\bar a(x)-\bar a(z)\|
\le C(|x|+|z|)|x-z|.\label{eq-3.117}
\end{align}
\end{small} In particular, \eqref{eq-3.109} implies
$\bar b(0)=\bar a(0)=0$, so Assumption~\ref{AS-3.5}
holds automatically. We continue to impose
the small-$p$ Lyapunov condition in
Assumption~\ref{AS-3.6}.

We retain the cutoff function $V_\delta$ and the
fluctuation $\Psi_\delta$ defined previously.
All estimates in Lemma~\ref{LM-3.8}, together with
\eqref{eq-3.45}, remain valid.

Define $q(r):=r/(1+r)$ for $r\ge0$.
Then
\begin{small}
\begin{equation}\label{eq-3.120}
\frac1{\sqrt2}(|x|\wedge1)
\le\sqrt{q(|x|^2)}
\le |x|\wedge1.
\end{equation}
\end{small}

\begin{lemma}\label{LM-3.19}
Under Assumptions~\ref{AS-3.17}--\ref{AS-3.18},
there exists $C>0$, independent of $\delta$ and $y$,
such that, for every $\delta>0$,
\begin{small}
\begin{equation}\label{eq-3.121}
|\Psi_\delta(x,y)-\Psi_\delta(z,y)|
\le C
\begin{cases}
|x-z|^p,&0<p\le1,\\
(|x|^{p-1}+|z|^{p-1})|x-z|,&1<p<2.
\end{cases}
\end{equation}
\end{small}
Moreover, for every $0<p<2$,
\begin{small}
\begin{equation}\label{eq-3.122}
|\Psi_\delta(x,y)-\Psi_\delta(x,v)|
\le C\mathbf1_{\{|x|>\delta\}}
|x|^{p-1}\sqrt{q(|x|^2)}|y-v|.
\end{equation}
\end{small}
\end{lemma}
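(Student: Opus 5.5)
The plan follows the pattern of Lemma~\ref{LM-3.9}. The only change is that under Assumption~\ref{AS-3.17} all coefficient bounds become homogeneous in $x$. This removes both the factor $\Lambda_\delta(y)$ and any dependence on $y$.

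\textbf{Pointwise bounds.} By \eqref{eq-3.108}--\eqref{eq-3.109}, \eqref{eq-3.113} and \eqref{eq-3.116}--\eqref{eq-3.117}, uniformly in $y$:
\begin{itemize}
\item $|b(x,y)-\bar b(x)|\le C|x|$;
\item $\|a(x,y)-\bar a(x)\|\le C|x|^2$;
\item the differences in $x$ are bounded by $C|x-z|$ and $C(|x|+|z|)|x-z|$, respectively.
\end{itemize}
Combining these with \eqref{eq-3.38} gives
\[
|\Psi_\delta(x,y)|\le C\mathbf1_{\{|x|>\delta\}}\bigl(|x|\,|x|^{p-1}+|x|^2|x|^{p-2}\bigr)\le C|x|^p .
\]

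\textbf{Proof of \eqref{eq-3.121}.} Set $d_0=|x-z|$ and $R=|x|\vee|z|$, and split into two cases as in Lemma~\ref{LM-3.9}.

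If $R\le 2d_0$, the pointwise bound gives $|\Psi_\delta(x,y)-\Psi_\delta(z,y)|\le CR^p\le Cd_0^p$. For $1<p<2$ this is at most $C(|x|^{p-1}+|z|^{p-1})d_0$, since $R^{p-1}\le |x|^{p-1}+|z|^{p-1}$ and $R\le 2d_0$.

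If $R>2d_0$, then every point $\xi\in[x,z]$ satisfies $|\xi|>R/2$. We may assume $R>\delta$, since otherwise both terms vanish. Write the difference as
\[
\langle \Delta b, DV_\delta(x)\rangle+\langle b(z,y)-\bar b(z), DV_\delta(x)-DV_\delta(z)\rangle
\]
plus the analogous trace terms, as in the proof of Lemma~\ref{LM-3.12}. Apply the mean value theorem with \eqref{eq-3.38}, so that $\|D^jV_\delta(\xi)\|\le C R^{p-j}$. Each of the four terms is then bounded by $Cd_0R^{p-1}$:
\[
d_0R^{p-1},\qquad R\,d_0R^{p-2},\qquad R\,d_0R^{p-2},\qquad R^2d_0R^{p-3}.
\]
For $p\le1$, $d_0R^{p-1}\le d_0^p$ because $R>d_0$ and $p-1\le 0$. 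For $1<p<2$, $R^{p-1}\le |x|^{p-1}+|z|^{p-1}$.

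\textbf{Proof of \eqref{eq-3.122}.} The averaged terms cancel, and
\[
\Psi_\delta(x,y)-\Psi_\delta(x,v)=\langle b(x,y)-b(x,v),DV_\delta(x)\rangle+\tfrac12\operatorname{tr}\bigl((a(x,y)-a(x,v))D^2V_\delta(x)\bigr).
\]
By \eqref{eq-3.110}, \eqref{eq-3.114} and \eqref{eq-3.38}, this is at most
\[
C\mathbf1_{\{|x|>\delta\}}\bigl[(|x|\wedge1)|x|^{p-1}+|x|(|x|\wedge1)|x|^{p-2}\bigr]|y-v| .
\]
The right-hand inequality $(|x|\wedge1)\le\sqrt2\sqrt{q(|x|^2)}$ in \eqref{eq-3.120} then gives \eqref{eq-3.122}.

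\textbf{Main obstacle.} The delicate step is the case $R>2d_0$ when the segment $[x,z]$ crosses the transition annulus $\delta<|\xi|<2\delta$. There one must use that \eqref{eq-3.38} holds with a constant independent of $\delta$, and must not use the cruder bound \eqref{eq-3.40}. This is what keeps the estimate free of $\delta^{-1}$ factors.
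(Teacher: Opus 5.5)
Your proposal is correct and follows the paper's proof essentially step for step. The steps match: the homogeneous pointwise bound $|\Psi_\delta(x,y)|\le C|x|^p$, the split into $R\le 2d_0$ and $R>2d_0$ with the mean value theorem along a segment lying in $\{|\xi|>R/2\}$, and the cancellation of the averaged terms for \eqref{eq-3.122}. One trivial slip: the inequality $(|x|\wedge1)\le\sqrt2\,\sqrt{q(|x|^2)}$ that you need is the left-hand inequality in \eqref{eq-3.120}, not the right-hand one.
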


\begin{proof}
By \eqref{eq-3.108}--\eqref{eq-3.113},
\eqref{eq-3.116}--\eqref{eq-3.117},
and the derivative bounds \eqref{eq-3.38},
\begin{small}
\begin{equation}\label{eq-3.123}
|\Psi_\delta(x,y)|\le C|x|^p.
\end{equation}
\end{small}
Moreover, the same coefficient bounds and
the first three derivative estimates for $V_\delta$
yield, whenever $|x|\vee|z|\le2(|x|\wedge|z|)$,
\begin{small}
$
|\Psi_\delta(x,y)-\Psi_\delta(z,y)|
\le C(|x|\vee|z|)^{p-1}|x-z|.
$
\end{small}
Indeed, this follows by separating the differences
of the coefficients and those of $DV_\delta$
and $D^2V_\delta$, using
$|b(x,y)-\bar b(x)|\le C|x|$ and
$\|a(x,y)-\bar a(x)\|\le C|x|^2$.

Set $d:=|x-z|$ and $R:=|x|\vee|z|$.
If $R\le2d$, \eqref{eq-3.123} gives
\begin{small}
\[
|\Psi_\delta(x,y)-\Psi_\delta(z,y)|
\le CR^p\le Cd^p.
\]
\end{small}
If $R>2d$, then $|x|\wedge|z|\ge R/2$,
so the preceding local estimate yields
\begin{small}
\[
|\Psi_\delta(x,y)-\Psi_\delta(z,y)|
\le CR^{p-1}d.
\]
\end{small}
For $0<p\le1$, this is bounded by $Cd^p$,
while for $1<p<2$ it is bounded by
$C(|x|^{p-1}+|z|^{p-1})d$.
The latter bound also holds when $R\le2d$.
This proves \eqref{eq-3.121}.

Finally, the averaged terms cancel in
$\Psi_\delta(x,y)-\Psi_\delta(x,v)$.
Using \eqref{eq-3.38}, \eqref{eq-3.110},
and \eqref{eq-3.114}, we obtain
\begin{small}
\[
\begin{aligned}
|\Psi_\delta(x,y)-\Psi_\delta(x,v)|
&\le C\mathbf1_{\{|x|>\delta\}}
\left(|x|^{p-1}+|x|^{p-1}\right)
(|x|\wedge1)|y-v|\\
&\le C\mathbf1_{\{|x|>\delta\}}
|x|^{p-1}\sqrt{q(|x|^2)}|y-v|,
\end{aligned}
\]
\end{small}
where \eqref{eq-3.120} was used.
This proves \eqref{eq-3.122}.
\end{proof}

We retain the notation $H_t$ and $U_t$ introduced in \eqref{def}.
The following estimates follow by the same
argument as in
Lemma~\ref{LM-3.10}.

\begin{lemma}\label{LM-3.20}
For $T>0$, there exists $C_T>0$
independent of $\eps$,$\tau$, such that
\begin{small}
\begin{align}
\E\sup_{-\tau\le v\le t}|X_v^\eps|^2
&\le C_TH_0^2,\qquad 0\le t\le T,
\label{eq-3.126}\\
\E\sup_{r\le v\le r+u}|X_v^\eps-X_r^\eps|^2
&\le C_TuH_0^2,\qquad
0\le r\le r+u\le T,\quad 0\le u\le1.
\label{eq-3.127}
\end{align}
\end{small}
\end{lemma}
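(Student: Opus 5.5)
The plan is to follow the proof of Lemma~\ref{LM-3.10}, but to exploit the structure of Assumption~\ref{AS-3.17} so that only second moments appear and the fast variable drops out. The key observation is that \eqref{eq-3.108}--\eqref{eq-3.109} give $|b(x,y)|+\normF{\sigma(x,y)}\le L|x|$ uniformly in $y$. The slow equation therefore admits a closed bound in terms of $X^\eps$ alone. This is why the right-hand sides involve only $H_0^2$, with no $|Y_0^\eps|$, $r_0$ or $\rho_d$ terms, and why Assumption~\ref{AS-3.18} is not actually needed here beyond guaranteeing a global solution.

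For \eqref{eq-3.126}, I write $X_v^\eps=\phi(0)+\int_0^v b(X_r^\eps,Y_r^\eps)\,{\rm d}r+\int_0^v\sigma(X_{r-\tau}^\eps,Y_r^\eps)\,{\rm d}B_r^1$ for $0\le v\le t$. Set $G(t):=\E\sup_{-\tau\le v\le t}|X_v^\eps|^2$. Using $(a+b+c)^2\le3(a^2+b^2+c^2)$, Cauchy--Schwarz on the drift and the BDG (Doob) inequality on the martingale, I obtain
\[
G(t)\le 3H_0^2+3tL^2\int_0^t\E|X_r^\eps|^2\,{\rm d}r+12L^2\int_0^t\E|X_{r-\tau}^\eps|^2\,{\rm d}r
\le 3H_0^2+C_T\int_0^t G(r)\,{\rm d}r.
\]
Here $\E|X_{r-\tau}^\eps|^2\le G(r)$, since $r-\tau\in[-\tau,r]$, and the initial segment is bounded by $H_0^2$. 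A localization by the stopping times $\inf\{t:|X_t^\eps|\ge n\}$ makes $G$ finite before Gronwall is applied; I then let $n\to\infty$ by Fatou. Gronwall then gives $G(t)\le 3{\rm e}^{C_TT}H_0^2=:C_TH_0^2$, with $C_T$ independent of $\eps,\tau$ because no fast quantity enters.

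For \eqref{eq-3.127}, I write $X_v^\eps-X_r^\eps$ as the sum of the drift and diffusion integrals over $[r,v]$. The drift part satisfies $\E\sup_v|\int_r^v b\,{\rm d}q|^2\le uL^2\int_r^{r+u}\E|X_q^\eps|^2\,{\rm d}q\le C_Tu^2H_0^2$. For the diffusion part, BDG gives at most $4L^2\int_r^{r+u}\E|X_{q-\tau}^\eps|^2\,{\rm d}q\le C_TuH_0^2$. Both bounds use \eqref{eq-3.126} together with the same linear bound, and $u\le1$ gives $u^2\le u$. Summing the two parts yields \eqref{eq-3.127}.

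There is no real obstacle. The only point requiring care is the localization needed to justify Gronwall before the second moments are known to be finite. One also has to check that the delayed argument $X_{r-\tau}^\eps$ is covered by the supremum over $[-\tau,r]$, which holds for all $\tau\le1$, uniformly in $\tau$.
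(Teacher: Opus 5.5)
Your proposal is correct and is essentially the paper's argument: the paper says only that the estimates follow as in Lemma~\ref{LM-3.10} (linear growth, Cauchy--Schwarz/BDG, localization and Gronwall). Your observation that \eqref{eq-3.108}--\eqref{eq-3.109} give $|b(x,y)|+\normF{\sigma(x,y)}\le L|x|$ uniformly in $y$ is what lets the fast-moment coupling of that proof drop out, so the constants depend only on $T$ and $L$. The one slip is cosmetic: splitting off the initial segment gives $4H_0^2$ rather than $3H_0^2$ in the Gronwall input, which changes nothing.
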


Fix $\chi_0>L^2$ and define the Lyapunov--Krasovskii
functional $S_t$, the weighted fast energy $Z_t$,
and the combined energy $\bar R_t^p$ by
\begin{small}
\begin{equation}\label{eq-3.128}
S_t:=|X_t^\eps|^2+
\chi_0\int_{t-\tau}^t|X_r^\eps|^2\,{\rm d}r,
\qquad
Z_t:=q(S_t)|Y_t^\eps|^2, 
\qquad \bar R_t^p:=U_t+Z_t^{p/2}.
\end{equation}
\end{small}

\begin{lemma}\label{LM-3.21}
For every $\lambda\in(0,\min\{c_1,\lambda_6\})$,
there exists $\eps_1>0$ such that, for every
$T>0$ and $0<\eps\le\eps_1$,
\begin{small}
\begin{equation}\label{eq-3.129}
\E Z_t
\le {\rm e}^{-\lambda t/\eps}Z_0+C_TH_0^2,
\qquad 0\le t\le T.
\end{equation}
\end{small}
\end{lemma}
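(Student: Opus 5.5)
Apply It\^o's formula to ${\rm e}^{\lambda t/\eps}Z_t$ with $Z_t=q(S_t)|Y_t^\eps|^2$. The Krasovskii functional $S_t$ is a semimartingale:
\[
{\rm d}S_t=\bigl(2\langle X_t^\eps,b\rangle+\|\sigma(X_{t-\tau}^\eps,Y_t^\eps)\|_{\rm F}^2+\chi_0|X_t^\eps|^2-\chi_0|X_{t-\tau}^\eps|^2\bigr){\rm d}t+2\langle X_t^\eps,\sigma\,{\rm d}B_t^1\rangle .
\]
Since $\|\sigma(X_{t-\tau}^\eps,\cdot)\|_{\rm F}^2\le L^2|X_{t-\tau}^\eps|^2$ and $\chi_0>L^2$, the delayed term is absorbed. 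Hence the drift of $S_t$ is at most $C|X_t^\eps|^2\le CS_t$. The martingale part has quadratic variation at most $C|X_t^\eps|^2|X_{t-\tau}^\eps|^2\le CS_t^2$.

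Because $B^1$ and $B^2$ are independent, there is no cross variation between $S$ and $Y$. The product rule therefore gives
\[
{\rm d}Z_t=q(S_t)\,{\rm d}|Y_t^\eps|^2+|Y_t^\eps|^2\,{\rm d}q(S_t).
\]
For the first term, \eqref{eq-3.115} yields
\[
q(S_t)\,{\rm d}|Y_t^\eps|^2\le \eps^{-1}q(S_t)\bigl(-c_1|Y_t^\eps|^2+c_2(\rho_d^2+|X_t^\eps|^2)\bigr){\rm d}t+{\rm d}M_t .
\]
For the second term we use $q'(r)=(1+r)^{-2}$ and $q''(r)=-2(1+r)^{-3}\le0$. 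The drift of $q(S_t)$ is then bounded by $q'(S_t)CS_t\le Cq(S_t)$, with no blow-up. The singular-looking factor $\eps^{-1}$ only multiplies $q(S_t)|Y|^2$, which is the good dissipative term.

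Combining, the drift of ${\rm e}^{\lambda t/\eps}Z_t$ is at most
\[
{\rm e}^{\lambda t/\eps}\Bigl[\bigl(\tfrac{\lambda-c_1}{\eps}+C\bigr)Z_t+\tfrac{c_2}{\eps}q(S_t)(\rho_d^2+|X_t^\eps|^2)\Bigr].
\]
Choose $\eps_1$ so that $C\le (c_1-\lambda)/\eps$ for all $\eps\le\eps_1$. The $Z$ term is then nonpositive.

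For the forcing term we use $q(S_t)\le S_t$, so that $q(S_t)|X_t^\eps|^2\le S_t^2\wedge|X_t|^2$. The $\rho_d^2$ term is the delicate point. The bound $q(S_t)\rho_d^2\le \rho_d^2 S_t$ is linear in $S_t$, hence of order $H^2$, which is fine. I would bound $q(S_t)(\rho_d^2+|X_t|^2)\le C(1+\rho_d^2)S_t$. This uses $q(r)\le r$, together with $q(S)|X|^2\le |X|^2\le S$.

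Taking expectations requires localization, using stopping times and the second-moment bounds. Lemma~\ref{LM-3.20} gives $\E S_r\le C_TH_0^2$. Therefore
\[
\E[{\rm e}^{\lambda t/\eps}Z_t]\le Z_0+\frac{C}{\eps}\int_0^t{\rm e}^{\lambda r/\eps}\,C_TH_0^2\,{\rm d}r\le Z_0+\frac{C_T}{\lambda}{\rm e}^{\lambda t/\eps}H_0^2 .
\]
Dividing by ${\rm e}^{\lambda t/\eps}$ gives \eqref{eq-3.129}. The constant $\rho_d$ is absorbed into $C_T$.

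The main obstacle is the localization needed to drop the local martingales. The terms ${\rm d}M_t$ and $|Y|^2\,q'(S)\,{\rm d}(\text{mart})$ need integrability, which holds because $q\le1$ and $q'\le1$ and $Y$ has finite second moments. I would stop at the exit times of $|X|,|Y|>n$ and pass to the limit via Fatou.

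A second concern is controlling the It\^o correction $\tfrac12 q''(S)\,{\rm d}\langle S\rangle$. It is nonpositive since $q''\le0$, so it can simply be dropped.
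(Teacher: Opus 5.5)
Your proposal is correct and follows essentially the paper's own argument: It\^o's formula for $Z_t=q(S_t)|Y_t^\eps|^2$ with no cross-variation term; absorption of the delayed diffusion via $\chi_0>L^2$, which after discarding the nonpositive $q''$ term bounds the slow contribution by $(2L+\chi_0)\,q(S_t)|Y_t^\eps|^2$; the choice $(2L+\chi_0)\eps_1\le c_1-\lambda$; the forcing bound $q(S_t)(\rho_d^2+|X_t^\eps|^2)\le(1+\rho_d^2)S_t$; and finally the integrating factor ${\rm e}^{\lambda t/\eps}$, localization, and $\sup_{0\le r\le T}\E S_r\le C_TH_0^2$ from Lemma~\ref{LM-3.20}. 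One slip is harmless: $|X_{t-\tau}^\eps|^2\le S_t$ fails in general, since the functional only controls an integral of the past, so your quadratic-variation bound by $CS_t^2$ is wrong, but it is never used because the $q''$ term is dropped and the martingale terms are handled by localization.
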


\begin{proof}
Since $B^1$ and $B^2$ are independent,
It\^o's formula gives
\begin{small}
\begin{equation}\label{eq-3.130}
\begin{aligned}
&{\rm d}Z_t
=|Y_t^\eps|^2q'(S_t)
\Bigl[2\langle X_t^\eps,b(X_t^\eps,Y_t^\eps)\rangle
+\normF{\sigma(X_{t-\tau}^\eps,Y_t^\eps)}^2
+\chi_0(|X_t^\eps|^2-|X_{t-\tau}^\eps|^2)\Bigr]{\rm d}t\\
&+2|Y_t^\eps|^2q''(S_t)
|\sigma(X_{t-\tau}^\eps,Y_t^\eps)^\top X_t^\eps|^2\,{\rm d}t
+\frac{q(S_t)}{\eps}
\Bigl[2\langle Y_t^\eps,f(X_t^\eps,Y_t^\eps)\rangle
+\normF{g(X_t^\eps,Y_t^\eps)}^2\Bigr]{\rm d}t
+{\rm d}M_t,
\end{aligned}
\end{equation}
\end{small}
where $M_t$ is a local martingale.
Since $q'(r)=(1+r)^{-2}$, $q''(r)\le0$,
and $\chi_0>L^2$, \eqref{eq-3.113} yields
\begin{small}
\begin{equation}\label{eq-3.131}
\begin{aligned}
&q'(S_t)\Bigl[
2\langle X_t^\eps,b(X_t^\eps,Y_t^\eps)\rangle
+\normF{\sigma(X_{t-\tau}^\eps,Y_t^\eps)}^2
+\chi_0(|X_t^\eps|^2-|X_{t-\tau}^\eps|^2)
\Bigr]\\
&\quad+2q''(S_t)
|\sigma(X_{t-\tau}^\eps,Y_t^\eps)^\top X_t^\eps|^2
\le\frac{(2L+\chi_0)|X_t^\eps|^2
-(\chi_0-L^2)|X_{t-\tau}^\eps|^2}
{(1+S_t)^2}
\le C_0q(S_t),
\end{aligned}
\end{equation}
\end{small}
where $C_0=2L+\chi_0$.
Combining \eqref{eq-3.115} and
\eqref{eq-3.130}--\eqref{eq-3.131}, we obtain
\begin{small}
\[
{\rm d}Z_t
\le\left[
\left(C_0-\frac{c_1}{\eps}\right)Z_t
+\frac{c_2}{\eps}q(S_t)
(\rho_d^2+|X_t^\eps|^2)
\right]{\rm d}t+{\rm d}M_t.
\]
\end{small}
Since $|X_t^\eps|^2\le S_t$,
$
q(S_t)(\rho_d^2+|X_t^\eps|^2)
\le(1+\rho_d^2)S_t.
$
Fix $\lambda\in(0,\min\{c_1,\lambda_6\})$
and choose $\eps_1>0$ so that
$C_0\eps_1\le c_1-\lambda$.
Applying the integrating factor
${\rm e}^{\lambda t/\eps}$,
taking expectations, and localizing
the martingale if necessary, gives
\begin{small}
\begin{equation}\label{eq-3.133}
\E Z_t
\le{\rm e}^{-\lambda t/\eps}Z_0+
\frac C\eps\int_0^t
{\rm e}^{-\lambda(t-r)/\eps}
\E S_r\,{\rm d}r.
\end{equation}
\end{small}
By \eqref{eq-3.128} and Lemma~\ref{LM-3.20},
$\sup_{0\le r\le T}\E S_r\le C_TH_0^2$.
Therefore,
\begin{small}
\[
\E Z_t
\le{\rm e}^{-\lambda t/\eps}Z_0+
\frac{C_TH_0^2}{\eps}
\int_0^t{\rm e}^{-\lambda(t-r)/\eps}\,{\rm d}r
\le{\rm e}^{-\lambda t/\eps}Z_0+C_TH_0^2.
\]
\end{small}
This proves \eqref{eq-3.129}.
\end{proof}

Fix $T>0$ and retain the mesh points $q_j$,
intervals $I_j$, number $N$, and frozen fast processes
$\widehat Y^j$ introduced in \eqref{eq-3.68}.
The following estimate follows by the same
synchronous-coupling argument as in Lemma~\ref{LM-3.11}.

\begin{lemma}\label{LM-3.22}
For every $T>0$, there exists $C_T>0$ such that,
for $r\in I_j$,
\begin{small}
\begin{equation}\label{eq-3.136}
\E|Y_r^\eps-\widehat Y_r^j|^2
\le C_T(r-q_j)\bar R_0^2.
\end{equation}
\end{small}
\end{lemma}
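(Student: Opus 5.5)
The plan is to repeat the synchronous-coupling argument of Lemma~\ref{LM-3.11}. The simplification in Case~II is that the driving error now involves only the slow increment, and by Lemma~\ref{LM-3.20} that increment is controlled by $H_0$ alone, with no contribution from the fast variable. Set $\widetilde Z_r:=Y_r^\eps-\widehat Y_r^j$ for $r\in I_j$, so that $\widetilde Z_{q_j}=0$. Both processes are driven by the same $B^2$, so the noise terms partly cancel. Applying It\^o's formula to $|\widetilde Z_r|^2$ gives the drift
\[
\frac1\eps\Bigl[2\langle \widetilde Z_r,f(X_r^\eps,Y_r^\eps)-f(X_{q_j}^\eps,\widehat Y_r^j)\rangle
+\normF{g(X_r^\eps,Y_r^\eps)-g(X_{q_j}^\eps,\widehat Y_r^j)}^2\Bigr].
\]
If needed, I would first localize with stopping times; this is legitimate because $f,g$ are locally Lipschitz in $y$ and the solutions have finite second moments.

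Next I would split each coefficient difference into a $y$-part at the frozen point $X_{q_j}^\eps$ and an $x$-part at fixed $Y_r^\eps$. The $y$-part is handled by the contraction condition \eqref{eq-3.9}, which Assumption~\ref{AS-3.18} keeps in force. The $x$-part is handled by \eqref{eq-3.7}. For the cross term in $g$ I would use Young's inequality,
\[
\normF{A+B}^2\le(1+\vartheta)\normF{A}^2+(1+\vartheta^{-1})\normF{B}^2,
\]
and bound the linear cross term $2\langle \widetilde Z_r, f(X_r^\eps,Y_r^\eps)-f(X_{q_j}^\eps,Y_r^\eps)\rangle$ by $\tfrac{\lambda_1}{2}|\widetilde Z_r|^2+C|X_r^\eps-X_{q_j}^\eps|^2$. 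Taking expectations, this yields the differential inequality
\[
\frac{\rm d}{{\rm d}r}\E|\widetilde Z_r|^2\le-\frac{\lambda_1}{2\eps}\E|\widetilde Z_r|^2+\frac C\eps\E|X_r^\eps-X_{q_j}^\eps|^2 .
\]

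I would then apply the variation-of-constants formula with $\widetilde Z_{q_j}=0$. Since $r-q_j\le\Delta\le1$, the increment bound \eqref{eq-3.127} with $r=q_j$ and $u=r-q_j$ applies and gives $\E|X_u^\eps-X_{q_j}^\eps|^2\le C_T(u-q_j)H_0^2\le C_T(r-q_j)H_0^2$ for $u\in[q_j,r]$. Combining this with $\frac1\eps\int_{q_j}^r{\rm e}^{-\lambda_1(r-u)/(2\eps)}{\rm d}u\le 2/\lambda_1$, I obtain $\E|\widetilde Z_r|^2\le C_T(r-q_j)H_0^2$.

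Finally I would pass from $H_0^2$ to $\bar R_0^2$. By \eqref{eq-3.29}, $c_-H_0^p\le U_0\le\bar R_0^p$, so $H_0^2\le c_-^{-2/p}\bar R_0^2$, which gives \eqref{eq-3.136}. The main point to check is that no third moments and no fast-variable moments enter anywhere. The inequality \eqref{eq-3.127} needs only $|b|+\normF{\sigma}\le L|x|$, which follows from \eqref{eq-3.108}--\eqref{eq-3.109}, and the frozen-coefficient errors involve $x$-differences only. I also need to verify that the localization step passes to the limit, which follows from Fatou's lemma, since the right-hand side does not depend on the localizing time.
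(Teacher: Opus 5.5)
Your proposal is correct and follows essentially the paper's route. The paper simply invokes the synchronous-coupling argument of Lemma~\ref{LM-3.11}: It\^o's formula, \eqref{eq-3.7}, \eqref{eq-3.9} and Young's inequality give the same differential inequality, with the Case~II increment bound \eqref{eq-3.127} replacing \eqref{eq-3.59}, and your final step $H_0^2\le c_-^{-2/p}\bar R_0^2$ via \eqref{eq-3.29} and $U_0\le\bar R_0^p$ is exactly what converts the bound into the stated form.
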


We retain the It\^o decomposition \eqref{eq-3.71a} and obtain the following result.

\begin{lemma}\label{LM-3.23}
For $T>0$ and and
$\bar\lambda\in(0,\lambda_6)$,, $0<\ell\le T$, setting $\alpha_p:=\frac{1\wedge p}{2}$ and
$\delta:=\eta\bar R_0$ with $0<\eta\le1$,
$0<\Delta\le\min\{1,T\}$ and $0<\eps\le\eps_1$,
there exists $C_T>0$ such that
\begin{small}
\begin{equation}\label{eq-3.138}
\left|
\E\int_0^\ell {\rm e}^{\bar \lambda r}
\Psi_{\delta}(X_r^\eps,Y_r^\eps)\,{\rm d}r
\right|
\le C_T(1+\eta^{p-1})(\ell+\Delta)
\left(\Delta^{\alpha_p}+\frac{\eps}{\Delta}\right)
\bar R_0^p.
\end{equation}
\end{small}
\end{lemma}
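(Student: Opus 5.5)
The argument follows the proof of Lemma~\ref{LM-3.12}. On each $I_j$ we split $\Psi_\delta(X_r^\eps,Y_r^\eps)=A_j(r)+B_j(r)+C_j(r)$ exactly as in \eqref{eq-3.72}. The difference is that Case~II coefficient bounds replace the global Lipschitz ones. Lemma~\ref{LM-3.19} gives bounds with no $\delta^{-2}$ factor and no dependence on $y$. Hence only second moments are needed, through Lemmas~\ref{LM-3.20}--\ref{LM-3.22}. We write $h=r-q_j$ and use $H_0^p\le C U_0\le C\bar R_0^p$ throughout.

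\emph{Term $A_j$ (slow increment).} By \eqref{eq-3.121}, for $0<p\le1$ we have $\E|A_j(r)|\le C\E|X_r^\eps-X_{q_j}^\eps|^p\le C(\E|X_r^\eps-X_{q_j}^\eps|^2)^{p/2}\le C_Th^{p/2}H_0^p$, using \eqref{eq-3.127} and Jensen. For $1<p<2$ we use Cauchy--Schwarz:
\[
\E\bigl[(|X_r^\eps|^{p-1}+|X_{q_j}^\eps|^{p-1})|X_r^\eps-X_{q_j}^\eps|\bigr]
\le C_T H_0^{p-1}\,h^{1/2}H_0 ,
\]
where \eqref{eq-3.126} is applied with exponent $2(p-1)\le2$. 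In both cases $\E|A_j(r)|\le C_Th^{\alpha_p}\bar R_0^p$ with $\alpha_p=(1\wedge p)/2$.

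\emph{Term $B_j$ (freezing error).} By \eqref{eq-3.122}, $|B_j(r)|\le C\mathbf1_{\{|X_{q_j}^\eps|>\delta\}}|X_{q_j}^\eps|^{p-1}\sqrt{q(|X_{q_j}^\eps|^2)}\,|Y_r^\eps-\widehat Y_r^j|$. For $p\ge1$ the prefactor is at most $|X_{q_j}|^{p-1}$, so Cauchy--Schwarz together with \eqref{eq-3.126} and \eqref{eq-3.136} gives $C_T h^{1/2}\bar R_0^{p-1}\bar R_0$. For $p<1$ the indicator gives $|X_{q_j}^\eps|^{p-1}\le\delta^{p-1}=\eta^{p-1}\bar R_0^{p-1}$, and $\sqrt q\le1$, so $\E|B_j|\le C\eta^{p-1}\bar R_0^{p-1}(C_Th\bar R_0^2)^{1/2}$. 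This is the source of the factor $1+\eta^{p-1}$.

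\emph{Term $C_j$ (mixing).} We condition on $\mathcal F_{q_j}$ and apply the Case~II analogues of \eqref{eq-3.15}--\eqref{eq-3.16}. Under \eqref{eq-3.110} and \eqref{eq-3.114} these read $|\E b(z,Y^{z,y}_t)-\bar b(z)|\le C{\rm e}^{-\lambda_3 t}(|z|\wedge1)(\rho_d+|z|+|y|)$, with the $|z|(|z|\wedge1)$ analogue for $a$. They follow from the coupling contraction \eqref{eq-3.17} and the invariant-measure bound. Combined with \eqref{eq-3.38} they give
\[
|\E[C_j\mid\mathcal F_{q_j}]|\le C{\rm e}^{-\lambda_3h/\eps}\mathbf1_{\{|z|>\delta\}}|z|^{p-1}(|z|\wedge1)(\rho_d+|z|+|y_0|).
\]
Using \eqref{eq-3.120}, the fast part $(|z|\wedge1)|y_0|$ is controlled by $\sqrt{q(|z|^2)}\,|y_0|\le\sqrt{2Z_{q_j}}$, since $|z|^2\le S_{q_j}$. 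Its expectation is bounded by Lemma~\ref{LM-3.21} through $Z_0+C_TH_0^2$. The remaining factor $|z|^{p-1}$ is handled as in $B_j$, by H\"older for $p\ge1$ and by $\delta^{p-1}$ for $p<1$. Integrating ${\rm e}^{-\lambda_3h/\eps}$ over $I_j$ gives $\eps/\lambda_3$, and summing over the $N+1\le(\ell+\Delta)/\Delta$ blocks yields $C_T(1+\eta^{p-1})(\ell+\Delta)(\eps/\Delta)\bar R_0^p$. The $A_j,B_j$ contributions integrate to $C_T(1+\eta^{p-1})\ell\Delta^{\alpha_p}\bar R_0^p$, which gives \eqref{eq-3.138}.

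\emph{Main obstacle.} The delicate step is the $\rho_d$ residual in the mixing and invariant-measure bounds. Since $\rho_d\ne0$, we need $(|z|\wedge1)\rho_d\,|z|^{p-1}\lesssim |z|^p$, which holds because $|z|\wedge1\le|z|$; this absorbs $\rho_d$ into $C$. We must also make sure the $\eps$-uniform bound \eqref{eq-3.129} holds at the block start times $q_j$ through conditioning, and that $Z_0^{p/2}\le\bar R_0^p$ is the correct normalisation for $p<2$. Matching the fractional powers, via Jensen on $\E Z^{1/2}$ and H\"older on $|z|^{p-1}$, is where I expect the bookkeeping to be tightest.
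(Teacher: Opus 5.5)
Your proposal is correct and follows essentially the same route as the paper. It uses the same $A_j/B_j/C_j$ splitting, \eqref{eq-3.121} with Lemma~\ref{LM-3.20} for $A_j$, and \eqref{eq-3.122} with Lemma~\ref{LM-3.22} plus the bound $\delta^{p-1}=\eta^{p-1}\bar R_0^{p-1}$ for $B_j$. For $C_j$ it uses $\sqrt{q(|X_{q_j}^\eps|^2)}\,|Y_{q_j}^\eps|\le\sqrt{Z_{q_j}}$ fed into Lemma~\ref{LM-3.21}. The only difference is cosmetic: the paper gets the mixing factor by coupling $\widehat Y^j$ with a stationary copy and applying \eqref{eq-3.122} to $\Psi_\delta$ directly, whereas you state Case~II versions of \eqref{eq-3.15}--\eqref{eq-3.16} for $b$ and $a$ separately, which is the same computation.
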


\begin{proof}
It suffices to consider  $\bar R_0>0$. For $r\in I_j$, set $h=r-q_j$ and decompose
\begin{small}
\begin{equation}\label{eq-3.137}
\begin{aligned}
\Psi_{\delta}(X_r^\eps,Y_r^\eps)
&=\bigl[\Psi_{\delta}(X_r^\eps,Y_r^\eps)
-\Psi_{\delta}(X_{q_j}^\eps,Y_r^\eps)\bigr]+\bigl[\Psi_{\delta}(X_{q_j}^\eps,Y_r^\eps)
-\Psi_{\delta}(X_{q_j}^\eps,\widehat Y_r^j)\bigr]\\
&+\Psi_{\delta}(X_{q_j}^\eps,\widehat Y_r^j)=:A_j(r)+B_j(r)+C_j(r).
\end{aligned}
\end{equation}
\end{small}
By \eqref{eq-3.121}, Lemma~\ref{LM-3.20}, the same argument
as in Lemma~\ref{LM-3.12} gives
\begin{small}
\begin{equation}\label{eq-3.139}
\E|A_j(r)|
\le C_T h^{\alpha_p}H_0^p
\le C_T h^{\alpha_p}\bar R_0^p.
\end{equation}
\end{small}
For $B_j$, \eqref{eq-3.122} and
Lemma~\ref{LM-3.22} yield
\begin{small}
\begin{equation}\label{eq-3.141}
\E|B_j(r)|
\le C_T
\begin{cases}
\delta^{p-1}h^{1/2}\bar R_0,
&0<p\le1,\\
h^{1/2}H_0^{p-1}\bar R_0,
&1<p<2,
\end{cases}
\le C_T(1+\eta^{p-1})
h^{1/2}\bar R_0^p.
\end{equation}
\end{small}

It remains to estimate $C_j$.
Conditionally on $\mathcal F_{q_j}$, couple
$\widehat Y^j$ with a stationary frozen process
$\widetilde Y^j$ having invariant law
$\mu_{X_{q_j}^\eps}$.
By the exponential contraction
of the frozen fast dynamics,
\begin{small}
\[
\E\bigl[\Psi_{\delta}
(X_{q_j}^\eps,\widetilde Y^j)
\mid\mathcal F_{q_j}\bigr]=0,~~
\E\bigl[|\widehat Y_r^j-\widetilde Y^j|
\mid\mathcal F_{q_j}\bigr]
\le C{\rm e}^{-\lambda_3h/\eps}
\bigl(|Y_{q_j}^\eps|+\rho_d+|X_{q_j}^\eps|\bigr).
\]
\end{small}
Thus, by \eqref{eq-3.122},
\begin{small}
\begin{equation}\label{eq-3.145}
|\E[C_j(r)\mid\mathcal F_{q_j}]|
\le C{\rm e}^{-\lambda_3h/\eps}
\mathbf1_{\{|X_{q_j}^\eps|>\delta\}}
|X_{q_j}^\eps|^{p-1}
\sqrt{q(|X_{q_j}^\eps|^2)}\times
\bigl(|Y_{q_j}^\eps|+\rho_d+|X_{q_j}^\eps|\bigr).
\end{equation}
\end{small}
Since $q(|X_t^\eps|^2)\le q(S_t)$,
Lemma~\ref{LM-3.21} gives
\begin{small}
\[
\E\bigl[\sqrt{q(|X_{q_j}^\eps|^2)}
|Y_{q_j}^\eps|\bigr]
\le(\E Z_{q_j})^{1/2}\le\bigl({\rm e}^{-\lambda q_j/\eps}Z_0
+C_TH_0^2\bigr)^{1/2}
\le C_T\bar R_0.
\]
\end{small}
Moreover,
$\sqrt{q(|x|^2)}(\rho_d+|x|)
\le C(1+\rho_d)|x|$.
This with \eqref{eq-3.126} gives, for $0<p\le1$,
\begin{small}
\[
\E|\E[C_j(r)\mid\mathcal F_{q_j}]|
\le C_T\eta^{p-1}
{\rm e}^{-\lambda_3h/\eps}\bar R_0^p.
\]
\end{small}
For $1<p<2$, H\"older's inequality and
\eqref{eq-3.126} instead give
\begin{small}
\[
\E\bigl[
|X_{q_j}^\eps|^{p-1}
\sqrt{q(|X_{q_j}^\eps|^2)}
|Y_{q_j}^\eps|\bigr]\le
(\E|X_{q_j}^\eps|^{2p-2})^{1/2}
(\E Z_{q_j})^{1/2}
\le C_T\bar R_0^p.
\]
\end{small}
The remaining term is bounded using
$|x|^{p-1}\sqrt{q(|x|^2)}
(\rho_d+|x|)\le C(1+\rho_d)|x|^p$.
Consequently, both cases yield
\begin{small}
\begin{equation}\label{eq-3.152}
\E|\E[C_j(r)\mid\mathcal F_{q_j}]|
\le C_T(1+\eta^{p-1})
{\rm e}^{-\lambda_3h/\eps}\bar R_0^p.
\end{equation}
\end{small}
By  $\int_{I_j}{\rm e}^{-\lambda_3(r-q_j)/\eps}
\,{\rm d}r\le C\eps$,
combining \eqref{eq-3.139}, \eqref{eq-3.141},
and \eqref{eq-3.152} gives
\begin{small}
$$
\begin{aligned}
&\left|\E\int_0^\ell
{\rm e}^{\bar \lambda r}
\Psi_\delta(X_r^\eps,Y_r^\eps)\,{\rm d}r\right|\le
\int_{0}^\ell
{\rm e}^{\bar \lambda r}
\bigl(\E|A_j(r)|+\E|B_j(r)|\bigr)\,{\rm d}r+
\left|\E\sum_{j=0}^N\int_{I_j}
{\rm e}^{\bar\lambda r}C_j(r)\,{\rm d}r\right|\\
&\le C_T \Delta^{\alpha_p}\bar R_0^p+C_T(1+\eta^{p-1})
\Delta^{1/2}\bar R_0^p+C_T\eps\dfrac{l+\Delta}{\Delta}(1+\eta^{p-1})\bar R_0^p.
\end{aligned}
$$
\end{small}

This proves \eqref{eq-3.138}.
\end{proof}

 Repeating the argument in Lemma~\ref{LM-3.13},
with Lemma~\ref{LM-3.23} in place of
\eqref{eq-3.71}, and using \eqref{eq-3.113}
and Lemma~\ref{LM-3.20}, yields the following estimate.

\begin{lemma}\label{LM-3.24}
Recall the definition of $\nu_p$ in \eqref{eq-3.79}. For $T>0$, there exists $C_T>0$ such that,
for all $0<\ell\le T$, $0<\eps\le1\wedge\eps_1$
and $0<\tau\le\min\{1,T/2\}$,
\begin{small}
\begin{equation}\label{eq-3.156}
\E V(X_\ell^\eps)
\le {\rm e}^{- \bar\lambda\ell}U_0
+C_T(\eps^{\nu_p}+\tau^{p/2})\bar R_0^p.
\end{equation}
\end{small}
\end{lemma}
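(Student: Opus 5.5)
The argument follows the proof of Lemma~\ref{LM-3.13} under Assumption~\ref{AS-3.5a}, the variant that yields the rate $\tau^{p/2}$. Assumption~\ref{AS-3.17} gives $\sigma(0,y)=0$, and hence $\|a(x,y)-a(z,y)\|\le C(|x|+|z|)|x-z|$ by \eqref{eq-3.113}. We apply It\^o's formula to ${\rm e}^{\bar\lambda r}V_\delta(X_r^\eps)$ and use \eqref{eq-3.45}. As in \eqref{eq-3.71a}, with $\bar\lambda$ in place of $\lambda$, this gives
\[
{\rm e}^{\bar\lambda\ell}\E V_\delta(X_\ell^\eps)+(\lambda_6-\bar\lambda)\int_0^\ell{\rm e}^{\bar\lambda r}\E V_\delta(X_r^\eps)\,{\rm d}r
\le U_0+C_T\delta^p+\Bigl|\E\int_0^\ell{\rm e}^{\bar\lambda r}\Psi_\delta\,{\rm d}r\Bigr|+\E|\mathcal D_\ell^\delta|.
\]
We take $\delta=\eta\bar R_0$ with $\eta\le1$. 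The averaging term is then bounded by Lemma~\ref{LM-3.23}, at the price $C_T(1+\eta^{p-1})(\ell+\Delta)(\Delta^{\alpha_p}+\eps/\Delta)\bar R_0^p$.

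For the delay term, set $H_r:=X_r^\eps-X_{r-\tau}^\eps$. By \eqref{eq-3.38},
\[
|\mathcal D_\ell^\delta|\le C_T\int_0^\ell\1_{\{|X_r^\eps|>\delta\}}\bigl(|X_r^\eps|^{p-1}|H_r|+|X_r^\eps|^{p-2}|H_r|^2\bigr)\,{\rm d}r.
\]
Lemma~\ref{LM-3.20} gives $\E\int_0^\ell|H_r|^2\,{\rm d}r\le C_T\tau H_0^2$. For $r<\tau$ we use the initial history together with \eqref{eq-3.126}; for $r\ge\tau$ we use \eqref{eq-3.127}. The case $p\le1$ is then handled with H\"older's inequality, giving the bound $C_T(\delta^{p-1}H_0\sqrt\tau+\delta^{p-2}H_0^2\tau)$. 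The case $1<p<2$ is handled with Young's inequality exactly as in \eqref{1448}. The resulting term $\varkappa|x|^p\1_{\{|x|>\delta\}}$ is absorbed into $(\lambda_6-\bar\lambda)V_\delta+C\delta^p$, and what remains is $C_T(\tau^{p/2}H_0^p+\delta^{p-2}\tau H_0^2)$. Since $H_0^p\le C U_0\le C\bar R_0^p$, all of these bounds can be expressed in terms of $\bar R_0$.

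Finally, we use $0\le V-V_\delta\le C\delta^p$ to return from $V_\delta$ to $V$, and choose
\[
\eta=\eps^{\beta_p/(p+2)}\vee\tau^{1/2},\qquad \Delta=\min\{1,T\}\eps^{1/(1+\alpha_p)}.
\]
With these choices the delay contributions are at most $C_T\tau^{p/2}\bar R_0^p$: for instance $\delta^{p-1}H_0\sqrt\tau\le\tau^{(p-1)/2}\tau^{1/2}\bar R_0^p$, and $\delta^{p-2}\tau H_0^2\le\tau^{p/2}\bar R_0^p$.

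The main obstacle is the bookkeeping in the averaging error. For $p<1$, Lemma~\ref{LM-3.23} carries the factor $\eta^{p-1}$, which is milder than the $\eta^{-2}$ of Lemma~\ref{LM-3.12}. Note also that $\alpha_p$ here is $(1\wedge p)/2$, which coincides with the $\alpha_p$ used in $\beta_p$ and $\nu_p$. Balancing $\eta^{p}$ against $\eta^{-2}\eps^{\alpha_p/(1+\alpha_p)}$ yields $\eps^{\nu_p}$, and since $\eta^{p-1}\le\eta^{-2}$ the weaker factor only helps. What must be verified is that every term reduces to $\eps^{\nu_p}+\tau^{p/2}$, including the $\tau$-branch of $\eta$ multiplied by the averaging term; that product is bounded because it carries the positive power $\eps^{\beta_p}$. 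Combining these estimates yields \eqref{eq-3.156}.
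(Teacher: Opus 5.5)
Your proposal is correct and follows the paper's route. The paper also reruns the Assumption~\ref{AS-3.5a} branch of Lemma~\ref{LM-3.13}, with Lemma~\ref{LM-3.23} in place of \eqref{eq-3.71}, the bound \eqref{eq-3.113} for the delay term, Lemma~\ref{LM-3.20} for the increments, and the choices $\eta=\eps^{\beta_p/(p+2)}\vee\tau^{1/2}$, $\Delta=\min\{1,T\}\eps^{1/(1+\alpha_p)}$. One wording fix: in the $\tau$-branch, the averaging cross term is controlled not by the factor $\eps^{\beta_p}$ alone but by $\eta\ge\eps^{\beta_p/(p+2)}$ holding in both branches, which gives $(1+\eta^{p-1})\eps^{\beta_p}\le 2\eta^{-2}\eps^{\beta_p}\le 2\eps^{\nu_p}$, exactly as your balancing remark shows.
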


Fix $\theta\in(0,\min\{c_1,\lambda_6\})$.
By Lemma~\ref{LM-3.21} and Holder's inequality,
\begin{small}
\begin{equation}\label{eq-3.157}
\E Z_T^{p/2}
\le C_TU_0+d_\eps(T)Z_0^{p/2},
\qquad \mbox{where  } 
d_\eps(T):={\rm e}^{-\theta pT/(2\eps)}.
\end{equation}
\end{small}

For later use, set
\begin{small}
$
\Theta_p(\eps,\tau):=\eps^{\nu_p}+\tau^{p/2}.
$
\end{small}

\begin{theorem}\label{TH-3.25}
Suppose Assumptions~\ref{AS-3.6},
\ref{AS-3.17}, and \ref{AS-3.18} hold.
For every $\lambda\in(0,\bar\lambda)$,
there exist $\eps_*,\tau_*,C>0$ such that,
for all $0<\eps\le\eps_*$ and
$0<\tau\le\tau_*$, the solution of
\eqref{eq-1.1} satisfies
\begin{small}
\begin{align}
\E|X_t^\eps|^p
\le &C{\rm e}^{-\lambda t}
\|\phi\|_\infty^p(1+|y|^p),
\qquad t\ge0,\label{eq-3.159}\\
\limsup_{t\to\infty}&\frac1t\log|X_t^\eps|
\le-\frac{\lambda}{p}
\qquad\text{a.s.}\label{eq-3.172}
\end{align}
\end{small}
\end{theorem}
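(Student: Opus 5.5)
The plan is to mimic the block-iteration argument of Theorem~\ref{TH-3.14}. The quantity iterated is the combined energy $\bar R_t^p=U_t+Z_t^{p/2}$ from \eqref{eq-3.128}, used in place of $R_t^p$. The point is that in Case II every estimate is homogeneous in the slow state, so no additive term $r_0^p+\rho_d^p$ appears. Fix $0<\lambda<\lambda'<\bar\lambda$ and $T>0$, with $\bar\lambda<\min\{c_1,\lambda_6\}$ as in Lemma~\ref{LM-3.24}. Applying Lemma~\ref{LM-3.24} conditionally on $\mathcal F_s$ with $\ell=T-\tau$ gives
\[
\E[V(X^\eps_{s+T-\tau})\mid\mathcal F_s]\le {\rm e}^{-\lambda'(T-\tau)}U_s+C_T\Theta\bar R_s^p .
\]
Next we pass from $V(X_{s+T-\tau})$ to $U_{s+T}$ via $V(x)\le(1+\zeta)V(z)+C_\zeta|x-z|^p$. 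The increment over the last $\tau$-window is controlled by the conditional form of \eqref{eq-3.127}, which with H\"older gives $C_T\tau^{p/2}H_s^p\le C_T\Theta U_s$. The result is
\[
\E[U_{s+T}\mid\mathcal F_s]\le\bigl((1+\zeta){\rm e}^{-\lambda'(T-\tau)}+C_{T,\zeta}\Theta\bigr)U_s+C_{T,\zeta}\Theta Z_s^{p/2}.
\]
For the fast part we use the conditional version of \eqref{eq-3.157}: $\E[Z_{s+T}^{p/2}\mid\mathcal F_s]\le C_TU_s+d_\eps(T)Z_s^{p/2}$.

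We then form $\mathcal W_s:=U_s+\kappa Z_s^{p/2}$ with $\kappa=\Theta^{1/2}$ and argue exactly as in \eqref{eq-3.98}--\eqref{eq-3.100}. Choose $\zeta$ so that $(1+\zeta){\rm e}^{-\lambda'T}<\rho={\rm e}^{-\lambda T}$. Then choose $\eps_*,\tau_*$ so that the $U$-coefficient $(1+\zeta){\rm e}^{-\lambda'(T-\tau)}+C(\Theta+\kappa)$ and the $Z$-coefficient $C\Theta^{1/2}+d_\eps(T)$ are both at most $\rho$. This yields $\E[\mathcal W_{s+T}\mid\mathcal F_s]\le\rho\mathcal W_s$ with no inhomogeneous term, so $\E\mathcal W_{kT}\le\rho^k\mathcal W_0$. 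For the initial value, $U_0\le c_+\|\phi\|_\infty^p$ and $Z_0^{p/2}=q(S_0)^{p/2}|y|^p\le S_0^{p/2}|y|^p\le C\|\phi\|_\infty^p|y|^p$, using $q(r)\le r$. Hence $\mathcal W_0\le C\|\phi\|_\infty^p(1+|y|^p)$, which is precisely the product form in \eqref{eq-3.159}. Intermediate times $t=kT+\ell$ are handled through Lemma~\ref{LM-3.24} conditionally on $\mathcal F_{kT}$: $\E[|X_t|^p\mid\mathcal F_{kT}]\le C_TU_{kT}+C_T\Theta\bar R_{kT}^p\le C_T(1+\Theta/\kappa)\mathcal W_{kT}$. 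This proves \eqref{eq-3.159} after absorbing ${\rm e}^{\lambda T}$.

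For \eqref{eq-3.172} we repeat the Borel--Cantelli argument of Theorem~\ref{TH-3.16} with rate $\lambda'$. The conditional form of \eqref{eq-3.126} gives $\E\sup_{kT\le t\le (k+1)T}|X_t|^p\le C_T\E U_{kT}\le C_T\E\mathcal W_{kT}\le C_T{\rm e}^{-\lambda' kT}\mathcal W_0$. Markov's inequality at level ${\rm e}^{-\gamma kT/p}$ with $\lambda<\gamma<\lambda'$ makes the series summable, and Borel--Cantelli then gives the almost sure rate.

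The main obstacle is the bookkeeping in the conditional use of Lemma~\ref{LM-3.24} and Lemma~\ref{LM-3.21}. Restarting at time $s$ requires the delay functional $S_s$ to be built from the history on $[s-\tau,s]$, and the restarted $\bar R_s$ must involve exactly $U_s$ and $Z_s$. One must also check that every constant, in particular the term $C_TH_0^2$ in \eqref{eq-3.129} where $\rho_d$ enters only through the factor $1+\rho_d^2$ multiplying $S_t$, stays homogeneous in $H_s$ and carries no additive constant. The step needing care is to verify that $H_s^p\le CU_s$ and $\delta^{p-1}$-type factors do not break homogeneity; the choice $\delta=\eta\bar R_s$ is what makes this work.
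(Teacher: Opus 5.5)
Your proposal is correct and follows the paper's proof essentially step for step: conditional Lemma~\ref{LM-3.24} on $[s,s+T-\tau]$, the $(1+\zeta)$ splitting with the $C_T\tau^{p/2}H_s^p\le C_T\Theta U_s$ increment bound, conditional \eqref{eq-3.157}, the homogeneous contraction of $\mathcal W_s=U_s+\kappa Z_s^{p/2}$, and Borel--Cantelli. The differences are cosmetic: you take $\kappa=\Theta^{1/2}$ as in Theorem~\ref{TH-3.14}, whereas the paper fixes a small $\kappa$ independent of $\eps,\tau$ (either works here because there is no inhomogeneous term), and for the almost sure part you should run the contraction with $\rho={\rm e}^{-\lambda' T}$, as the paper does, so that the bound $\E\mathcal W_{kT}\le {\rm e}^{-\lambda' kT}\mathcal W_0$ you invoke is actually available.
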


\begin{proof}
Fix $\lambda<\lambda'<\bar \lambda<\lambda_6$,
choose $T>0$, and set
$\rho:={\rm e}^{-\lambda'T}$ and
$\Theta:=\Theta_p(\eps,\tau)$.
Writing $\ell=T-\tau$, the conditional version of Lemma~\ref{LM-3.24}
gives
\begin{small}
\begin{equation}\label{eq-3.160}
\E[V(X_{s+\ell}^\eps)\mid\mathcal F_s]
\le{\rm e}^{-\bar\lambda(T-\tau)}U_s
+C_T\Theta\bar R_s^p.
\end{equation}
\end{small}
For $\zeta>0$, Lemma~\ref{LM-3.8}
 implies
$V(x)\le(1+\zeta)V(z)+C_\zeta|x-z|^p$.
Hence
\begin{small}
\[
U_{s+T}\le(1+\zeta)V(X_{s+\ell}^\eps)
+C_\zeta\sup_{s+\ell\le v\le s+T}
|X_v^\eps-X_{s+\ell}^\eps|^p.
\]
\end{small}
By the conditional version of
Lemma~\ref{LM-3.20} and H\"older's inequality,
\begin{small}
\begin{equation}\label{eq-3.161}
\E\left[
\sup_{s+\ell\le v\le s+T}
|X_v^\eps-X_{s+\ell}^\eps|^p
\,\middle|\,\mathcal F_s\right]
\le C_T\tau^{p/2}H_s^p
\le C_T\tau^{p/2}\bar R_s^p.
\end{equation}
\end{small}
Since $\tau^{p/2}\le\Theta$ and
$\bar R_s^p=U_s+Z_s^{p/2}$,
combining \eqref{eq-3.160} and
\eqref{eq-3.161} yields
\begin{small}
\begin{equation}\label{eq-3.162}
\E[U_{s+T}\mid\mathcal F_s]
\le\bigl((1+\zeta)
{\rm e}^{-\bar\lambda(T-\tau)}
+C_{T,\zeta}\Theta\bigr)U_s+C_{T,\zeta}\Theta Z_s^{p/2}.
\end{equation}
\end{small}
Likewise, the conditional version of
\eqref{eq-3.157} gives
\begin{small}
\begin{equation}\label{eq-3.163}
\E[Z_{s+T}^{p/2}\mid\mathcal F_s]
\le C_TU_s+d_\eps(T)Z_s^{p/2}.
\end{equation}
\end{small}
For $\kappa>0$, define
\begin{small}
$
\mathcal W_s:=U_s+\kappa Z_s^{p/2}.
$
\end{small}
By \eqref{eq-3.162} and \eqref{eq-3.163},
\begin{small}
\begin{equation}\label{eq-3.165}
\E[\mathcal W_{s+T}\mid\mathcal F_s]
\le\bigl((1+\zeta)
{\rm e}^{-\bar\lambda(T-\tau)}
+C_{T,\zeta}\Theta+\kappa C_T\bigr)U_s+\bigl(C_{T,\zeta}\Theta/\kappa
+d_\eps(T)\bigr)\kappa Z_s^{p/2}.
\end{equation}
\end{small}
Since $\lambda'<\bar\lambda$,
${\rm e}^{-\bar\lambda T}<\rho$.
Choose $\zeta>0$  small enough such that
$(1+\zeta){\rm e}^{-\bar\lambda T}<\rho$,
and then fix $\kappa\in(0,1]$ sufficiently
small to ensure $(1+\zeta){\rm e}^{-\lambda_6 T}+\kappa C_T<\rho$. We  choose $\eps_*$
and $\tau_*$
sufficiently small such that
$\Theta_*:=\Theta_p(\eps_*,\tau_*)$ satisfy
\begin{small}
\begin{equation}\label{eq-3.166}
(1+\zeta){\rm e}^{-\bar\lambda(T-\tau_*)}
+C_{T,\zeta}\Theta_*+\kappa C_T\le\rho,
C_{T,\zeta}\Theta_*/\kappa+d_{\eps_*}(T)
\le\rho.
\end{equation}
\end{small}
For $\eps\in(0,\eps_*]$ and
$\tau\in(0,\tau_*]$, both coefficients
in \eqref{eq-3.165} are bounded by $\rho$. Thus,
\begin{small}
$
\E[\mathcal W_{s+T}\mid\mathcal F_s]
\le\rho\mathcal W_s.$\end{small} Let $t_k:=kT$. Taking expectations and
iterating  yields
\begin{small}
\begin{equation}\label{eq-3.170}
\E\mathcal W_{t_k}
\le\rho^k\mathcal W_0=\rho^k(U_0+\kappa q(S_0)^{p/2}|y|^p)\leq C\rho^k\|\phi\|_\infty^p(1+|y|^p).
\end{equation}
\end{small}
For arbitrary $t=t_k+u$, $u\in[0,T]$, H\"older's
inequality gives
\begin{small}
$\E[|X_t^\eps|^p\mid\mathcal F_{t_k}]
\le C_TH_{t_k}^p
\le C_T\mathcal W_{t_k}.$
\end{small}
Taking expectations and using
\eqref{eq-3.170}, we obtain
\begin{small}
$
\E|X_t^\eps|^p
\le C_T{\rm e}^{-\lambda t}
\|\phi\|_\infty^p(1+|y|^p).
$
\end{small}
 This proves \eqref{eq-3.159}. 
Finally, conditional Lemma~\ref{LM-3.20}
and \eqref{eq-3.170} give
\begin{small}
$\E\sup_{t_k\le t\le t_{k+1}}|X_t^\eps|^p
\le C_T\rho^k\mathcal W_0.$
\end{small} Applying Markov's inequality and the
Borel--Cantelli lemma as in
Theorem~\ref{TH-3.16} yields \eqref{eq-3.172}.
\end{proof}

Unlike Case I, where homogeneous fast dissipation
forces the stationary fluctuations of the fast component
to vanish as $x\to0$, Case II allows these fluctuations
to persist. Instead, exponential stability is achieved
by requiring the slow coefficients to become insensitive
to the fast state near the equilibrium. The following
example illustrates this alternative mechanism.
\begin{example}
    Consider the scalar system
\begin{equation}\label{eq-3-CaseII}
\begin{cases}
{\rm d}X_t^\eps
=\bigl(X_t^\eps+\frac12\tanh(X_t^\eps)
\tanh(Y_t^\eps)\bigr)\,{\rm d}t
+3X_{t-\tau}^\eps\,{\rm d}B_t^1,\\
{\rm d}Y_t^\eps
=\frac{1}{\eps}(X_t^\eps-Y_t^\eps)\,{\rm d}t
+\sqrt{\frac{2}{\eps}}\,{\rm d}B_t^2.
\end{cases}
\end{equation}
It is easy to verify that Assumptions~\ref{AS-3.17}
and \ref{AS-3.18} hold with
$L=9/2$, $L_f=1$, $c_1=1$, $c_2=2$,
$\lambda_1=2$, $\vartheta=1$ and $\rho_d=1$.
When $x=0$, the frozen fast equation has the
invariant measure $N(0,1)$, so the fast component
need not approach zero. Nevertheless, its effect
on the slow component vanishes at $x=0$ owing
to the multiplicative factor $\tanh(x)$. The averaged coefficients satisfy
$\bar a(x)=9x^2$ and
$x\bar b(x)\leq\frac32x^2$.
Taking $V(x)=|x|^{1/2}$, we obtain
$
\bar{\mathcal L}V(x)
\leq-\frac38V(x)$ for $x\ne0$. Hence, by Theorem~\ref{TH-3.25}, the slow component is
exponentially stable both in the $1/2$th moment
and almost surely for sufficiently small
$\eps$ and $\tau$.
\end{example}

\begin{remark}\label{1834}
When $b(x,y)\equiv\bar b(x)$ and
$\sigma(x,y)\equiv\bar\sigma(x)$, the system reduces
to a stochastic delay equation as studied in
\cite{GuoMaoYue2016,HuLiuDengMao2020}.
Then \eqref{eq-3.71a} becomes
\begin{small}
\begin{equation}\label{1015}
{\rm e}^{\lambda\ell}\E V_\delta(X_\ell)
=V_\delta(\phi(0))+\E\int_0^\ell{\rm e}^{\lambda r}
[\lambda V_\delta(X_r)+\widetilde{\mathscr L}V_\delta(X_r)]\,{\rm d}r
+\E\mathcal D_\ell^\delta,
\end{equation}
\end{small}
where
\begin{small}
$\mathcal D_\ell^\delta:=\frac12\int_0^\ell
{\rm e}^{\lambda r}
\operatorname{tr}([\bar a(X_{r-\tau})-\bar a(X_r)]
D^2V_\delta(X_r))\,{\rm d}r$.
\end{small}
Suppose $\bar b$ is locally Lipschitz with
$|\bar b(x)|\le L|x|$, while $\bar\sigma$ is
globally Lipschitz and $\bar\sigma(0)=0$.
Lemma~\ref{LM-3.20} remains valid with
$H_0=\|\phi\|_\infty$, and the argument of
Lemma~\ref{LM-3.13} gives, for $\lambda\in(0,\lambda_6)$,
\begin{small}
\begin{equation}\label{1028}
\E V(X_\ell)\le{\rm e}^{-\lambda\ell}V(\phi(0))
+C_{T,\lambda}\tau^{p/2}\|\phi\|_\infty^p.
\end{equation}
\end{small}
The arguments of Theorems~\ref{TH-3.14}
and \ref{TH-3.16} then yield exponential stability
in the $p$th moment and almost surely for small $\tau$. For example, consider
\begin{small}
\[
{\rm d}X_t=X_t(1+\sin(X_t^2))\,{\rm d}t
+3X_{t-\tau}\,{\rm d}B_t.
\]
\end{small}
Here, $|\bar b(x)|\le2|x|$, but
$\bar b'(x)=1+\sin(x^2)+2x^2\cos(x^2)$
is unbounded. For $V(x)=|x|^{1/2}$,
\begin{small}
$
\widetilde{\mathscr L}V(x)
=\left(-\frac58+\frac12\sin(x^2)\right)V(x)
\le-\frac18V(x)$, for $x\ne0$.
\end{small}
Hence, the solution is exponentially stable
in the $1/2$th moment and almost surely
for sufficiently small $\tau$. 
Unlike the comparison-based methods in
\cite{GuoMaoYue2016,HuLiuDengMao2020},
which require globally Lipschitz drift coefficients,
our direct Lyapunov approach only requires
local Lipschitz continuity and linear growth.
Thus, this example is not covered by those results.
\end{remark}

\section{Local Lipschitz Conditions}
\label{SEC-4}

We now replace the global Lipschitz conditions in Section~\ref{SEC-3} by local conditions in the slow variable.  As in the preceding section, exponential stability is obtained by two different mechanisms. 

\subsection{Coefficient Conditions and Preliminary Estimates}

\begin{assumption}\label{AS-4.1}
The coefficients $b,\sigma,f,g$ are jointly measurable.
For every $M>0$, there exists $L_M>0$ such that,
for $|x|\vee|z|\le M$ and all $y,v\in\R^m$,
\begin{small}
\begin{align}
|b(x,y)-b(z,v)|+\normF{\sigma(x,y)-\sigma(z,v)}
&\le L_M(|x-z|+|y-v|),\label{eq-4.1}\\
|f(x,y)-f(z,y)|+\normF{g(x,y)-g(z,y)}
&\le L_M|x-z|.\label{eq-4.2}
\end{align}
\end{small}
Moreover, $f,g$ are locally Lipschitz in $y$,
uniformly for $x$ in bounded sets, and there exist
$\lambda_{9,M},\vartheta_M>0$ such that,
for $|x|\le M$ and all $y,v\in\R^m$,
\begin{small}
\begin{equation}\label{eq-4.3}
2\langle y-v,f(x,y)-f(x,v)\rangle
+(1+\vartheta_M)\normF{g(x,y)-g(x,v)}^2
\le-\lambda_{9,M}|y-v|^2.
\end{equation}
\end{small}
\end{assumption}

\begin{assumption}\label{AS-4.2}
One of the following two conditions holds.

\noindent\textup{(Case I)}
For $M>0$, there exist $c_{1,M},c_{2,M}>0$
such that, for $|x|\le M$ and $y\in\R^m$,
\begin{small}
\begin{align}
2\langle y,f(x,y)\rangle+2\normF{g(x,y)}^2
&\le-c_{1,M}|y|^2+c_{2,M}|x|^2,\label{eq-4.4}\\
b(0,0)=f(0,0)=g(0,0)&=\sigma(0,0)=0,\quad y\in\R^m.\label{eq-4.5}
\end{align}
\end{small}

\noindent\textup{(Case II)}
For $M>0$, there exist $L_M,c_{1,M},c_{2,M}>0$
such that, for $|x|\le M$,$y,v\in\R^m$,
\begin{small}
\begin{align}
|b(x,y)-b(x,v)|+\normF{\sigma(x,y)-\sigma(x,v)}
&\le L_M(|x|\wedge1)|y-v|,\label{eq-4.6}\\
2\langle y,f(x,y)\rangle+\normF{g(x,y)}^2
\le-c_{1,M}|y|^2&+c_{2,M}(1+|x|^2),\label{eq-4.7}\\
b(0,y)=\sigma(0,y)=0,\qquad &y\in\R^m.\label{eq-4.8}
\end{align}
\end{small}
\end{assumption}

For each $x\in\R^d$, the frozen fast equation admits
a unique invariant measure $\mu_x$, so the averaged
coefficients and generator remain well defined.
As in Lemma~\ref{LM-3.4}, invariant-measure coupling
gives, for $|x|\vee|z|\le M$,
\begin{small}
\begin{align}
|\bar b(x)|&\le C_M|x|,&
\|\bar a(x)\|&\le C_M|x|^2,\label{eq-4.12}\\
|\bar b(x)-\bar b(z)|&\le C_M|x-z|,&
\|\bar a(x)-\bar a(z)\|
&\le C_M(|x|+|z|)|x-z|.\label{eq-4.13}
\end{align}
\end{small}

\begin{assumption}\label{AS-4.3}
There exist $p\in(0,2)$ and
$
V\in C(\R^d,[0,\infty))\cap C^3(\R^d\setminus\{0\},\R)$ with $V(0)=0$
such that, for each $M>0$, there are positive constants
$c_{-,M},c_{+,M},\lambda_{11,M}$ and
$C_{1,M},C_{2,M},C_{3,M}$ such that for $0<|x|\leq M$,
\begin{align}
&c_{-,M}|x|^p\le V(x)\le c_{+,M}|x|^p,\label{eq-4.14} \\
\widetilde{\mathscr{L}}V(x)\le&-\lambda_{11,M}V(x),
\qquad
\|D^jV(x)\|\le C_{j,M}|x|^{p-j},\quad j=1,2,3.\label{eq-4.15}
\end{align}
In addition, $\mathfrak m(M):=\inf_{|x|\geq M}V(x)\to+\infty$ as $M\to\infty$.
\end{assumption}

Fix a deterministic initial pair $(\phi,y)$. For
$M>\|\phi\|_\infty$, define
\begin{small}
\begin{equation}\label{eq-4.17}
\rho_M:=\inf\{t\ge0:|X_t^\eps|\ge M\},
\end{equation}
\end{small}
with $\inf\varnothing=\infty$.
The limit $\rho_\infty:=\lim_{M\to\infty}\rho_M$
is the explosion time of the maximal local solution
of \eqref{eq-1.1}.
Applying It\^o's formula to the stopped process
gives
\begin{small}
\begin{equation}\label{1522}
\begin{aligned}
&\E\!\left[
{\rm e}^{\lambda(\ell\wedge\rho_M)}
V_\delta(X_{\ell\wedge\rho_M}^\eps)
\right]
=V_\delta(\phi(0))
+\E\int_0^{\ell\wedge\rho_M}
{\rm e}^{\lambda r}
\bigl[\lambda V_\delta(X_r^\eps)
\\&\quad+\widetilde{\mathscr L}V_\delta(X_r^\eps)\bigr]
\,{\rm d}r+\E\int_0^{\ell\wedge\rho_M}
{\rm e}^{\lambda r}
\Psi_\delta(X_r^\eps,Y_r^\eps)\,{\rm d}r
+\E\mathcal D_{\ell\wedge\rho_M}^\delta,
\end{aligned}
\end{equation}
\end{small}
where 
\begin{small}
$\mathcal D_{\ell\wedge\rho_M}^\delta
:=\frac12\int_0^{\ell\wedge\rho_M}
{\rm e}^{\lambda r}
\operatorname{tr}(
[a(X_{r-\tau}^\eps,Y_r^\eps)-a(X_r^\eps,Y_r^\eps)]
D^2V_\delta(X_r^\eps))\,{\rm d}r,
$
\end{small} and $\Psi_\delta$ is defined in \eqref{eq-3.43}.

\subsection{Exponential Stability in Large Probability}

In this subsection, we establish exponential stability
in large probability (Theorems~\ref{TH-4.13} and
\ref{TH-4.14}). For brevity, we give a concise proof
only for Case I. We retain the notation of Section~3
and, for $s<\rho_M$, set
\begin{small}
\[
H_s:=\sup_{s-\tau\le v\le s}|X_v^\eps|,
\quad U_s:=\sup_{s-\tau\le v\le s}V(X_v^\eps),
\quad R_s^3:=H_s^3+|Y_s^\eps|^3.
\]
\end{small}
Unlike the unstopped case, the indicator
$\1_{\{r<\rho_M\}}$ is not
$\cF_{q_j}$-measurable. Thus, the frozen-fast term
requires an additional estimate near the first exit time.

\begin{lemma}\label{LM-4.7}
Suppose Assumption~\ref{AS-4.1},
Case I of Assumption~\ref{AS-4.2}, and
Assumption~\ref{AS-4.3} hold.
For every $T>0$, $0<\ell\le T$,
$0<\eta\le1$, and
$0<\Delta\le\min\{1,T\}$, set
$\delta=\eta R_0$. Then
\begin{small}
\begin{equation}\label{eq-4.39}
\begin{aligned}
&\left|\E\int_0^\ell
\1_{\{r<\rho_M\}}{\rm e}^{\lambda r}
\Psi_\delta(X_r^\eps,Y_r^\eps)\,{\rm d}r\right|\\
&\quad\le C_{M,T}R_0^p
\left[\eta^{-2}(\ell+\Delta)
\left(\Delta^{\alpha_p}+\frac{\eps}{\Delta}\right)
+\eta^{p-2}(\ell+\Delta)^{2/3}\Delta^{1/3}\right].
\end{aligned}
\end{equation}
\end{small}
\end{lemma}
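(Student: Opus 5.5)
The plan is to follow the proof of Lemma~\ref{LM-3.12}, inserting the stopping indicator $\1_{\{r<\rho_M\}}$. On $\{r<\rho_M\}$ we have $|X_r^\eps|<M$ and $|X_{q_j}^\eps|\le M$ for $q_j\le r$. The local constants $L_M$, \eqref{eq-4.12}--\eqref{eq-4.13} and \eqref{eq-4.15} then play the role of the global ones, and every bound below acquires a constant $C_{M,T}$. The preliminary step is to redo Lemmas~\ref{LM-3.10}--\ref{LM-3.11} for the stopped system. This means proving third-moment bounds for $\sup_{v\le u\wedge\rho_M}|X_v^\eps|$ and for $Y_{u\wedge\rho_M}^\eps$, the increment bound $\E\sup_{v}|X^\eps_{(r+v)\wedge\rho_M}-X^\eps_{r\wedge\rho_M}|^3\le C_{M,T}u^{3/2}R_0^3$, and the freezing error $\E\1_{\{r<\rho_M\}}|Y_r^\eps-\widehat Y_r^j|^2\le C_{M,T}(r-q_j)R_0^2$. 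Here $\widehat Y^j$ is frozen at $X^\eps_{q_j}$, so \eqref{eq-4.3} with $M$ applies on $\{q_j<\rho_M\}$. These bounds follow by the same It\^o/BDG/Gronwall arguments, using that the stopped coefficients are globally Lipschitz in the relevant region. Case~I gives $r_0=\rho_d=0$, which explains $R_0^3=H_0^3+|Y_0^\eps|^3$.

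Next we split, for $r\in I_j$, $\1_{\{r<\rho_M\}}\Psi_\delta(X_r^\eps,Y_r^\eps)=\1_{\{r<\rho_M\}}(A_j+B_j)+\1_{\{r<\rho_M\}}C_j$ as in \eqref{eq-3.72}. The terms $A_j$ and $B_j$ carry the indicator harmlessly and are bounded exactly as in \eqref{eq-3.73} and \eqref{eq-3.75}. This gives $C_{M,T}K_\delta h^{\alpha_p}R_0^p$ with $K_\delta\le 2\eta^{-2}$ (since $\delta=\eta R_0$, $\eta\le1$), and integrating produces the term $\eta^{-2}(\ell+\Delta)\Delta^{\alpha_p}R_0^p$.

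For $C_j$ we write $\1_{\{r<\rho_M\}}=\1_{\{q_j<\rho_M\}}-\1_{\{q_j<\rho_M\le r\}}$. The first indicator is $\cF_{q_j}$-measurable, so the conditional mixing argument \eqref{eq-3.76}--\eqref{1215} applies on $\{q_j<\rho_M\}$ and yields $C_{M,T}\eta^{-2}(N+1)\eps R_0^p\le C_{M,T}\eta^{-2}(\ell+\Delta)(\eps/\Delta)R_0^p$. The boundary term is the main obstacle. We bound it crudely via $|C_j(r)|\le C_M\delta^{p-2}\1_{\{|X^\eps_{q_j}|>\delta\}}(|X_{q_j}^\eps|+|\widehat Y_r^j|)^2$, which follows from \eqref{eq-3.38} and $\|D^2V_\delta\|\le C\delta^{p-2}$. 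We then apply H\"older with exponents $3/2$ and $3$:
\[
\sum_j\int_{I_j}\E\bigl[|C_j(r)|\1_{\{q_j<\rho_M\le r\}}\bigr]{\rm d}r\le C_{M}\delta^{p-2}\Bigl(\int_0^\ell\E(|X^\eps_{q_j}|+|\widehat Y^j_r|)^3{\rm d}r\Bigr)^{2/3}\Bigl(\int_0^\ell \1_{\{q_{j(r)}<\rho_M\le r\}}{\rm d}r\Bigr)^{1/3}.
\]
The first factor is at most $C_{M,T}(\ell+\Delta)^{2/3}R_0^2$ by the stopped version of \eqref{eq-3.70}. For the second factor, the set $\{r:q_{j(r)}<\rho_M\le r\}$ is contained in the single mesh interval containing $\rho_M$, so its Lebesgue measure is at most $\Delta$ pathwise, giving $\Delta^{1/3}$. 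Combined with $\delta^{p-2}R_0^2=\eta^{p-2}R_0^p$, this gives exactly the term $\eta^{p-2}(\ell+\Delta)^{2/3}\Delta^{1/3}R_0^p$.

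The expected difficulty is ensuring that the moment bounds for $\widehat Y^j$ and $X^\eps_{q_j}$ hold unstopped, or at least on $\{q_j<\rho_M\}$. This is fine, because the frozen process uses only $X^\eps_{q_j}$ with $|X^\eps_{q_j}|\le M$, together with \eqref{eq-4.4} at level $M$. Summing the three contributions and using $e^{\lambda r}\le e^{\lambda T}$ gives \eqref{eq-4.39}.
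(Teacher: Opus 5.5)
Your proposal is correct and follows the paper's proof essentially step by step: the same $A_j+B_j+C_j$ decomposition, the same splitting $\1_{\{r<\rho_M\}}=\1_{\{q_j<\rho_M\}}-\1_{\{q_j<\rho_M\le r\}}$ for the frozen term, and the same crude bound $|\Psi_\delta|\le C_M\delta^{p-2}(|x|^2+|y|^2)$ combined with H\"older exponents $3/2$ and $3$ for the exit correction. The only difference is cosmetic: you apply H\"older on $\Omega\times[0,\ell]$ and bound the exit window pathwise by $\Delta$ (so your second factor should read $(\E\int_0^\ell\1_{\{q_{j(r)}<\rho_M\le r\}}\,{\rm d}r)^{1/3}$, and your first factor should keep $\1_{\{q_j<\rho_M\}}$), whereas the paper works interval by interval with $\Pp(G_j)^{1/3}$ and then uses finite-sum H\"older together with $\sum_j\Pp(G_j)\le1$.
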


\begin{proof}
The case $R_0=0$ is trivial.
Otherwise, use the three-part decomposition
\eqref{eq-3.72} with terms
$A_j(r),B_j(r),C_j(r)$, based on the
stopped slow process and its frozen fast
processes. As in Lemma~\ref{LM-3.12},
the local moment and freezing estimates give
\begin{small}
\begin{equation}\label{eq-4.40}
\E\bigl[|A_j(r)|\1_{\{r<\rho_M\}}+|B_j(r)|\1_{\{r<\rho_M\}}\bigr]
\le C_{M,T}\eta^{-2}
(r-q_j)^{\alpha_p}R_0^p,
\qquad r\in I_j.
\end{equation}
\end{small}
For the frozen term,
\begin{small}
\begin{equation}\label{eq-4.41}
\1_{\{r<\rho_M\}}C_j(r)
=\1_{\{q_j<\rho_M\}}C_j(r)
-\1_{\{q_j<\rho_M\}}
\1_{\{r\ge\rho_M\}}C_j(r).
\end{equation}
\end{small}
Since $\{q_j<\rho_M\}\in\cF_{q_j}$,
the conditional mixing argument of
Lemma~\ref{LM-3.12} yields
\begin{small}
\begin{equation}\label{eq-4.42}
\left|\E\sum_j\1_{\{q_j<\rho_M\}}
\int_{I_j}{\rm e}^{\lambda r}
C_j(r)\,{\rm d}r\right|
\le C_{M,T}\eta^{-2}
(\ell+\Delta)\frac{\eps}{\Delta}R_0^p.
\end{equation}
\end{small}
To estimate the correction term, put
$G_j:=\{q_j<\rho_M\le q_{j+1}\wedge\ell\}$.
The local coefficient bounds imply, for $|x|\leq M$,
\begin{small}
\begin{equation}\label{eq-4.43}
|\Psi_\delta(x,y)|\leq C_M(|x|+|y|)|x|^{p-1}\mathbf1_{|x|>\delta}+C_M(|x|+|y|)^2|x|^{p-2}\mathbf1_{|x|>\delta}
\le C_M\delta^{p-2}(|x|^2+|y|^2).
\end{equation}
\end{small}
The third-moment estimates, obtained as in
Lemmas~\ref{LM-3.10} and \ref{LM-3.11}
for the stopped slow process and the frozen
fast process, yield
\begin{small}
\begin{equation}\label{eq-4.44}
\E\left[
\1_{\{q_j<\rho_M\}}|C_j(r)|^{3/2}\right]
\le C_{M}\delta^{3(p-2)/2}\E[|X_{q_j}^\eps|^3+|\hat{Y}_j^\eps|^3;q_j<\rho_M]
\leq C_{M,T}\delta^{3(p-2)/2}R_0^3.
\end{equation}
\end{small}
Since $\{q_j<\rho_M\le r\}\subseteq G_j$
for $r\in I_j$, H\"older's inequality gives
\begin{small}
\begin{equation}\label{eq-4.45}
\begin{aligned}
&\E\sum_j\1_{\{q_j<\rho_M\}}
\int_{I_j}\1_{\{r\ge\rho_M\}}
{\rm e}^{\lambda r}|C_j(r)|\,{\rm d}r\\
&\quad\le C_{M,T}\sum_j\int_{I_j}
\left(\E[\1_{\{q_j<\rho_M\}}|C_j(r)|^{3/2}]\right)^{2/3}
\Pp(G_j)^{1/3}\,{\rm d}r\le C_{M,T}\eta^{p-2}R_0^p
\Delta\sum_j\Pp(G_j)^{1/3}\\
&\quad\le C_{M,T}\eta^{p-2}R_0^p
\Delta(N+1)^{2/3}
\left(\sum_j\Pp(G_j)\right)^{1/3}\le C_{M,T}\eta^{p-2}
(\ell+\Delta)^{2/3}\Delta^{1/3}R_0^p.
\end{aligned}
\end{equation}
\end{small}
Here, the third inequality follows from
H\"older's inequality for finite sums,
while the last uses
$\sum_j\Pp(G_j)\le1$ and
$N+1\le(\ell+\Delta)/\Delta$.
Combining \eqref{eq-4.40}--\eqref{eq-4.42}
and \eqref{eq-4.45} proves \eqref{eq-4.39}.
\end{proof}

Set
\begin{small}
$
\alpha_p^{\rm I}:=\alpha_p\wedge\frac13,
\beta_p^{\rm I}:=\frac{\alpha_p^{\rm I}}{1+\alpha_p^{\rm I}},
\nu_p^{\rm I}:=\frac{p\beta_p^{\rm I}}{p+2}.
$
\end{small}
For $0<\eps\le1$ and
$0<\tau\le\min\{1,T/2\}$, choose
\begin{small}
$
\eta:=\eps^{\beta_p^{\rm I}/(p+2)}\vee\tau^{1/4},
\Delta:=\min\{1,T\}\eps^{1/(1+\alpha_p^{\rm I})}.
$
\end{small}
Moreover, for $0<p\le2$,
$\eta^{p-2}\sqrt{\tau}$ $
\le\tau^{p/4}.$

\begin{lemma}\label{LM-4.8}
Let the assumptions of Lemma~\ref{LM-4.7} hold.
For $T>0$, $\lambda<\lambda_{11,M}$,$\eps\in(0,1]$,
$\tau\in(0,\min\{1,T/2\}]$, and
$\ell\in(0,T]$, set
$\sigma_\ell:=\ell\wedge\rho_M$. Then
\begin{small}

\begin{equation}\label{eq-4.51}
\E\left[{\rm e}^{\lambda\sigma_\ell}
V(X_{\sigma_\ell}^\eps)\right]
\le U_0+C_{M,T,\lambda}
\left(\eps^{\nu_p^{\rm I}}+\tau^{p/4}\right)R_0^p.
\end{equation}
\end{small}
Moreover, with
$\mathcal E_T^M:=\{\rho_M\le T\}$ and
$\mathfrak m(M):=\inf_{|x|\geq M}V(x)$,
\begin{small}
\begin{equation}\label{eq-4.64}
\Pp(\mathcal E_T^M)
\le\frac{U_0+C_{M,T,\lambda}
\left(\eps^{\nu_p^{\rm I}}+\tau^{p/4}\right)R_0^p}
{\mathfrak m(M)}.
\end{equation}
\end{small}
\end{lemma}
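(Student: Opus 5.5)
We apply the stopped Itô identity \eqref{1522} with $\lambda<\lambda_{11,M}$ and follow the proof of Lemma~\ref{LM-3.13}, replacing every global constant by its $M$-dependent counterpart.

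\textbf{Drift term.} For $r<\rho_M$ we have $|X_r^\eps|\le M$. Hence Assumption~\ref{AS-4.3} and \eqref{eq-4.12} apply on the relevant region. The argument of \eqref{eq-3.45} in Lemma~\ref{LM-3.9} then gives
\[
\lambda V_\delta(x)+\widetilde{\mathscr L}V_\delta(x)\le-(\lambda_{11,M}-\lambda)V_\delta(x)+C_M\delta^p\le C_M\delta^p
\quad\text{for } |x|\le M .
\]
The drift integral in \eqref{1522} is therefore bounded by $C_{M,T}\delta^p$.

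\textbf{Fluctuation term.} Writing $\int_0^{\ell\wedge\rho_M}=\int_0^\ell\1_{\{r<\rho_M\}}$, Lemma~\ref{LM-4.7} with $\delta=\eta R_0$ bounds it by
\[
C_{M,T}R_0^p\Bigl[\eta^{-2}(\ell+\Delta)\bigl(\Delta^{\alpha_p}+\eps/\Delta\bigr)+\eta^{p-2}(\ell+\Delta)^{2/3}\Delta^{1/3}\Bigr].
\]
Since $\alpha_p^{\rm I}\le\frac13$, the term $\Delta^{1/3}$ is at most $\Delta^{\alpha_p^{\rm I}}$ and $\Delta^{\alpha_p}\le\Delta^{\alpha_p^{\rm I}}$. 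So the whole bracket is at most $C_T\eta^{-2}\bigl(\Delta^{\alpha_p^{\rm I}}+\eps/\Delta\bigr)$.

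\textbf{Delay term.} We bound $\mathcal D^\delta_{\ell\wedge\rho_M}$ as in the first part of Lemma~\ref{LM-3.13}. Use the local bound $\|a(x,y)-a(z,y)\|\le C_M(|x|+|z|+|y|)|x-z|$ for $|x|\vee|z|\le M$, together with $\|D^2V_\delta\|\le C_M\delta^{p-2}$, Cauchy--Schwarz, and the stopped analogues of \eqref{eq-3.57}--\eqref{eq-3.59}. These analogues are obtained by the same Gronwall/BDG argument using the linear growth valid on $\{|x|\le M\}$ and the Case~I dissipativity \eqref{eq-4.4}. The result is $C_{M,T}\delta^{p-2}R_0^2\sqrt\tau=C_{M,T}\eta^{p-2}\sqrt\tau\,R_0^p\le C_{M,T}\tau^{p/4}R_0^p$, using the stated inequality $\eta^{p-2}\sqrt\tau\le\tau^{p/4}$.

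\textbf{Removing the cutoff and choosing parameters.} We pass from $V_\delta$ back to $V$ on the left side via $0\le V-V_\delta\le C_M\delta^p$. Since $\lambda\sigma_\ell\le\lambda T$, the resulting error is $C_{M,T,\lambda}\eta^pR_0^p$. On the right, $V_\delta(\phi(0))\le V(\phi(0))\le U_0$.

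Next substitute $\eta=\eps^{\beta_p^{\rm I}/(p+2)}\vee\tau^{1/4}$ and $\Delta\asymp\eps^{1/(1+\alpha_p^{\rm I})}$. With this $\Delta$,
\[
\Delta^{\alpha_p^{\rm I}}+\eps/\Delta\asymp\eps^{\beta_p^{\rm I}} .
\]
The averaging error $\eta^{-2}\eps^{\beta_p^{\rm I}}$ is at most $\eps^{\beta_p^{\rm I}(1-2/(p+2))}=\eps^{\nu_p^{\rm I}}$, because $\eta\ge\eps^{\beta_p^{\rm I}/(p+2)}$. The cutoff error satisfies $\eta^p\le\eps^{\nu_p^{\rm I}}+\tau^{p/4}$. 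Combining all three contributions proves \eqref{eq-4.51}.

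\textbf{Exit probability.} For \eqref{eq-4.64}, continuity of paths gives $|X^\eps_{\rho_M}|=M$ on $\mathcal E_T^M$. Hence $V(X^\eps_{\sigma_T})\ge\mathfrak m(M)$ on this event, and ${\rm e}^{\lambda\sigma_T}\ge1$. Applying Markov's inequality to \eqref{eq-4.51} with $\ell=T$ yields \eqref{eq-4.64}.

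\textbf{Main obstacle.} The delicate step is establishing the stopped moment and increment estimates with constants uniform in $\eps$. Stopping at $\rho_M$ only localizes the slow variable, while the fast variable remains unbounded. The plan is to use the stopped Itô argument of Lemma~\ref{LM-3.10}, with $|X_{r\wedge\rho_M}|\le M$ feeding into the dissipativity \eqref{eq-4.4} uniformly.
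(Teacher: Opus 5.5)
Your proposal is correct and follows the paper's own route. The paper's proof is a three-line sketch, and you carry out the same steps in more detail: the stopped Itô identity \eqref{1522}, Lemma~\ref{LM-4.7} for the fluctuation term (the exit correction absorbed via $\Delta^{1/3}\le\Delta^{\alpha_p^{\rm I}}$), the Section~3 cutoff and delay estimates with the stated choices of $\eta$ and $\Delta$, and Markov's inequality on $\{\rho_M\le T\}$ using $|X^\eps_{\rho_M}|=M$. The last step implicitly uses $\lambda\ge0$ to get ${\rm e}^{\lambda\sigma_T}\ge1$, as does the paper.
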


\begin{proof}
The estimate \eqref{eq-4.51} follows by
combining Lemma~\ref{LM-4.7} with the
cutoff and delay estimates of Section~3,
using the parameter choices above.
The additional exit correction is controlled
by \eqref{eq-4.45}.
On $\mathcal E_T^M$, we have
$\sigma_T=\rho_M$ and
$|X_{\rho_M}^\eps|=M$.
Hence
$\mathfrak m(M)\1_{\mathcal E_T^M}
\le {\rm e}^{\lambda\sigma_T}
V(X_{\sigma_T}^\eps)$.
Taking expectations proves \eqref{eq-4.64}.
\end{proof}

\begin{theorem}[Case I]\label{TH-4.13}
Suppose Assumption~\ref{AS-4.1}, Case I of
Assumption~\ref{AS-4.2}, and
Assumption~\ref{AS-4.3} hold.
For  $R_0>0$, $\nu\in(0,1)$,
there exist $\eps_*,\tau_*>0$ such that,
for all $\eps\in(0,\eps_*]$,
$\tau\in(0,\tau_*]$, and deterministic
initial data satisfying
$\|\phi\|_\infty\vee|y|<R_0$,
\begin{small}
\begin{equation}\label{eq-4.67}
\Pp\left(\rho_\infty=\infty,\quad
\limsup_{t\to\infty}\frac1t\log|X_t^\eps|
<0\right)\ge1-\nu.
\end{equation}
\end{small}
\end{theorem}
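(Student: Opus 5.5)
Fix $\nu\in(0,1)$ and the bound $R_0$ on the initial data. The plan is to localize to a ball of radius $M$ chosen so large that the solution started inside $\{\|\phi\|_\infty\vee|y|<R_0\}$ leaves the ball with probability at most $\nu$, uniformly in small $\eps,\tau$. On the event of never leaving, the global Case I iteration of Theorem~\ref{TH-3.14} and Theorem~\ref{TH-3.16} applies with $M$-dependent constants. Concretely, the first step is to replace the one-block exit bound \eqref{eq-4.64} by an exit bound over all blocks. I run the block iteration on the stopped process, with blocks $[kT,(k+1)T]$. Applying Lemma~\ref{LM-4.8} conditionally on $\mathcal F_{kT}$ on $\{kT<\rho_M\}$, together with the stopped analogues of \eqref{eq-3.93}--\eqref{eq-3.96}, should give the supermartingale-type inequality
\[
\E\bigl[\mathcal W_{(k+1)T}\1_{\{(k+1)T<\rho_M\}}\mid\mathcal F_{kT}\bigr]\le \rho\,\mathcal W_{kT}\1_{\{kT<\rho_M\}},
\]
where $\mathcal W_s=U_s+\kappa|Y_s^\eps|^p$, $\kappa=\Theta^{1/2}$, and $\rho={\rm e}^{-\lambda T}<1$. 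Here $r_0=\rho_d=0$ (Case I, using \eqref{eq-4.4}--\eqref{eq-4.5}), so no additive constant appears. To get this I choose $\lambda<\lambda_{11,M}$ and $T$ first, then $\zeta$, then $\eps_*,\tau_*$ depending on $M,T$, exactly as in \eqref{eq-3.99}, where $\Theta=\eps^{\nu_p^{\rm I}}+\tau^{p/4}$.

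The second step is to bound the total exit probability by summing over blocks:
\[
\Pp(\rho_M<\infty)\le\sum_k\Pp\bigl(kT<\rho_M\le(k+1)T\bigr)\le\sum_k\frac{U_{kT}\text{-type bound}}{\mathfrak m(M)}.
\]
For each block I apply \eqref{eq-4.64} conditionally. This gives $\Pp(kT<\rho_M\le(k+1)T\mid\mathcal F_{kT})\le C_{M,T}\,\mathcal W_{kT}\kappa^{-1}\1_{\{kT<\rho_M\}}/\mathfrak m(M)$, using $R_{kT}^p\le C\kappa^{-1}\mathcal W_{kT}$. Taking expectations and summing the geometric series yields $\Pp(\rho_M<\infty)\le C_{M,T}\kappa^{-1}(1-\rho)^{-1}\mathcal W_0/\mathfrak m(M)$. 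This is problematic, because the constants depend on $M$ and $\kappa^{-1}$ blows up.

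I expect this to be the main obstacle, since $\mathfrak m(M)\to\infty$ must beat $M$-dependent constants. To resolve it, I would keep the dominant term separate. In \eqref{eq-4.64}, the leading contribution $U_0$ has no $M$-dependent constant; only the error $C_{M,T}\Theta R_0^p$ does. For the first block, $U_0\le c_{+,R_0}R_0^p$ is independent of $M$. For later blocks, I use a stopped version of the bound $\E[{\rm e}^{\lambda\sigma}V(X_\sigma)]$ directly. I sum $\E[\1_{\{kT<\rho_M\}}U_{kT}]$ with the decay $\rho^k$ and a factor $C_T$ coming from the $V$-comparison $V(x)\le(1+\zeta)V(z)+\dots$, which is $M$-independent only after one uses the local constants \eqref{eq-4.14}. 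If the $M$-dependence cannot be removed entirely, I choose the order differently. First I pick $M$ with $\mathfrak m(M)\ge 4c_{+,R_0}R_0^p\,C'/\nu$, where $C'$ is the $M$-free part. Only then do I pick $\eps_*,\tau_*$ so that the $M$-dependent error terms $C_{M,T}\Theta^{1/2}R_0^p/\mathfrak m(M)$ are at most $\nu/2$. This ordering, with $M$ first and $\eps,\tau$ after, is the crux.

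The final step is to conclude on $\{\rho_M=\infty\}$, which has probability at least $1-\nu$. On this event $\rho_\infty=\infty$, and the stopped inequality gives $\E[\sup_{kT\le t\le(k+1)T}|X_t^\eps|^p\1_{\{\rho_M=\infty\}}]\le C_{M,T}\rho^k\mathcal W_0/\kappa$. This uses the conditional stopped analogue of \eqref{eq-3.57} with Hölder's inequality. Markov's inequality and Borel--Cantelli, as in the proof of Theorem~\ref{TH-3.16}, then give $\limsup_{t\to\infty}t^{-1}\log|X_t^\eps|\le-\gamma/p<0$ almost surely on this event, which proves \eqref{eq-4.67}.
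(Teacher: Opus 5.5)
Your proposal is correct and follows essentially the paper's route: you localize at $\rho_M$, derive the stopped contraction $\E[\1_{\{(k+1)T<\rho_M\}}\mathcal W_{(k+1)T}\mid\cF_{kT}]\le\rho\,\1_{\{kT<\rho_M\}}\mathcal W_{kT}$, sum the conditional one-block exit bounds \eqref{eq-4.64}, fix $M$ from $R_0,\nu$ before choosing $\eps_*,\tau_*$, and finish with Borel--Cantelli on $\{\rho_M=\infty\}$.

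Your ``resolution'' is exactly what makes the argument close. It is cleaner to write it as $C_{M,T}\Theta R_{kT}^p\le C_{M,T}\bigl(\Theta U_{kT}+\Theta\kappa^{-1}\cdot\kappa|Y_{kT}^\eps|^p\bigr)\le C_{M,T}\Theta^{1/2}\mathcal W_{kT}$. (The paper instead fixes $\kappa$ and then takes $\Theta\ll\kappa$.) This gives the $M$-free per-block bound $2\mathcal W_{kT}/\mathfrak m(M)$ of \eqref{eq-4.82}. Make sure $T$ is chosen after $M$, and hence after $\lambda<\lambda_{11,M}$, so that $\rho\le 1/2$; this is what makes $\sum_k\rho^k$, and hence your constant $C'$, genuinely independent of $M$.
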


\begin{proof}
Fix $T>1$, let $t_k=kT$ and
$\mathcal S_k:=\{\rho_M>t_k\}$.
All estimates below are understood on
$\mathcal S_k$. Fix $\zeta\in(0,1)$ and put
$\ell=T-\tau\ge T/2$. Denote by $\Theta_{T,p}(\eps,\tau):=\eps^{\nu_p^{\rm I}}+\tau^{p/4}$. Applying Lemma~\ref{LM-4.8}
conditionally on $\cF_{t_k}$,
and using the inequality
$V(x)\le(1+\zeta)V(z)+C_{\zeta,M}|x-z|^p$
together with the stopped increment
estimate yields
\begin{small}
\begin{equation}\label{eq-4.74}
\E[\1_{\mathcal S_{k+1}}U_{t_{k+1}}
\mid\cF_{t_k}]\le\1_{\mathcal S_k}
\left[(1+\zeta)
{\rm e}^{-\lambda(T-\tau)}U_{t_k}
+C_{M,T,\zeta}\Theta_{T,p}(\eps,\tau)
\bigl(U_{t_k}+|Y_{t_k}^\eps|^p\bigr)\right].
\end{equation}
\end{small}
The conditional fast-moment estimate gives
\begin{small}
\begin{equation}\label{eq-4.77}
\E[\1_{\mathcal S_{k+1}}|Y_{t_{k+1}}^\eps|^p
\mid\cF_{t_k}]\le\1_{\mathcal S_k}
\left[C_{M,T}U_{t_k}
+d_{\eps,M}^{\rm I}|Y_{t_k}^\eps|^p\right],
\end{equation}
\end{small}
where
$d_{\eps,M}^{\rm I}
=C_{M,T}\eps^{p/3}
+C{\rm e}^{-\lambda_{12,M}pT/(3\eps)}
\to0$ as $\eps\downarrow0$. Choose $M>2R_0$ sufficiently large that
\begin{small}
$
\frac{4(c_{+,R_0}+1)R_0^p}
{\mathfrak m(M)}\le\nu.
$
\end{small}
Next choose $T$ so that
$(1+\zeta){\rm e}^{-\lambda T/2}
\le1/4$, and then $\kappa\in(0,1]$
such that $\kappa C_{M,T}\le1/8$.
Finally, choose $\eps_*$ and $\tau_*$
sufficiently small that
\begin{small}
$
C_{M,T,\zeta}
\Theta_{T,p}^{\rm I}(\eps_*,\tau_*)
\le\frac{\kappa}{8},
d_{\eps,M}^{\rm I}\le\frac14
\quad(0<\eps\le\eps_*),
$
$
C_{M,T}\Theta_{T,p}(\eps_*,\tau_*)\le\kappa.
$
\end{small}
Define
\begin{small}
$
\mathcal W_t^{\rm I}:=
U_t+\kappa|Y_t^\eps|^p.
$
\end{small}
Combining \eqref{eq-4.74} and
\eqref{eq-4.77}, we obtain
\begin{small}
\begin{equation}\label{eq-4.81}
\E[\1_{\mathcal S_{k+1}}
\mathcal W_{t_{k+1}}^{\rm I}
\mid\cF_{t_k}]
\le\frac12\1_{\mathcal S_k}
\mathcal W_{t_k}^{\rm I}.
\end{equation}
\end{small}
Moreover, \eqref{eq-4.64} of Lemma \ref{LM-4.8}, applied
conditionally at $t_k$, yields
\begin{small}
\begin{equation}\label{eq-4.82}
\Pp(\mathcal S_k\setminus\mathcal S_{k+1}
\mid\cF_{t_k})
\le\frac{2}{\mathfrak m(M)}
\1_{\mathcal S_k}\mathcal W_{t_k}^{\rm I}.
\end{equation}
\end{small}
Consequently,
\begin{small}
\begin{equation}\label{eq-4.83}
\E[\1_{\mathcal S_k}\mathcal W_{t_k}^{\rm I}]
\le2^{-k}\mathcal W_0^{\rm I}.
\end{equation}
\end{small}
Let $\mathcal S_\infty
:=\bigcap_{k\ge0}\mathcal S_k
=\{\rho_M=\infty\}$.
By \eqref{eq-4.82}--\eqref{eq-4.83},
\begin{small}
\begin{equation}\label{eq-4.84}
\Pp(\mathcal S_\infty^c)
\le\frac2{\mathfrak m(M)}
\sum_{k=0}^\infty2^{-k}\mathcal W_0^{\rm I}=\frac{4\mathcal W_0^{\rm I}}
{\mathfrak m(M)}\le\nu.
\end{equation}
\end{small}
On $\mathcal S_\infty$,
$X_t^\eps$ agrees with the stopped
auxiliary slow process. The conditional
maximal-moment estimate and
\eqref{eq-4.83} imply
\begin{small}
$\E\left[\1_{\mathcal S_\infty}
\sup_{t_k\le t\le t_{k+1}}
|X_t^\eps|^p\right]
\le C_{M,T,\kappa}2^{-k}
\mathcal W_0^{\rm I}$.
\end{small}
The Borel--Cantelli lemma yields
\begin{small}
$\limsup_{t\to\infty}\frac1t\log|X_t^\eps|
\le-\frac{\log2}{pT}
\qquad\text{a.s. on }\mathcal S_\infty$.
\end{small}
Since
$\mathcal S_\infty\subseteq
\{\rho_\infty=\infty\}$,
\eqref{eq-4.84} proves \eqref{eq-4.67}.
\end{proof}

\begin{remark}
The key idea is to estimate the exit probability
on each time block using geometric contraction.
These probabilities form a geometrically decaying
series whose sum can be made smaller than $\nu$.
On the non-exit event, almost sure exponential
stability follows from the same contraction.
\end{remark}

We present the result for case II without proof.

\begin{theorem}[Case II]\label{TH-4.14}
Suppose Assumption~\ref{AS-4.1},
Case II of Assumption~\ref{AS-4.2},
and Assumption~\ref{AS-4.3} hold.
For  $R_0>0$,$\nu\in(0,1)$,
there exist $\eps_*,\tau_*>0$ such that,
for all $\eps\in(0,\eps_*]$,
$\tau\in(0,\tau_*]$, and deterministic
initial data satisfying
$\|\phi\|_\infty\vee|y|<R_0$,
\begin{small}
\begin{equation}\label{eq-4.88}
\Pp\left(\rho_\infty=\infty,\quad
\limsup_{t\to\infty}\frac1t\log|X_t^\eps|<0\right)\ge1-\nu.
\end{equation}
\end{small}
\end{theorem}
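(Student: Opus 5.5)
The proof will follow the Case I argument of Theorem~\ref{TH-4.13}, with the Case II machinery of Subsection~\ref{SUBSUBSEC-3.2.2} replacing the third-moment fast estimates. Fix $M>2R_0$ and work with the process stopped at $\rho_M$. On $\{|x|\le M\}$, Assumption~\ref{AS-4.1} and Case II of Assumption~\ref{AS-4.2} give the local analogues of Assumptions~\ref{AS-3.17}--\ref{AS-3.18}. The constant $1$ in \eqref{eq-4.7} plays the role of $\rho_d^2$. For $|x|\le M$ we have $|x|\wedge1\ge |x|/M$, so \eqref{eq-4.6}, \eqref{eq-4.8} give $|b(x,y)|+\normF{\sigma(x,y)}\le C_M|x|$.

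With $S_t$, $Z_t=q(S_t)|Y_t^\eps|^2$ and $\bar R_t^p=U_t+Z_t^{p/2}$ as in \eqref{eq-3.128}, I would first prove stopped versions of Lemmas~\ref{LM-3.20}--\ref{LM-3.22}. The drift computation \eqref{eq-3.131} only uses $|b|,\normF{\sigma}\le C_M|x|$ on $\{r<\rho_M\}$, so the stopped $Z$ satisfies \eqref{eq-3.129} with $M$-dependent constants.

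Next I would prove the analogue of Lemma~\ref{LM-4.7} for Case II. It has the same three-part decomposition \eqref{eq-3.137}, bounded as in Lemma~\ref{LM-3.23}, plus the exit correction \eqref{eq-4.41}. For the correction, \eqref{eq-4.43} is replaced by the Case II bound: on $|x|\le M$, using \eqref{eq-3.122} with a stationary reference point, $|\Psi_\delta(x,y)|\le C_M|x|^{p}+C_M\mathbf1_{\{|x|>\delta\}}|x|^{p-1}\sqrt{q(|x|^2)}(1+|y|)$. This is controlled in $L^{3/2}$ only through second moments of $\sqrt{q}|Y|$. Here is the main obstacle: $Z$ has only second moments, so Hölder with exponent $3/2$ fails.

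I would first try Hölder with exponents $(2,2)$ instead of $(3/2,3)$. This gives $\Delta\sum_j \Pp(G_j)^{1/2}\le \Delta (N+1)^{1/2}\le (\ell+\Delta)^{1/2}\Delta^{1/2}$, using $\E[\1_{\{q_j<\rho_M\}}|C_j(r)|^2]\le C_{M,T}(1+\eta^{2(p-1)})\bar R_0^{2p}$. That bound follows from $\E Z\le C\bar R_0^2$ after bounding $|x|^{p-1}\le \delta^{p-1}$ for $p\le1$, and $|x|^{p-1}\le M^{p-1}$ for $p>1$ combined with $H_0^{p-1}$ scaling via \eqref{eq-3.126}. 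If the homogeneity in $\bar R_0$ is lost for $1<p<2$, I would accept a constant $C_{M,T}$ multiplying $\bar R_0^{p}$ by using $|X|\le M$. This is harmless since only a contraction on the stopped block is needed.

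With $\delta=\eta\bar R_0$ and $\eta,\Delta$ chosen as powers of $\eps$ and $\tau$ as before, this yields the stopped analogue of Lemma~\ref{LM-4.8}:
\[
\E[{\rm e}^{\lambda\sigma_\ell}V(X^\eps_{\sigma_\ell})]\le U_0+C_{M,T,\lambda}\Theta\,\bar R_0^p ,
\]
where $\Theta\to0$ as $\eps,\tau\to0$. The delay error is handled as in Lemma~\ref{LM-3.13} with $\sigma(0,y)=0$, giving the rate $\tau^{p/2}$. It also yields the exit bound $\Pp(\rho_M\le T)\le (U_0+C\Theta\bar R_0^p)/\mathfrak m(M)$.

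Finally, I would iterate with $\mathcal W_t:=U_t+\kappa Z_t^{p/2}$ on $\mathcal S_k=\{\rho_M>t_k\}$. Combining the stopped \eqref{eq-3.162}--\eqref{eq-3.163}, with $d_\eps(T)={\rm e}^{-\theta pT/(2\eps)}$, gives $\E[\1_{\mathcal S_{k+1}}\mathcal W_{t_{k+1}}\mid\cF_{t_k}]\le\frac12\1_{\mathcal S_k}\mathcal W_{t_k}$. This needs the parameters chosen in order: $M$ so that $4(c_{+,R_0}+1)R_0^p(1+R_0^p)/\mathfrak m(M)\le\nu$, noting $\mathcal W_0\le C\|\phi\|_\infty^p(1+|y|^p)$ since $q(S_0)\le CS_0$; then $T$, then $\kappa$, then $\eps_*,\tau_*$. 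The exit bound applied conditionally gives $\Pp(\mathcal S_k\setminus\mathcal S_{k+1}\mid\cF_{t_k})\le \frac{2}{\mathfrak m(M)}\1_{\mathcal S_k}\mathcal W_{t_k}$. Summing the geometric series then gives $\Pp(\rho_M<\infty)\le\nu$. On $\{\rho_M=\infty\}$, the stopped \eqref{eq-3.126} gives $\E[\1_{\mathcal S_\infty}\sup_{[t_k,t_{k+1}]}|X^\eps_t|^p]\le C2^{-k}\mathcal W_0$, and Borel--Cantelli gives the negative Lyapunov exponent, proving \eqref{eq-4.88}.
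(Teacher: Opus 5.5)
Your overall architecture is what the paper intends. It states Theorem~\ref{TH-4.14} without proof, as the Case~II counterpart of Theorem~\ref{TH-4.13}: stop at $\rho_M$, use stopped versions of Lemmas~\ref{LM-3.20}--\ref{LM-3.24}, add an exit correction as in \eqref{eq-4.41}--\eqref{eq-4.45}, and contract $U+\kappa Z^{p/2}$ on $\mathcal S_k$.

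\textbf{The gap: the exit correction for $1<p<2$.} With H\"older exponents $(2,2)$ you need
\[
\E\bigl[\1_{\{q_j<\rho_M\}}|X^\eps_{q_j}|^{2p-2}\,q(|X^\eps_{q_j}|^2)\,|\widehat Y^j_r|^2\bigr].
\]
The only fast-variable control you have is $\E Z\le C\bar R_0^2$, a first moment of $q|Y|^2$. This leaves no integrability to split off $|X^\eps_{q_j}|^{2p-2}$. So ``$H_0^{p-1}$ scaling via \eqref{eq-3.126}'' and ``$|x|^{p-1}\le M^{p-1}$'' cannot both be used; you get one or the other. The second yields $(\E[\1|C_j|^2])^{1/2}\le C_M\bar R_0$, not $C\bar R_0^p$.

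Your fallback does not repair this. First, $|X|\le M$ cannot turn $\bar R_0$ into $C_{M,T}\bar R_0^p$: the offending factor comes from the unbounded fast variable, and for $p>1$ we have $\bar R_0\gg\bar R_0^p$ as $\bar R_0\to0$. Second, the loss is not harmless. The block inequality
\[
\E[\1_{\mathcal S_{k+1}}\mathcal W_{t_{k+1}}\mid\cF_{t_k}]\le\tfrac12\1_{\mathcal S_k}\mathcal W_{t_k}
\]
must hold for arbitrarily small $\mathcal W_{t_k}$. An error of order $\Delta^{1/2}\mathcal W_{t_k}^{1/p}$ dominates $\mathcal W_{t_k}$ near zero. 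The recursion would then give at best practical stability, and the geometric series for the exit probabilities is lost.

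\textbf{The fix: $\Psi_\delta$ is bounded uniformly in $y$.} The obstacle you identified is actually absent in Case~II. For $|x|\le M$, \eqref{eq-4.1} with $z=0$ and \eqref{eq-4.8} give $|b(x,y)|+\normF{\sigma(x,y)}\le L_M|x|$ uniformly in $y$; this bound comes from these two, not from \eqref{eq-4.6}. Combined with \eqref{eq-4.12} and \eqref{eq-3.38}, this gives the local version of \eqref{eq-3.123}:
\[
|\Psi_\delta(x,y)|\le C_M|x|^p\1_{\{|x|>\delta\}},
\]
uniformly in $y$ and $\delta$. Expanding around a reference point via \eqref{eq-3.122} only reintroduces a spurious $|y|$. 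With the uniform bound, $|C_j(r)|\le C_M|X^\eps_{q_j}|^p$ on $G_j$, and H\"older gives
\[
\E[\1_{G_j}|C_j(r)|]\le C_M\bigl(\E|X^\eps_{q_j\wedge\rho_M}|^2\bigr)^{p/2}\Pp(G_j)^{1-p/2}\le C_{M,T}H_0^p\,\Pp(G_j)^{1-p/2}.
\]
Summing as in \eqref{eq-4.45} yields $C_{M,T}(\ell+\Delta)^{p/2}\Delta^{1-p/2}\bar R_0^p$. This is homogeneous, has no $\eta$-loss, and needs no fast moments.

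Your diagnosis was also inverted. The pair $(2,2)$ demands more integrability of $C_j$ than $(3/2,3)$, not less. Even with your $y$-dependent bound, putting the fast factor in $L^{3/2}$ leaves room to separate $|X^\eps_{q_j}|^{p-1}$ by H\"older, using moments of the stopped slow process. With the uniform bound in place, the rest of your argument goes through.
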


\section{Numerical examples}\label{SEC-5}

In this section, we illustrate the stability results obtained in Sections~\ref{SEC-3} and~\ref{SEC-4}.

\subsection{Two Exponential Stability Mechanisms}

\begin{figure}[htbp]
    \centering
    \includegraphics[width=0.9\textwidth]{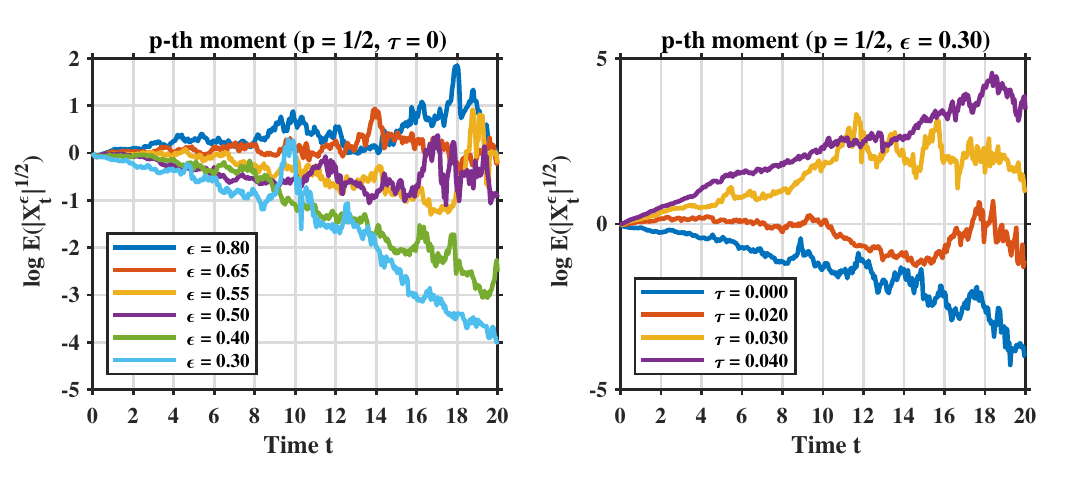}
    \caption{Numerical illustration of Eq.~\eqref{eq-3-CaseI}.
    (a) Logarithmic empirical $p$th moments for different values of $\eps$
    with $\tau=0$. (b) Logarithmic empirical $p$th moments for different
    values of $\tau$ with $\eps=0.3$.}
    \label{FIG-1}
\end{figure}

\begin{figure}[htbp]
    \centering
    \includegraphics[width=0.9\textwidth]{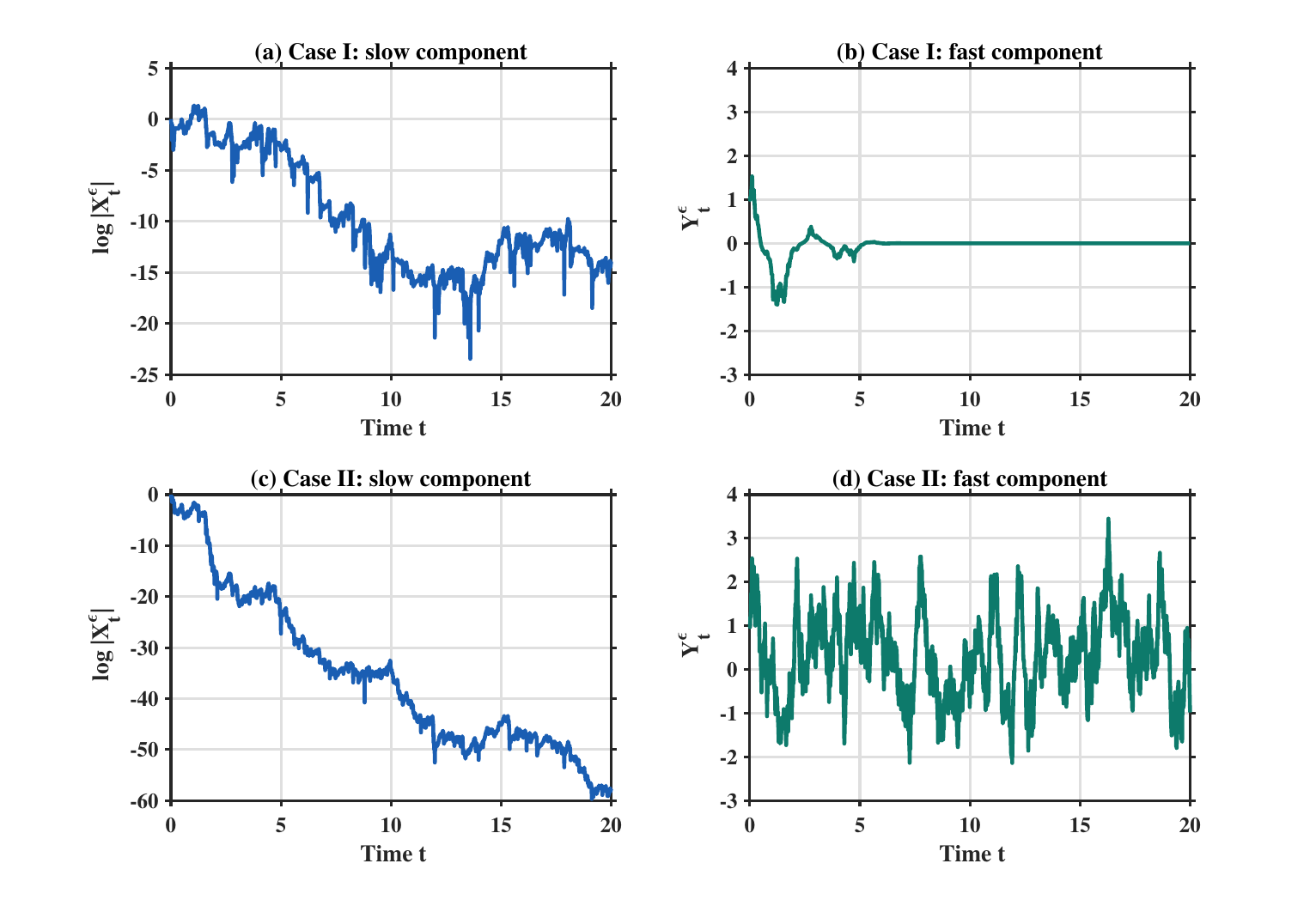}
    \caption{Comparison of the two stability mechanisms for
    $\eps=0.30$ and $\tau=0.02$.
    (a) $\log|X_t^\eps|$ in Case~I.
    (b) $Y_t^\eps$ in Case~I.
    (c) $\log|X_t^\eps|$ in Case~II.
    (d) $Y_t^\eps$ in Case~II.}
    \label{FIG-2}
\end{figure}

We consider the two systems \eqref{eq-3-CaseI}
and \eqref{eq-3-CaseII}, both with initial data
$X_t^\eps=1$ for $-\tau\le t\le0$ and $Y_0^\eps=1$.
As verified above, Theorems~\ref{TH-3.16}
and \ref{TH-3.25} apply to Case I and Case II,
respectively, with $p=1/2$. In Case II, the factor
$\tanh(x)$ ensures that the sensitivity of the slow
drift to the fast state vanishes near the origin.

We first examine the effects of $\eps$ and $\tau$
on the low-order moment stability of Case I.
Using Euler--Maruyama with step size $h=10^{-3}$,
we estimate the logarithmic empirical $p$th moments
from $N=5000$ independent trajectories on $[0,20]$.
Figure~\ref{FIG-1} shows that decreasing $\eps$
changes moment growth into decay when $\tau=0$.
For fixed $\eps=0.30$, moment decay persists for
small delays but weakens as $\tau$ increases,
with growth observed for larger delays.
These observations are consistent with
Theorem~\ref{TH-3.16}.

To illustrate the two stability mechanisms,
we simulate both systems using the tamed
Euler--Maruyama method with $h=10^{-3}$,
$\eps=0.30$, and $\tau=0.02$.
Figure~\ref{FIG-2} displays representative
sample paths on $[0,20]$.
In both cases, the slow component exhibits
exponential decay. In Case I, the fast component
also tends to zero
(Fig.~\ref{FIG-2}(a)--(b)), whereas in Case II
it continues to fluctuate
(Fig.~\ref{FIG-2}(c)--(d)).
This comparison illustrates the distinct
mechanisms underlying the exponential
stability results.

\subsection{High-Probability Stability under Local Lipschitz Conditions}
Consider the following ystem with initial data 
$Y_0^\eps=X_t^\eps\equiv 1,  t\in[-\tau,0]$.
\begin{small}\begin{equation}\label{eq-5.2}
\left\{
\begin{aligned}
{\rm d}X_t^\eps
&=\bigl[X_t^\eps-(X_t^\eps)^3\bigr]{\rm d}t
 +\bigl(2X_{t-\tau}^\eps+Y_t^\eps\bigr){\rm d}B_t^1,
 \qquad t\ge0,\\
{\rm d}Y_t^\eps
&=\frac1\eps\bigl(X_t^\eps-Y_t^\eps\bigr){\rm d}t
 +\frac1{2\sqrt\eps}Y_t^\eps\,{\rm d}B_t^2,
 \qquad t\ge0.
\end{aligned}
\right.
\end{equation}\end{small}
The coefficients are only locally Lipschitz, and Assumption~\ref{AS-4.1} and Case~I of Assumption~\ref{AS-4.2} hold with $L_M=3+3M^2$, $\vartheta_M=1$, $\lambda_{9,M}=3/2$, $c_{1,M}=1$ and $c_{2,M}=2$.
Taking $p=1/2$ and $V(x)=|x|^{1/2}$, Assumption~\ref{AS-4.3} holds with $\lambda_{11,M}=9/14$ and $\mathfrak m(M)=M^{1/2}\to\infty$. Hence Theorem~\ref{TH-4.13} applies.

With $h=10^{-3}$ and $\eps=0.30$, the sample paths are approximated by a tamed Euler--Maruyama scheme for the slow equation and Euler--Maruyama for the fast equation. Figure~\ref{FIG-3} illustrates the high-probability stability behavior of \eqref{eq-5.2}. The left panel shows five representative convergent sample paths for $\tau=0.02$ on a logarithmic scale. The right panel shows the empirical finite-horizon stability success probability, defined by $|X_T^\eps|<0.05$ at $T=200$ and estimated from $N=5000$ independent trajectories for each $\tau$. The success probability is high for small delays and decreases as $\tau$ increases, broadly in agreement with the small-delay stability conclusion of Theorem~\ref{TH-4.13}.

\begin{figure}[htbp]
    \centering
    \includegraphics[width=\textwidth]{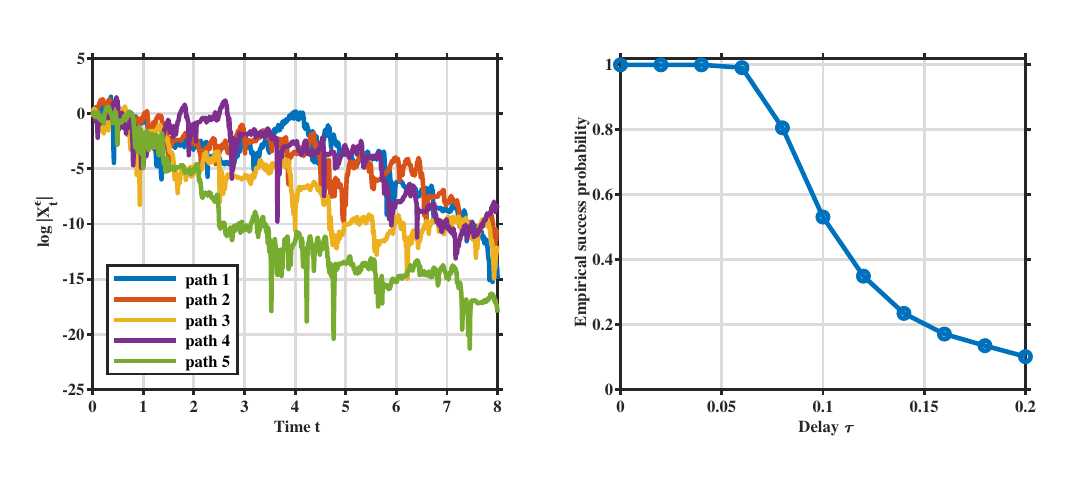}
    \caption{Numerical illustration of Eq.~\eqref{eq-5.2}.
    (a) Representative convergent sample paths of $X_t^\eps$ for
    $\eps=0.30$ and $\tau=0.02$. (b) Empirical stability success
    probabilities as a function of $\tau$ for $\eps=0.30$ and $T=200$,
    estimated from $N=5000$ independent trajectories for each
    value of $\tau$.}
    \label{FIG-3}
\end{figure}

\FloatBarrier
\section{Concluding Remarks}\label{SEC-6}
We establish low-order practical exponential stability
for slow--fast stochastic systems with small delays,
and further obtain moment and almost sure exponential
stability under two distinct mechanisms.
The results extend to locally Lipschitz systems,
yielding exponential stability with arbitrarily
high probability. Future work may address sharper
delay and time-scale bounds, weaker assumptions on
the fast dynamics, and more general delay structures.



\bibliographystyle{siamplain-nolinks}
\bibliography{References}

\end{document}